%\input{tcilatex}
% some new commands
%\usepackage[dvipsnames]{xcolor}
%\newcommand{\viccomment}[1]{}
%Method abbreviations
%implicit Euler
%implicit Euler with prefilter
%implicit Euler with pre and post filter
%\newcommand{\ieprepost}{IE-Pre-Post-3}
%d = (3 - \sqrt{3})/3
%error inhibiting scheme 3
%With one post filter, third order
%Pre and post filtered, better angle
%With one post filter, third order
%Pre and post filtered, better angle

\documentclass[12pt]{elsarticle}
\usepackage{eurosym}
%%%%%%%%%%%%%%%%%%%%%%%%%%%%%%%%%%%%%%%%%%%%%%%%%%%%%%%%%%%%%%%%%%%%%%%%%%%%%%%%%%%%%%%%%%%%%%%%%%%%%%%%%%%%%%%%%%%%%%%%%%%%%%%%%%%%%%%%%%%%%%%%%%%%%%%%%%%%%%%%%%%%%%%%%%%%%%%%%%%%%%%%%%%%%%%%%%%%%%%%%%%%%%%%%%%%%%%%%%%%%%%%%%%%%%
\usepackage{amsfonts}
\usepackage{amsmath}
\usepackage{mathtools}
\usepackage{subcaption}
\usepackage{xcolor}
\usepackage{array}
\usepackage{booktabs}
\usepackage{tikz}
\usepackage{pgfplots}
\usepackage{longtable}
\usepackage{soul}
\usepackage{lineno}
\usepackage[section]{placeins}
%\linenumbers

\setcounter{MaxMatrixCols}{10}
%TCIDATA{OutputFilter=LATEX.DLL}
%TCIDATA{Version=5.50.0.2960}
%TCIDATA{<META NAME="SaveForMode" CONTENT="1">}
%TCIDATA{BibliographyScheme=Manual}
%TCIDATA{Created=Wednesday, February 13, 2019 12:54:55}
%TCIDATA{LastRevised=Tuesday, September 15, 2020 15:19:38}
%TCIDATA{<META NAME="GraphicsSave" CONTENT="32">}
%TCIDATA{<META NAME="DocumentShell" CONTENT="Articles\SW\Elsevier Article">}
%TCIDATA{Language=American English}

\usepgfplotslibrary{external} 
\tikzexternalize

%Get rid of "preprint submitted to"
    \makeatletter
    \def\ps@pprintTitle{%
       \let\@oddhead\@empty
       \let\@evenhead\@empty
       \def\@oddfoot{\reset@font\hfil\thepage\hfil}
       \let\@evenfoot\@oddfoot
    }
    \makeatother

%\journal{Journal of Computational Physics}
\newtheorem{theorem}{Theorem}

\newtheorem{axiom}[theorem]{Axiom}

\newtheorem{conjecture}[theorem]{Conjecture}
\newtheorem{corollary}[theorem]{Corollary}

\newtheorem{definition}[theorem]{Definition}
\newtheorem{example}[theorem]{Example}
\newtheorem{exercise}[theorem]{Exercise}
\newtheorem{lemma}[theorem]{Lemma}

\newtheorem{proposition}[theorem]{Proposition}
\newtheorem{remark}[theorem]{Remark}

\newenvironment{proof}[1][Proof]{\noindent\textbf{#1.} }{\ \rule{0.5em}{0.5em}}

\newcolumntype{M}[1]{>{\centering\arraybackslash}m{#1}}

% Macros for Scientific Word 2.5 documents saved with the LaTeX filter.
%Copyright (C) 1994-95 TCI Software Research, Inc.
\typeout{TCILATEX Macros for Scientific Word 2.5 <22 Dec 95>.}
\typeout{NOTICE:  This macro file is NOT proprietary and may be 
freely copied and distributed.}
\makeatletter
%
%%%%%%%%%%%%%%%%%%%%%%
% macros for time
\newcount\@hour\newcount\@minute\chardef\@x10\chardef\@xv60
\def\tcitime{
\def\@time{%
  \@minute\time\@hour\@minute\divide\@hour\@xv
  \ifnum\@hour<\@x 0\fi\the\@hour:%
  \multiply\@hour\@xv\advance\@minute-\@hour
  \ifnum\@minute<\@x 0\fi\the\@minute
  }}%

%%%%%%%%%%%%%%%%%%%%%%
% macro for hyperref
\@ifundefined{hyperref}{}{}

% macro for external program call
\@ifundefined{qExtProgCall}{\def\qExtProgCall#1#2#3#4#5#6{\relax}}{}
%%%%%%%%%%%%%%%%%%%%%%
%
% macros for graphics
%
%
%
\def\QCTOpt[#1]#2{%
  \def\QCTOptB{#1}
  \def\QCTOptA{#2}
}
\def\QCTNOpt#1{%
  \def\QCTOptA{#1}
  \let\QCTOptB\empty
}
\def\Qct{%
  \@ifnextchar[{%
    \QCTOpt}{\QCTNOpt}
}
\def\QCBOpt[#1]#2{%
  \def\QCBOptB{#1}
  \def\QCBOptA{#2}
}
\def\QCBNOpt#1{%
  \def\QCBOptA{#1}
  \let\QCBOptB\empty
}
\def\Qcb{%
  \@ifnextchar[{%
    \QCBOpt}{\QCBNOpt}
}
\def\PrepCapArgs{%
  \ifx\QCBOptA\empty
    \ifx\QCTOptA\empty
      {}%
    \else
      \ifx\QCTOptB\empty
        {\QCTOptA}%
      \else
        [\QCTOptB]{\QCTOptA}%
      \fi
    \fi
  \else
    \ifx\QCBOptA\empty
      {}%
    \else
      \ifx\QCBOptB\empty
        {\QCBOptA}%
      \else
        [\QCBOptB]{\QCBOptA}%
      \fi
    \fi
  \fi
}
\newcount\GRAPHICSTYPE
%\GRAPHICSTYPE 0 is for TurboTeX
%\GRAPHICSTYPE 1 is for DVIWindo (PostScript)
%%%(removed)%\GRAPHICSTYPE 2 is for psfig (PostScript)
\GRAPHICSTYPE=\z@
\def\GRAPHICSPS#1{%
 \ifcase\GRAPHICSTYPE%\GRAPHICSTYPE=0
   \special{ps: #1}%
 \or%\GRAPHICSTYPE=1
   \special{language "PS", include "#1"}%
%%%\or%\GRAPHICSTYPE=2
%%%  #1%
 \fi
}%
%
%
%
% \graffile{ body }                                  %#1
%          { contentswidth (scalar)  }               %#2
%          { contentsheight (scalar) }               %#3
%          { vertical shift when in-line (scalar) }  %#4
\def\graffile#1#2#3#4{%
%%% \ifnum\GRAPHICSTYPE=\tw@
%%%  %Following if using psfig
%%%  \@ifundefined{psfig}{\input psfig.tex}{}%
%%%  \psfig{file=#1, height=#3, width=#2}%
%%% \else
  %Following for all others
  % JCS - added BOXTHEFRAME, see below
    \leavevmode
    \raise -#4 \BOXTHEFRAME{%
        \hbox to #2{\raise #3\hbox to #2{\null #1\hfil}}}%
}%
%
% A box for drafts
\def\draftbox#1#2#3#4{%
 \leavevmode\raise -#4 \hbox{%
  \frame{\rlap{\protect\tiny #1}\hbox to #2%
   {\vrule height#3 width\z@ depth\z@\hfil}%
  }%
 }%
}%
\newcount\draft
\draft=\z@

\newif\ifwasdraft
\wasdraftfalse

%  \GRAPHIC{ body }                                  %#1
%          { draft name }                            %#2
%          { contentswidth (scalar)  }               %#3
%          { contentsheight (scalar) }               %#4
%          { vertical shift when in-line (scalar) }  %#5
\def\GRAPHIC#1#2#3#4#5{%
 \ifnum\draft=\@ne\draftbox{#2}{#3}{#4}{#5}%
  \else\graffile{#1}{#3}{#4}{#5}%
  \fi
 }%
\def\addtoLaTeXparams#1{%
    \edef\LaTeXparams{\LaTeXparams #1}}%
%
% JCS -  added a switch BoxFrame that can 
% be set by including X in the frame params.
% If set a box is drawn around the frame.

\newif\ifBoxFrame \BoxFramefalse
\newif\ifOverFrame \OverFramefalse
\newif\ifUnderFrame \UnderFramefalse

\def\BOXTHEFRAME#1{%
   \hbox{%
      \ifBoxFrame
         \frame{#1}%
      \else
         {#1}%
      \fi
   }%
}

\def\doFRAMEparams#1{\BoxFramefalse\OverFramefalse\UnderFramefalse\readFRAMEparams#1\end}%
\def\readFRAMEparams#1{%
 \ifx#1\end%
  \let\next=\relax
  \else
  \ifx#1i\dispkind=\z@\fi
  \ifx#1d\dispkind=\@ne\fi
  \ifx#1f\dispkind=\tw@\fi
  \ifx#1t\addtoLaTeXparams{t}\fi
  \ifx#1b\addtoLaTeXparams{b}\fi
  \ifx#1p\addtoLaTeXparams{p}\fi
  \ifx#1h\addtoLaTeXparams{h}\fi
  \ifx#1X\BoxFrametrue\fi
  \ifx#1O\OverFrametrue\fi
  \ifx#1U\UnderFrametrue\fi
  \ifx#1w
    \ifnum\draft=1\wasdrafttrue\else\wasdraftfalse\fi
    \draft=\@ne
  \fi
  \let\next=\readFRAMEparams
  \fi
 \next
 }%
%
%Macro for In-line graphics object
%   \IFRAME{ contentswidth (scalar)  }               %#1
%          { contentsheight (scalar) }               %#2
%          { vertical shift when in-line (scalar) }  %#3
%          { draft name }                            %#4
%          { body }                                  %#5
%          { caption}                                %#6

\def\IFRAME#1#2#3#4#5#6{%
      \bgroup
      \let\QCTOptA\empty
      \let\QCTOptB\empty
      \let\QCBOptA\empty
      \let\QCBOptB\empty
      #6%
      \parindent=0pt%
      \leftskip=0pt
      \rightskip=0pt
      \setbox0 = \hbox{\QCBOptA}%
      \@tempdima = #1\relax
      \ifOverFrame
          % Do this later
          \typeout{This is not implemented yet}%
          \show\HELP
      \else
         \ifdim\wd0>\@tempdima
            \advance\@tempdima by \@tempdima
            \ifdim\wd0 >\@tempdima
               \textwidth=\@tempdima
               \setbox1 =\vbox{%
                  \noindent\hbox to \@tempdima{\hfill\GRAPHIC{#5}{#4}{#1}{#2}{#3}\hfill}\\%
                  \noindent\hbox to \@tempdima{\parbox[b]{\@tempdima}{\QCBOptA}}%
               }%
               \wd1=\@tempdima
            \else
               \textwidth=\wd0
               \setbox1 =\vbox{%
                 \noindent\hbox to \wd0{\hfill\GRAPHIC{#5}{#4}{#1}{#2}{#3}\hfill}\\%
                 \noindent\hbox{\QCBOptA}%
               }%
               \wd1=\wd0
            \fi
         \else
            %\show\BBB
            \ifdim\wd0>0pt
              \hsize=\@tempdima
              \setbox1 =\vbox{%
                \unskip\GRAPHIC{#5}{#4}{#1}{#2}{0pt}%
                \break
                \unskip\hbox to \@tempdima{\hfill \QCBOptA\hfill}%
              }%
              \wd1=\@tempdima
           \else
              \hsize=\@tempdima
              \setbox1 =\vbox{%
                \unskip\GRAPHIC{#5}{#4}{#1}{#2}{0pt}%
              }%
              \wd1=\@tempdima
           \fi
         \fi
         \@tempdimb=\ht1
         \advance\@tempdimb by \dp1
         \advance\@tempdimb by -#2%
         \advance\@tempdimb by #3%
         \leavevmode
         \raise -\@tempdimb \hbox{\box1}%
      \fi
      \egroup%
}%
%
%Macro for Display graphics object
%   \DFRAME{ contentswidth (scalar)  }               %#1
%          { contentsheight (scalar) }               %#2
%          { draft label }                           %#3
%          { name }                                  %#4
%          { caption}                                %#5
\def\DFRAME#1#2#3#4#5{%
 \begin{center}
     \let\QCTOptA\empty
     \let\QCTOptB\empty
     \let\QCBOptA\empty
     \let\QCBOptB\empty
     \ifOverFrame 
        #5\QCTOptA\par
     \fi
     \GRAPHIC{#4}{#3}{#1}{#2}{\z@}
     \ifUnderFrame 
        \nobreak\par #5\QCBOptA
     \fi
 \end{center}%
 }%
%
%Macro for Floating graphic object
%   \FFRAME{ framedata f|i tbph x F|T }              %#1
%          { contentswidth (scalar)  }               %#2
%          { contentsheight (scalar) }               %#3
%          { caption }                               %#4
%          { label }                                 %#5
%          { draft name }                            %#6
%          { body }                                  %#7
\def\FFRAME#1#2#3#4#5#6#7{%
 \begin{figure}[#1]%
  \let\QCTOptA\empty
  \let\QCTOptB\empty
  \let\QCBOptA\empty
  \let\QCBOptB\empty
  \ifOverFrame
    #4
    \ifx\QCTOptA\empty
    \else
      \ifx\QCTOptB\empty
        \caption{\QCTOptA}%
      \else
        \caption[\QCTOptB]{\QCTOptA}%
      \fi
    \fi
    \ifUnderFrame\else
      \label{#5}%
    \fi
  \else
    \UnderFrametrue%
  \fi
  \begin{center}\GRAPHIC{#7}{#6}{#2}{#3}{\z@}\end{center}%
  \ifUnderFrame
    #4
    \ifx\QCBOptA\empty
      \caption{}%
    \else
      \ifx\QCBOptB\empty
        \caption{\QCBOptA}%
      \else
        \caption[\QCBOptB]{\QCBOptA}%
      \fi
    \fi
    \label{#5}%
  \fi
  \end{figure}%
 }%
%
%
%    \FRAME{ framedata f|i tbph x F|T }              %#1
%          { contentswidth (scalar)  }               %#2
%          { contentsheight (scalar) }               %#3
%          { vertical shift when in-line (scalar) }  %#4
%          { caption }                               %#5
%          { label }                                 %#6
%          { name }                                  %#7
%          { body }                                  %#8
%
%    framedata is a string which can contain the following
%    characters: idftbphxFT
%    Their meaning is as follows:
%             i, d or f : in-line, display, or floating
%             t,b,p,h   : LaTeX floating placement options
%             x         : fit contents box to contents
%             F or T    : Figure or Table. 
%                         Later this can expand
%                         to a more general float class.
%
%
\newcount\dispkind%

\def\makeactives{
  \catcode`\"=\active
  \catcode`\;=\active
  \catcode`\:=\active
  \catcode`\'=\active
  \catcode`\~=\active
}
\bgroup
   \makeactives
   \gdef\activesoff{%
      \def"{\string"}
      \def;{\string;}
      \def:{\string:}
      \def'{\string'}
      \def~{\string~}
      %\bbl@deactivate{"}%
      %\bbl@deactivate{;}%
      %\bbl@deactivate{:}%
      %\bbl@deactivate{'}%
    }
\egroup

\def\FRAME#1#2#3#4#5#6#7#8{%
 \bgroup
 \@ifundefined{bbl@deactivate}{}{\activesoff}
 \ifnum\draft=\@ne
   \wasdrafttrue
 \else
   \wasdraftfalse%
 \fi
 \def\LaTeXparams{}%
 \dispkind=\z@
 \def\LaTeXparams{}%
 \doFRAMEparams{#1}%
 \ifnum\dispkind=\z@\IFRAME{#2}{#3}{#4}{#7}{#8}{#5}\else
  \ifnum\dispkind=\@ne\DFRAME{#2}{#3}{#7}{#8}{#5}\else
   \ifnum\dispkind=\tw@
    \edef\@tempa{\noexpand\FFRAME{\LaTeXparams}}%
    \@tempa{#2}{#3}{#5}{#6}{#7}{#8}%
    \fi
   \fi
  \fi
  \ifwasdraft\draft=1\else\draft=0\fi{}%
  \egroup
 }%
%
% This macro added to let SW gobble a parameter that
% should not be passed on and expanded. 

\def\TEXUX#1{"texux"}

%
% Macros for text attributes:
%
%
%
%
%%%%%%%%%%%%%%%%%%%%%%%%%%%%%%%%%%%%%%%%%%%%%%%%%%%%%%%%%%%%%%%%%%%%%%%%
%
%
% macros for user - defined functions
%
%

%
% miscellaneous 
%\long\def\QQQ#1#2{}%
\long\def\QQQ#1#2{%
     \long\expandafter\def\csname#1\endcsname{#2}}%
\@ifundefined{QTP}{\def\QTP#1{}}{}
\@ifundefined{QEXCLUDE}{\def\QEXCLUDE#1{}}{}
%\@ifundefined{Qcb}{\def\Qcb#1{#1}}{}
%\@ifundefined{Qct}{\def\Qct#1{#1}}{}
\@ifundefined{Qlb}{}{}
\@ifundefined{Qlt}{}{}
\long\def\QQA#1#2{}%
\def\QTR#1#2{{\csname#1\endcsname #2}}%(gp) Is this the best?
\def\EXPAND#1[#2]#3{}%
\def\NOEXPAND#1[#2]#3{}%
\def\LaTeXparent#1{}%
\def\ChildStyles#1{}%
\def\ChildDefaults#1{}%
\def\QTagDef#1#2#3{}%
%
% Macros for style editor docs
\@ifundefined{StyleEditBeginDoc}{}{}
%
% Macros for footnotes
\def\QQfnmark#1{\footnotemark}

%
% Macros for indexing.
\def\makeatletter\input gnuindex.sty\makeatother\makeindex{\makeatletter\input gnuindex.sty\makeatother\makeindex}%	
\@ifundefined{INDEX}{\def\INDEX#1#2{}{}}{}%
\@ifundefined{SUBINDEX}{\def\SUBINDEX#1#2#3{}{}{}}{}%
\@ifundefined{initial}%  
   {\def\initial#1{\bigbreak{\raggedright\large\bf #1}\kern 2\p@\penalty3000}}%
   {}%
\@ifundefined{entry}{}{}%
\@ifundefined{primary}{}{}%
\@ifundefined{secondary}{}{}%
\@ifundefined{ZZZ}{}{\makeatletter\input gnuindex.sty\makeatother\makeindex\makeatletter}%
%
% Attempts to avoid problems with other styles
\@ifundefined{abstract}{%
 \def\abstract{%
  \if@twocolumn
   \section*{Abstract (Not appropriate in this style!)}%
   \else \small 
   \begin{center}{\bf Abstract\vspace{-.5em}\vspace{\z@}}\end{center}%
   \quotation 
   \fi
  }%
 }{%
 }%
\@ifundefined{endabstract}{\def\endabstract
  {\if@twocolumn\else\endquotation\fi}}{}%
\@ifundefined{maketitle}{\def\maketitle#1{}}{}%
\@ifundefined{affiliation}{\def\affiliation#1{}}{}%
\@ifundefined{proof}{}{}%
\@ifundefined{endproof}{}{}%
\@ifundefined{newfield}{\def\newfield#1#2{}}{}%
\@ifundefined{chapter}{\def\chapter#1{\par(Chapter head:)#1\par }%
 \newcount\c@chapter}{}%
\@ifundefined{part}{\def\part#1{\par(Part head:)#1\par }}{}%
\@ifundefined{section}{\def\section#1{\par(Section head:)#1\par }}{}%
\@ifundefined{subsection}{\def\subsection#1%
 {\par(Subsection head:)#1\par }}{}%
\@ifundefined{subsubsection}{\def\subsubsection#1%
 {\par(Subsubsection head:)#1\par }}{}%
\@ifundefined{paragraph}{\def\paragraph#1%
 {\par(Subsubsubsection head:)#1\par }}{}%
\@ifundefined{subparagraph}{\def\subparagraph#1%
 {\par(Subsubsubsubsection head:)#1\par }}{}%
%%%%%%%%%%%%%%%%%%%%%%%%%%%%%%%%%%%%%%%%%%%%%%%%%%%%%%%%%%%%%%%%%%%%%%%%
% These symbols are not recognized by LaTeX
\@ifundefined{therefore}{}{}%
\@ifundefined{backepsilon}{}{}%
\@ifundefined{yen}{}{}%
\@ifundefined{registered}{%
   \def\registered{\relax\ifmmode{}\r@gistered
                    \else$\m@th\r@gistered$\fi}%
 \def\r@gistered{^{\ooalign
  {\hfil\raise.07ex\hbox{$\scriptstyle\rm\text{R}$}\hfil\crcr
  \mathhexbox20D}}}}{}%
\@ifundefined{Eth}{}{}%
\@ifundefined{eth}{}{}%
\@ifundefined{Thorn}{}{}%
\@ifundefined{thorn}{}{}%
% A macro to allow any symbol that requires math to appear in text
%
\@ifundefined{degree}{}{}%
%
% macros for T3TeX files
\newdimen\theight
\def\Column{%
 \vadjust{\setbox\z@=\hbox{\scriptsize\quad\quad tcol}%
  \theight=\ht\z@\advance\theight by \dp\z@\advance\theight by \lineskip
  \kern -\theight \vbox to \theight{%
   \rightline{\rlap{\box\z@}}%
   \vss
   }%
  }%
 }%
\def\qed{%
 \ifhmode\unskip\nobreak\fi\ifmmode\ifinner\else\hskip5\p@\fi\fi
 \hbox{\hskip5\p@\vrule width4\p@ height6\p@ depth1.5\p@\hskip\p@}%
 }%
\def\miss{\hbox{\vrule height2\p@ width 2\p@ depth\z@}}%
%\def\miss{\hbox{.}}%        %another possibility 
%
%           %always translated to \left| or \right|
%
\def\tcol#1{{\baselineskip=6\p@ \vcenter{#1}} \Column}  %
%
%                 %dummy entry in column 
%             %column entry
%               %column entry (not math)
%
%\newcount\notenumber
%\def\clearnotenumber{\notenumber=0}
%\def\note{\global\advance\notenumber by 1
% \footnote{$^{\the\notenumber}$}}
%\def\note{\global\advance\notenumber by 1
%
%
%

\def\newfmtname{LaTeX2e}
\def\chkcompat{%
   \if@compatibility
   \else
     \usepackage{latexsym}
   \fi
}

\ifx\fmtname\newfmtname
  \DeclareOldFontCommand{\rm}{\normalfont\rmfamily}{\mathrm}
  \DeclareOldFontCommand{\sf}{\normalfont\sffamily}{\mathsf}
  \DeclareOldFontCommand{\tt}{\normalfont\ttfamily}{\mathtt}
  \DeclareOldFontCommand{\bf}{\normalfont\bfseries}{\mathbf}
  \DeclareOldFontCommand{\it}{\normalfont\itshape}{\mathit}
  \DeclareOldFontCommand{\sl}{\normalfont\slshape}{\@nomath\sl}
  \DeclareOldFontCommand{\sc}{\normalfont\scshape}{\@nomath\sc}
  \chkcompat
\fi

%
% Greek bold macros
% Redefine all of the math symbols 
% which might be bolded	 - there are 
% probably others to add to this list

\def\alpha{\Greekmath 010B }%
\def\beta{\Greekmath 010C }%
\def\gamma{\Greekmath 010D }%
\def\delta{\Greekmath 010E }%
\def\epsilon{\Greekmath 010F }%
\def\zeta{\Greekmath 0110 }%
\def\eta{\Greekmath 0111 }%
\def\theta{\Greekmath 0112 }%
\def\iota{\Greekmath 0113 }%
\def\kappa{\Greekmath 0114 }%
\def\lambda{\Greekmath 0115 }%
\def\mu{\Greekmath 0116 }%
\def\nu{\Greekmath 0117 }%
\def\xi{\Greekmath 0118 }%
\def\pi{\Greekmath 0119 }%
\def\rho{\Greekmath 011A }%
\def\sigma{\Greekmath 011B }%
\def\tau{\Greekmath 011C }%
\def\upsilon{\Greekmath 011D }%
\def\phi{\Greekmath 011E }%
\def\chi{\Greekmath 011F }%
\def\psi{\Greekmath 0120 }%
\def\omega{\Greekmath 0121 }%
\def\varepsilon{\Greekmath 0122 }%
\def\vartheta{\Greekmath 0123 }%
\def\varpi{\Greekmath 0124 }%
\def\varrho{\Greekmath 0125 }%
\def\varsigma{\Greekmath 0126 }%
\def\varphi{\Greekmath 0127 }%

\def\nabla{\Greekmath 0272 }
\def\FindBoldGroup{%
   {\setbox0=\hbox{$\mathbf{x\global\edef\theboldgroup{\the\mathgroup}}$}}%
}

\def\Greekmath#1#2#3#4{%
    \if@compatibility
        \ifnum\mathgroup=\symbold
           \mathchoice{\mbox{\boldmath$\displaystyle\mathchar"#1#2#3#4$}}%
                      {\mbox{\boldmath$\textstyle\mathchar"#1#2#3#4$}}%
                      {\mbox{\boldmath$\scriptstyle\mathchar"#1#2#3#4$}}%
                      {\mbox{\boldmath$\scriptscriptstyle\mathchar"#1#2#3#4$}}%
        \else
           \mathchar"#1#2#3#4% 
        \fi 
    \else 
        \FindBoldGroup
        \ifnum\mathgroup=\theboldgroup % For 2e
           \mathchoice{\mbox{\boldmath$\displaystyle\mathchar"#1#2#3#4$}}%
                      {\mbox{\boldmath$\textstyle\mathchar"#1#2#3#4$}}%
                      {\mbox{\boldmath$\scriptstyle\mathchar"#1#2#3#4$}}%
                      {\mbox{\boldmath$\scriptscriptstyle\mathchar"#1#2#3#4$}}%
        \else
           \mathchar"#1#2#3#4% 
        \fi     	    
	  \fi}

\newif\ifGreekBold  \GreekBoldfalse
\let\SAVEPBF=\pbf
\def\pbf{\GreekBoldtrue\SAVEPBF}%

\@ifundefined{theorem}{\newtheorem{theorem}{Theorem}}{}
\@ifundefined{lemma}{}{}
\@ifundefined{corollary}{}{}
\@ifundefined{conjecture}{}{}
\@ifundefined{proposition}{\newtheorem{proposition}[theorem]{Proposition}}{}
\@ifundefined{axiom}{}{}
\@ifundefined{remark}{\newtheorem{remark}{Remark}}{}
\@ifundefined{example}{}{}
\@ifundefined{exercise}{}{}
\@ifundefined{definition}{\newtheorem{definition}{Definition}}{}

\@ifundefined{mathletters}{%
  \newcounter{equationnumber}  
  \def\mathletters{%
     \addtocounter{equation}{1}
     \edef\@currentlabel{\theequation}%
     \setcounter{equationnumber}{\c@equation}
     \setcounter{equation}{0}%
     \edef\theequation{\@currentlabel\noexpand\alph{equation}}%
  }
  
}{}

%Logos
\@ifundefined{BibTeX}{%
    \def\BibTeX{{\rm B\kern-.05em{\sc i\kern-.025em b}\kern-.08em
                 T\kern-.1667em\lower.7ex\hbox{E}\kern-.125emX}}}{}%
\@ifundefined{AmS}%
    {\def\AmS{{\protect\usefont{OMS}{cmsy}{m}{n}%
                A\kern-.1667em\lower.5ex\hbox{M}\kern-.125emS}}}{}%
\@ifundefined{AmSTeX}{}{}%
%

%%%%%%%%%%%%%%%%%%%%%%%%%%%%%%%%%%%%%%%%%%%%%%%%%%%%%%%%%%%%%%%%%%%%%%%
% NOTE: The rest of this file is read only if amstex has not been
% loaded.  This section is used to define amstex constructs in the
% event they have not been defined.
%
%
\ifx\ds@amstex\relax
   \message{amstex already loaded}\makeatother % 2.09 compatability
\else
   \@ifpackageloaded{amstex}%
      {\message{amstex already loaded}\makeatother }
      {}
   \@ifpackageloaded{amsgen}%
      {\message{amsgen already loaded}\makeatother }
      {}
\fi
%%%%%%%%%%%%%%%%%%%%%%%%%%%%%%%%%%%%%%%%%%%%%%%%%%%%%%%%%%%%%%%%%%%%%%%%
%%
%
%
%  Macros to define some AMS LaTeX constructs when 
%  AMS LaTeX has not been loaded
% 
% These macros are copied from the AMS-TeX package for doing
% multiple integrals.
%
\let\DOTSI\relax
\def\RIfM@{\relax\ifmmode}%
\def\FN@{\futurelet\next}%
\newcount\intno@
\def\iint{\DOTSI\intno@\tw@\FN@\ints@}%
\def\iiint{\DOTSI\intno@\thr@@\FN@\ints@}%
\def\iiiint{\DOTSI\intno@4 \FN@\ints@}%
\def\idotsint{\DOTSI\intno@\z@\FN@\ints@}%
\def\ints@{\findlimits@\ints@@}%
\newif\iflimtoken@
\newif\iflimits@
\def\findlimits@{\limtoken@true\ifx\next\limits\limits@true
 \else\ifx\next\nolimits\limits@false\else
 \limtoken@false\ifx\ilimits@\nolimits\limits@false\else
 \ifinner\limits@false\else\limits@true\fi\fi\fi\fi}%
\def\multint@{\int\ifnum\intno@=\z@\intdots@                          %1
 \else\intkern@\fi                                                    %2
 \ifnum\intno@>\tw@\int\intkern@\fi                                   %3
 \ifnum\intno@>\thr@@\int\intkern@\fi                                 %4
 \int}%                                                               %5
\def\multintlimits@{\intop\ifnum\intno@=\z@\intdots@\else\intkern@\fi
 \ifnum\intno@>\tw@\intop\intkern@\fi
 \ifnum\intno@>\thr@@\intop\intkern@\fi\intop}%
\def\intic@{%
    \mathchoice{\hskip.5em}{\hskip.4em}{\hskip.4em}{\hskip.4em}}%
\def\negintic@{\mathchoice
 {\hskip-.5em}{\hskip-.4em}{\hskip-.4em}{\hskip-.4em}}%
\def\ints@@{\iflimtoken@                                              %1
 \def\ints@@@{\iflimits@\negintic@
   \mathop{\intic@\multintlimits@}\limits                             %2
  \else\multint@\nolimits\fi                                          %3
  \eat@}%                                                             %4
 \else                                                                %5
 \def\ints@@@{\iflimits@\negintic@
  \mathop{\intic@\multintlimits@}\limits\else
  \multint@\nolimits\fi}\fi\ints@@@}%
\def\intkern@{\mathchoice{\!\!\!}{\!\!}{\!\!}{\!\!}}%
\def\plaincdots@{\mathinner{\cdotp\cdotp\cdotp}}%
\def\intdots@{\mathchoice{\plaincdots@}%
 {{\cdotp}\mkern1.5mu{\cdotp}\mkern1.5mu{\cdotp}}%
 {{\cdotp}\mkern1mu{\cdotp}\mkern1mu{\cdotp}}%
 {{\cdotp}\mkern1mu{\cdotp}\mkern1mu{\cdotp}}}%
%
%
%  These macros are for doing the AMS \text{} construct
%
\def\RIfM@{\relax\protect\ifmmode}
\def\text{\RIfM@\expandafter\text@\else\expandafter\mbox\fi}
\let\nfss@text\text
\def\text@#1{\mathchoice
   {\textdef@\displaystyle\f@size{#1}}%
   {\textdef@\textstyle\tf@size{\firstchoice@false #1}}%
   {\textdef@\textstyle\sf@size{\firstchoice@false #1}}%
   {\textdef@\textstyle \ssf@size{\firstchoice@false #1}}%
   \glb@settings}

\def\textdef@#1#2#3{\hbox{{%
                    \everymath{#1}%
                    \let\f@size#2\selectfont
                    #3}}}
\newif\iffirstchoice@
\firstchoice@true
%
%    Old Scheme for \text
%
%\def\rmfam{\z@}%
%\newif\iffirstchoice@
%\firstchoice@true
%\def\textfonti{\the\textfont\@ne}%
%\def\textfontii{\the\textfont\tw@}%
%\def\text{\RIfM@\expandafter\text@\else\expandafter\text@@\fi}%
%\def\text@@#1{\leavevmode\hbox{#1}}%
%\def\text@#1{\mathchoice
% {\hbox{\everymath{\displaystyle}\def\textfonti{\the\textfont\@ne}%
%  \def\textfontii{\the\textfont\tw@}\textdef@@ T#1}}%
% {\hbox{\firstchoice@false
%  \everymath{\textstyle}\def\textfonti{\the\textfont\@ne}%
%  \def\textfontii{\the\textfont\tw@}\textdef@@ T#1}}%
% {\hbox{\firstchoice@false
%  \everymath{\scriptstyle}\def\textfonti{\the\scriptfont\@ne}%
%  \def\textfontii{\the\scriptfont\tw@}\textdef@@ S\rm#1}}%
% {\hbox{\firstchoice@false
%  \everymath{\scriptscriptstyle}\def\textfonti
%  {\the\scriptscriptfont\@ne}%
%  \def\textfontii{\the\scriptscriptfont\tw@}\textdef@@ s\rm#1}}}%
%\def\textdef@@#1{\textdef@#1\rm\textdef@#1\bf\textdef@#1\sl
%    \textdef@#1\it}%
%\def\DN@{\def\next@}%
%\def\eat@#1{}%
%\def\textdef@#1#2{%
% \DN@{\csname\expandafter\eat@\string#2fam\endcsname}%
% \if S#1\edef#2{\the\scriptfont\next@\relax}%
% \else\if s#1\edef#2{\the\scriptscriptfont\next@\relax}%
% \else\edef#2{\the\textfont\next@\relax}\fi\fi}%
%
%
%These are the AMS constructs for multiline limits.
%
\def\Let@{\relax\iffalse{\fi\let\\=\cr\iffalse}\fi}%
\def\vspace@{\def\vspace##1{\crcr\noalign{\vskip##1\relax}}}%
\def\multilimits@{\bgroup\vspace@\Let@
 \baselineskip\fontdimen10 \scriptfont\tw@
 \advance\baselineskip\fontdimen12 \scriptfont\tw@
 \lineskip\thr@@\fontdimen8 \scriptfont\thr@@
 \lineskiplimit\lineskip
 \vbox\bgroup\ialign\bgroup\hfil$\m@th\scriptstyle{##}$\hfil\crcr}%
\def\Sb{_\multilimits@}%
\def\endSb{\crcr\egroup\egroup\egroup}%
\def\Sp{^\multilimits@}%

%
%
%These are AMS constructs for horizontal arrows
%
\newdimen\ex@
\ex@.2326ex
\def\rightarrowfill@#1{$#1\m@th\mathord-\mkern-6mu\cleaders
 \hbox{$#1\mkern-2mu\mathord-\mkern-2mu$}\hfill
 \mkern-6mu\mathord\rightarrow$}%
\def\leftarrowfill@#1{$#1\m@th\mathord\leftarrow\mkern-6mu\cleaders
 \hbox{$#1\mkern-2mu\mathord-\mkern-2mu$}\hfill\mkern-6mu\mathord-$}%
\def\leftrightarrowfill@#1{$#1\m@th\mathord\leftarrow
\mkern-6mu\cleaders
 \hbox{$#1\mkern-2mu\mathord-\mkern-2mu$}\hfill
 \mkern-6mu\mathord\rightarrow$}%
\def\overrightarrow{\mathpalette\overrightarrow@}%
\def\overrightarrow@#1#2{\vbox{\ialign{##\crcr\rightarrowfill@#1\crcr
 \noalign{\kern-\ex@\nointerlineskip}$\m@th\hfil#1#2\hfil$\crcr}}}%

\def\overleftarrow{\mathpalette\overleftarrow@}%
\def\overleftarrow@#1#2{\vbox{\ialign{##\crcr\leftarrowfill@#1\crcr
 \noalign{\kern-\ex@\nointerlineskip}$\m@th\hfil#1#2\hfil$\crcr}}}%
\def\overleftrightarrow{\mathpalette\overleftrightarrow@}%
\def\overleftrightarrow@#1#2{\vbox{\ialign{##\crcr
   \leftrightarrowfill@#1\crcr
 \noalign{\kern-\ex@\nointerlineskip}$\m@th\hfil#1#2\hfil$\crcr}}}%
\def\underrightarrow{\mathpalette\underrightarrow@}%
\def\underrightarrow@#1#2{\vtop{\ialign{##\crcr$\m@th\hfil#1#2\hfil
  $\crcr\noalign{\nointerlineskip}\rightarrowfill@#1\crcr}}}%

\def\underleftarrow{\mathpalette\underleftarrow@}%
\def\underleftarrow@#1#2{\vtop{\ialign{##\crcr$\m@th\hfil#1#2\hfil
  $\crcr\noalign{\nointerlineskip}\leftarrowfill@#1\crcr}}}%
\def\underleftrightarrow{\mathpalette\underleftrightarrow@}%
\def\underleftrightarrow@#1#2{\vtop{\ialign{##\crcr$\m@th
  \hfil#1#2\hfil$\crcr
 \noalign{\nointerlineskip}\leftrightarrowfill@#1\crcr}}}%
%%%%%%%%%%%%%%%%%%%%%

% 94.0815 by Jon:

\def\qopnamewl@#1{\mathop{\operator@font#1}\nlimits@}
\let\nlimits@\displaylimits
\def\setboxz@h{\setbox\z@\hbox}

\def\varlim@#1#2{\mathop{\vtop{\ialign{##\crcr
 \hfil$#1\m@th\operator@font lim$\hfil\crcr
 \noalign{\nointerlineskip}#2#1\crcr
 \noalign{\nointerlineskip\kern-\ex@}\crcr}}}}

 \def\rightarrowfill@#1{\m@th\setboxz@h{$#1-$}\ht\z@\z@
  $#1\copy\z@\mkern-6mu\cleaders
  \hbox{$#1\mkern-2mu\box\z@\mkern-2mu$}\hfill
  \mkern-6mu\mathord\rightarrow$}
\def\leftarrowfill@#1{\m@th\setboxz@h{$#1-$}\ht\z@\z@
  $#1\mathord\leftarrow\mkern-6mu\cleaders
  \hbox{$#1\mkern-2mu\copy\z@\mkern-2mu$}\hfill
  \mkern-6mu\box\z@$}

\def\projlim{\qopnamewl@{proj\,lim}}
\def\injlim{\qopnamewl@{inj\,lim}}
\def\varinjlim{\mathpalette\varlim@\rightarrowfill@}
\def\varprojlim{\mathpalette\varlim@\leftarrowfill@}
\def\varliminf{\mathpalette\varliminf@{}}
\def\varliminf@#1{\mathop{\underline{\vrule\@depth.2\ex@\@width\z@
   \hbox{$#1\m@th\operator@font lim$}}}}
\def\varlimsup{\mathpalette\varlimsup@{}}
\def\varlimsup@#1{\mathop{\overline
  {\hbox{$#1\m@th\operator@font lim$}}}}

%
%%%%%%%%%%%%%%%%%%%%%%%%%%%%%%%%%%%%%%%%%%%%%%%%%%%%%%%%%%%%%%%%%%%%%
%
%
%
%
%
%
%
%
%
%
%
%
%
%
%
%
%
%
%
%
%
%
% Macros for text size operators:

%JCS - added braces and \mathop around \displaystyle\int, etc.
%
%
%
%
%
%
%
%
%
%
%
%
%
%
%
%
%
%
%
%
%
%Macros for display size operators:
%

%
%
%
%
%
%
%
%
%
%
%
%
%
%
%
%
%
%
%
%Companion to stackrel
%
%
%
% These are AMS environments that will be defined to
% be verbatims if amstex has not actually been 
% loaded
%
%
\begingroup \catcode `|=0 \catcode `[= 1
\catcode`]=2 \catcode `\{=12 \catcode `\}=12
\catcode`\\=12 
|gdef|@alignverbatim#1\end{align}[#1|end[align]]
|gdef|@salignverbatim#1\end{align*}[#1|end[align*]]

|gdef|@alignatverbatim#1\end{alignat}[#1|end[alignat]]
|gdef|@salignatverbatim#1\end{alignat*}[#1|end[alignat*]]

|gdef|@xalignatverbatim#1\end{xalignat}[#1|end[xalignat]]
|gdef|@sxalignatverbatim#1\end{xalignat*}[#1|end[xalignat*]]

|gdef|@gatherverbatim#1\end{gather}[#1|end[gather]]
|gdef|@sgatherverbatim#1\end{gather*}[#1|end[gather*]]

|gdef|@gatherverbatim#1\end{gather}[#1|end[gather]]
|gdef|@sgatherverbatim#1\end{gather*}[#1|end[gather*]]

|gdef|@multilineverbatim#1\end{multiline}[#1|end[multiline]]
|gdef|@smultilineverbatim#1\end{multiline*}[#1|end[multiline*]]

|gdef|@arraxverbatim#1\end{arrax}[#1|end[arrax]]
|gdef|@sarraxverbatim#1\end{arrax*}[#1|end[arrax*]]

|gdef|@tabulaxverbatim#1\end{tabulax}[#1|end[tabulax]]
|gdef|@stabulaxverbatim#1\end{tabulax*}[#1|end[tabulax*]]

|endgroup

\def\align{\@verbatim \frenchspacing\@vobeyspaces \@alignverbatim
You are using the "align" environment in a style in which it is not defined.}

\@namedef{align*}{\@verbatim\@salignverbatim
You are using the "align*" environment in a style in which it is not defined.}
\expandafter\let\csname endalign*\endcsname =\endtrivlist

\def\alignat{\@verbatim \frenchspacing\@vobeyspaces \@alignatverbatim
You are using the "alignat" environment in a style in which it is not defined.}

\@namedef{alignat*}{\@verbatim\@salignatverbatim
You are using the "alignat*" environment in a style in which it is not defined.}
\expandafter\let\csname endalignat*\endcsname =\endtrivlist

\def\xalignat{\@verbatim \frenchspacing\@vobeyspaces \@xalignatverbatim
You are using the "xalignat" environment in a style in which it is not defined.}

\@namedef{xalignat*}{\@verbatim\@sxalignatverbatim
You are using the "xalignat*" environment in a style in which it is not defined.}
\expandafter\let\csname endxalignat*\endcsname =\endtrivlist

\def\gather{\@verbatim \frenchspacing\@vobeyspaces \@gatherverbatim
You are using the "gather" environment in a style in which it is not defined.}

\@namedef{gather*}{\@verbatim\@sgatherverbatim
You are using the "gather*" environment in a style in which it is not defined.}
\expandafter\let\csname endgather*\endcsname =\endtrivlist

\def\multiline{\@verbatim \frenchspacing\@vobeyspaces \@multilineverbatim
You are using the "multiline" environment in a style in which it is not defined.}

\@namedef{multiline*}{\@verbatim\@smultilineverbatim
You are using the "multiline*" environment in a style in which it is not defined.}
\expandafter\let\csname endmultiline*\endcsname =\endtrivlist

\def\arrax{\@verbatim \frenchspacing\@vobeyspaces \@arraxverbatim
You are using a type of "array" construct that is only allowed in AmS-LaTeX.}

\def\tabulax{\@verbatim \frenchspacing\@vobeyspaces \@tabulaxverbatim
You are using a type of "tabular" construct that is only allowed in AmS-LaTeX.}

\@namedef{arrax*}{\@verbatim\@sarraxverbatim
You are using a type of "array*" construct that is only allowed in AmS-LaTeX.}
\expandafter\let\csname endarrax*\endcsname =\endtrivlist

\@namedef{tabulax*}{\@verbatim\@stabulaxverbatim
You are using a type of "tabular*" construct that is only allowed in AmS-LaTeX.}
\expandafter\let\csname endtabulax*\endcsname =\endtrivlist

% macro to simulate ams tag construct

% This macro is a fix to eqnarray
\def\@@eqncr{\let\@tempa\relax
    \ifcase\@eqcnt \def\@tempa{& & &}\or \def\@tempa{& &}%
      \else \def\@tempa{&}\fi
     \@tempa
     \if@eqnsw
        \iftag@
           \@taggnum
        \else
           \@eqnnum\stepcounter{equation}%
        \fi
     \fi
     \global\tag@false
     \global\@eqnswtrue
     \global\@eqcnt\z@\cr}

% This macro is a fix to the equation environment
 \def\endequation{%
     \ifmmode\ifinner % FLEQN hack
      \iftag@
        \addtocounter{equation}{-1} % undo the increment made in the begin part
        $\hfil
           \displaywidth\linewidth\@taggnum\egroup \endtrivlist
        \global\tag@false
        \global\@ignoretrue   
      \else
        $\hfil
           \displaywidth\linewidth\@eqnnum\egroup \endtrivlist
        \global\tag@false
        \global\@ignoretrue 
      \fi
     \else   
      \iftag@
        \addtocounter{equation}{-1} % undo the increment made in the begin part
        \eqno \hbox{\@taggnum}
        \global\tag@false%
        $$\global\@ignoretrue
      \else
        \eqno \hbox{\@eqnnum}% $$ BRACE MATCHING HACK
        $$\global\@ignoretrue
      \fi
     \fi\fi
 } 

 \newif\iftag@ \tag@false
 
 \def\tag{\@ifnextchar*{\@tagstar}{\@tag}}
 \def\@tag#1{%
     \global\tag@true
     \global\def\@taggnum{(#1)}}
 \def\@tagstar*#1{%
     \global\tag@true
     \global\def\@taggnum{#1}%  
}

% Do not add anything to the end of this file.  
% The last section of the file is loaded only if 
% amstex has not been.

\makeatother

\begin{document}

%TCIMACRO{\TeXButton{Begin frontmatter}{\begin{frontmatter}}}%
%BeginExpansion
\begin{frontmatter}%
%EndExpansion

\title{A general linear method approach to the 
design and optimization of efficient, accurate, 
and easily implemented time-stepping methods in CFD}%

\author[ornl]{Victor DeCaria\fnref{utb}\corref{cor1}}
\ead{decariavp@ornl.gov}
\author[umass]{Sigal Gottlieb}
\ead{sgottlieb@umassd.edu}
\author[ornl]{Zachary J. Grant\fnref{utb}}
\ead{grantzj@ornl.gov}
\author[pitt]{William J. Layton}
\ead{wjl@pitt.edu}

\address[ornl]{Oak Ridge National Laboratory, Computational and Applied Mathematics Group}
\address[umass]{Mathematics Department, University of Massachusetts Dartmouth}
\address[pitt]{Department of Mathematics, University of Pittsburgh}

\fntext[utb]{This manuscript has been authored by UT-Battelle, LLC, under contract DE-AC05-00OR22725 with the US Department of Energy (DOE). The US government retains and the publisher, by accepting the article for publication, acknowledges that the US government retains a nonexclusive, paid-up, irrevocable, worldwide license to publish or reproduce the published form of this manuscript, or allow others to do so, for US government purposes. DOE will provide public access to these results of federally sponsored research in accordance with the DOE Public Access Plan (http://energy.gov/downloads/doe-public-access-plan).}
%\fntext[afosr]{Partially supported by AFOSR Grant No. FA9550-18-1-0383.}
%\fntext[nsf]{Partially supported by NSF grant DMS1817542.}
%EndExpansion
\cortext[cor1]{Corresponding author}

\address{}%
%EndExpansion

\begin{abstract}%
%EndExpansion

In simulations of fluid motion time accuracy has proven to be elusive. 
We seek highly accurate methods with strong enough stability properties to 
deal with the richness of scales of many flows. 
These methods must also be easy to implement within
current complex, possibly legacy codes. 
Herein we develop, analyze and test new time stepping methods
addressing these two issues with the goal of \textit{accelerating the
development of time accurate methods addressing the needs of applications}.
The new methods are created by introducing  inexpensive pre-filtering and post-filtering steps to popular 
methods which have been  implemented and tested within existing codes.  
We show that pre-filtering and post-filtering a multistep or multi-stage method results in new 
methods which have both multiple steps and stages: these are general linear methods (GLMs).
We utilize the well studied properties of GLMs to understand the accuracy and stability 
of filtered method, and to design optimal new filters for popular time-stepping methods. 
We present several new embedded families of high accuracy methods with low cognitive
complexity and excellent stability properties. Numerical tests of the
methods are presented, including ones finding failure points of some
methods. Among the new methods presented is a 
novel pair of alternating filters for the Implicit Euler method
which induces a third order, A-stable, error inhibiting scheme which is shown to be
particularly effective.

\end{abstract}%

\begin{keyword} Navier-Stokes \sep general linear methods \sep time discretization \sep time filter
\end{keyword}
\end{frontmatter}%

\section{Introduction}

\label{Intro}

\label{introduction} There is significant  cognitive complexity required for understanding, implementing
and validating new methods in complex, possibly legacy, codes. This results in  a  
need for improved methods that can be easily implemented through small
modifications of  simpler and well-tested codes.
Herein, we develop high order timestepping methods with favorable stability
properties that can be implemented by adding a minimal code modification of
a few, O(2), extra lines to simple methods often used in legacy codes. 
The newly developed methods often do not require additional function evaluations
or  extra storage, as the variables are simply over-written. 
Some of these methods provide an embedded error estimator, have
natural extensions to variable timesteps and arise from a process, Section %
\ref{sec:optimization}, \ that is amenable to optimization with respect to
applications driven design criteria.

To motivate the rest of the paper and provide useful methods, we now give three
(constant $\Delta t$) examples of the new methods as time discretizations of
the incompressible Navier–Stokes equations (NSE) (the application used to test the methods
in Section \ref{sec:NumericalTests}). The NSE are 
\begin{equation}
u_{t}+u\cdot \nabla u-\nu \Delta u+\nabla p=f\text{ and }\nabla \cdot u=0.
\label{eq:NSE}
\end{equation}%
Here, $u$ is the velocity, $p$ is the pressure, $\nu $ is the kinematic
viscosity, and $f=f(x,t)$ is an external non-autonomous body force. The two
equations describe the conservation of momentum and mass, respectively. We
set $f=0$ for simplicity in these examples.

\textbf{Example 1:} Beginning with the usual fully implicit Euler (IE)
method, let $\Delta t$ denote the timestep and superscript the timestep
number. Suppressing the spatial discretization, the IE method for the
Navier-Stokes equations (NSE) is: % \label{eq:BE1} 
\begin{subequations}
\label{eq:BEstep}
\begin{equation}
\frac{u^{n+1}-u^{n}}{\Delta t}+u^{n+1}\cdot \nabla u^{n+1}-\nu \Delta
u^{n+1}+\nabla p^{n+1}=0\text{ and }\nabla \cdot u^{n+1}=0.
\end{equation}%
Adding two extra lines of code yields the following new method: 
\end{subequations}
\begin{equation*}
\begin{array}{lc}
\text{1) Pre-filter } & \tilde{u}^{n}=u^{n}-\frac{1}{2}\left(
u^{n}-2u^{n-1}+u^{n-2}\right) , \vspace{0.15in}\\ 
\text{2) IE Solve:} & \left\{ 
\begin{array}{c}
\frac{1}{\Delta t}u^{n+1}+u^{n+1}\cdot \nabla u^{n+1}-\nu \Delta
u^{n+1}+\nabla p^{n+1}=\frac{1}{\Delta t}\tilde{u}^{n}\text{ } \\ 
\text{ }\nabla \cdot u^{n+1}=0%
\end{array}%
\right. \vspace*{.2in}  \\ 
\vspace*{.1in}
\text{3) Post-filter } & u^{n+1}_{3rd} = u^{n+1}-\frac{5}{11}\left(
u^{n+1}-3u^{n}+3u^{n-1}-u^{n-2}\right) ,%
\end{array}%
\end{equation*}%
The method arising by stopping after Step 2 is second order accurate and
L-stable, Section \ref{sec:bepre}, and is referred to herein as IE-Pre-2.
The method after Step 3 is third order accurate and  $A(\alpha )$  stable
with $\alpha \simeq 71^{\circ }$, Section \ref{sec:beprepost}, and is
referred to as IE-Pre-Post-3. Thus the difference between the Steps 2 and 3
approximation is an estimator of the local truncation error\footnote{%
The pair of approximations allows implementation as a variable order method,
not developed herein. }. We stress that the pre and post-filters are
themselves comprehensible (not exotic) operations and reduce 
the discrete fluctuation of the numerical solution.

Steps 2 and 3 are time filters designed to have no effect on (some
realization of) smooth solution scales and damp (some realization of)
fluctuating scales. For example, for the prefilter $\tilde{u}^{n} = u^{n}-%
\frac{1}{2}\left( u^{n}-2u^{n-1}+u^{n-2}\right) $, if $u^{n}=a+bt^{n}$ then
the filter does not alter $u^{n}.$ Let $\kappa =u^{n}-2u^{n-1}+u^{n-2}$
denote the discrete curvature, then the filter also has the effect of
halving the discrete curvature: $\kappa _{post}=\frac{1}{2}\kappa_{pre}$. 
The post-filter is a higher order realization of this
process. The coefficient values, 1/2 and 5/11, are derived by applying the
order conditions in the general linear method induced by the pre and post
filtered method, Section 3.

\textbf{Example 2:} One equivalent realization\footnote{%
The next steps would be reorganized for a different implementation of the
method.} of the usual implicit midpoint (1-1 Pad\'{e}, Crank-Nicolson,
one-leg trapezoidal) method is: given $u^{n}$ 
\begin{subequations}
\label{eq:MPstep}
\begin{gather}
\left\{ 
\begin{array}{c}
\frac{1}{\frac{1}{2}\Delta t}u^{n+\frac{1}{2}}+u^{n+\frac{1}{2}}\cdot \nabla
u^{n+\frac{1}{2}}-\nu \Delta u^{n+\frac{1}{2}}+\nabla p^{n+\frac{1}{2}}=%
\frac{1}{\frac{1}{2}\Delta t}u^{n}\text{ ,} \\ 
\text{ }\nabla \cdot u^{n+\frac{1}{2}}=0,%
\end{array}%
\right.  
\vspace*{.1in} \\
\vspace*{.1in}
u^{n+1}=2u^{n+\frac{1}{2}}-u^{n}.
\end{gather}%
We abbreviate this as MP. The pre- and post-filtered methods in Section \ref{sec:imp_midpoint} 
require introducing temporary variables $\tilde{u}^{n},%
\tilde{u}^{n+1}$ (that can be overwritten each time step) and produces an
embedded triplet of approximations of 2nd, 3rd and 4th order $u_{2\text{nd}%
}^{n+1},u_{3\text{rd}}^{n+1},u_{4\text{th}}^{n+1}$. Naturally one must be
selected to be $u^{n+1}$. The method is 
\end{subequations}
\begin{equation*}
\begin{array}{lc}
\text{1) Pre-filter} & \tilde{u}^{n}=u^{n}+\frac{5}{6}\left( u^{n}-\frac{3}{2%
}u^{n-1}+\frac{3}{5}u^{n-2}-\frac{1}{10}u^{n-3}\right) 
\vspace{.15in} \\ 
\text{2) IM Solve} & \left\{ 
\begin{array}{c}
\frac{1}{\frac{1}{2}\Delta t}u^{n+\frac{1}{2}}+u^{n+\frac{1}{2}}\cdot \nabla
u^{n+\frac{1}{2}}-\nu \Delta u^{n+\frac{1}{2}}+\nabla p^{n+\frac{1}{2}}=%
\frac{1}{\frac{1}{2}\Delta t} \tilde{u}^{n},\text{ } \\ 
\text{ }\nabla \cdot u^{n+\frac{1}{2}}=0, \\ 
u^{n+1}=2u^{n+\frac{1}{2}}-\tilde{u}^{n}\text{ }%
\end{array}%
\right.
\end{array}
 \vspace{2mm} 
\end{equation*}%

\begin{equation*}
\begin{array}{lc}
\text{3) Post-filters} & u_{3\text{rd}}^{n+1}=\frac{1}{2}\left( u^{n+1}+%
\tilde{u}^{n}\right) \vspace{2mm} \\ 
& u_{2\text{nd}}^{n+1}=\frac{12}{11}u_{3\text{rd}}^{n+1}-\frac{7}{22}u^{n}+%
\frac{9}{22}u^{n-1}-\frac{5}{22}u^{n-2}+\frac{1}{22}u^{n-3},\vspace{2mm} \\ 
& u_{4\text{th}}^{n+1}=\frac{24}{25}u_{3\text{rd}}^{n+1}+\frac{4}{25}u^{n}-%
\frac{6}{25}u^{n-1}+\frac{4}{25}u^{n-2}-\frac{1}{25}u^{n-3}.%
\end{array}%
\end{equation*}%
The 2nd order approximation, MP-Pre-Post-2, is $A$-stable, and has the same
linear stability region as MP. The 3rd order approximation, MP-Pre-Post-3,
is $A(\alpha )$\ with $\alpha =79.4^{\circ }$\ while the fourth order
approximation, MP-Pre-Post-4, is $A(\alpha )$\ with $\alpha =70.64^{\circ }$%
, Section \ref{sec:imp_midpoint}. Their differences provide an estimator,
which makes MP-Pre-Post-2/3/4 potentially useful as the basis for a variable stepsize variable order (VSVO)
method. Other examples are provided building on BDF2 (Section \ref{sec:bdf2}%
) and Runge-Kutta methods (Section \ref{sec:rk22}). The above is the natural
implementation without requiring additional function evaluations.

\textbf{Example 3:} Looking at the pre- and post-filtering process as a general linear method (GLM)
also opens the door to creating methods that have the error inhibiting properties described in \cite{EISpp20}.
An example of this is the EIS method IE-EIS-3 which we will describe in \ref{sec:EISmethod}:
\begin{equation*}
\begin{array}{lc}
\text{1) Pre-filter} &  \tilde{u}^n =   \frac{23}{5} u^{n-\frac{1}{3}}  - 3  u^n - \frac{9}{5} \tilde{u}^{n-1} + \frac{6}{5} \hat{u}^{n-1}
\vspace*{.1in} \\
\text{2) IE Solve:} & \left\{ 
\begin{array}{c}
\frac{1}{\Delta t}u^{n+\frac{2}{3}}+u^{n+ \frac{2}{3} }\cdot \nabla u^{n+ \frac{2}{3}}-\nu \Delta
u^{n+\frac{2}{3}}+\nabla p^{n+\frac{2}{3}}=\frac{1}{\Delta t}\tilde{u}^{n}\text{ } \\ 
\text{ }\nabla \cdot u^{n+\frac{2}{3}}=0%
\end{array}%
\right. \vspace*{.1in}  \\ 
\text{3) Filter} & \hat{u^n} = \frac{5}{12} u^{n} -  \frac{1}{12} u^{n+\frac{2}{3}} -  \frac{5}{12}  \hat{u}^{n-1} +  \frac{13}{12} \tilde{u}^n
 \vspace*{.1in}  \\
\text{4) IE Solve:} & \left\{ 
\begin{array}{c}
\frac{1}{\Delta t}u^{n+1}+u^{n+1}\cdot \nabla u^{n+1}-\nu \Delta
u^{n+1}+\nabla p^{n+1}=\frac{1}{\Delta t}\hat{u}^{n}\text{ } \\ 
\text{ }\nabla \cdot u^{n+1}=0%
\end{array}%
\right.   \\ 
\end{array}
\end{equation*}
This method is third order and A-stable, and has the cost of two implicit Euler evaluations.

\textbf{\ Method design and analysis.} In the methods' derivation and
implementation, extra variables are introduced. In their analysis and design the
extra variables are condensed to obtain an equivalent single method. With
pre and post filtering, this equivalent method can be both multi-step and
multi-stage. Thus its optimization and analysis must be through the theory
of general linear methods, Section 2. Section 3 shows how this theory can be
used to design filters so that the induced method satisfies desired
optimality conditions. In Section \ref{sec:GLMdesign} we will show how to
rewrite time-filtering methods as general linear methods, and give some
examples of such time-filtered methods as GLMs. In Section \ref{optimization}
we will show how this formulation can be used to develop an optimization
code that is a powerful tool that allows us to find pre- and post- processed
methods with advantageous stability properties. We will demonstrate the
utility of this approach showing some methods that resulted from this
optimization code (Section \ref{newmethods}). \ Section 4 applies Section 2
and 3 to derive the  methods in the section above and several more based on design
criteria of stability, accuracy and an embedded algorithmic structure. These
new methods are embedded families of high order methods based on
combinations of pre- and post-filtering for the most commonly used methods
for time discretization of incompressible flows, including the fully
implicit Euler method, the midpoint rule, and the BDF2 method. (We stress
however that the theory can be applied to a wide variety of other
applications driven design criteria.) The tools developed in Sections 2 and
3 thus show \textit{how to accelerate the development of time accurate
methods addressing the needs of applications}. Finally, we study the
performance of some of these methods in Section \ref{numerical}. The higher
Reynolds flow problems in Section \ref{sec:NumericalTests} were selected
because they are nonlinearity dominated and have solutions rich in scales, so accuracy is difficult and 
stability is essential. 

\subsection{Related work}

Time filters are widely used to stabilize leapfrog time discretizations of
atmosphere models, e.g., \cite{R69}, \cite{A72}, \cite{W09}. Our study of this
important work, in particular its stress on balancing computational, space
and cognitive complexities, was critical for the development path herein. In 
\cite{GL18} it was shown a well calibrated post-filter can increase accuracy
in the fully implicit method to second order, preserve A-stability,
anti-diffuse the approximation and yield an error estimator useful for time
adaptivity. The new methods and analysis in \cite{GL18} for $y^{\prime}=f(t,y)$
were extended to the Navier-Stokes equations in \cite{DLZ18}. In 
\cite{DGLL18} post-filters were studied starting with BDF3 (yielding an
embedded family of orders 2,3,4). It was also proven that post-filtering
alone has an accuracy barrier: improvement by at most one power of $\Delta t$
is possible from any sequence of linear post-filters. The idea of adding a
prefilter step herein is simple in principle (though technically intricate
in analysis and design, Sections 2,3) and overcomes this accuracy barrier.

There is a significant body of detailed and technically intricate stability
and convergence analysis for CFD problems of, mostly simpler (e.g., IE, IM,
BDF2) and mostly constant timestep methods, including \cite{GR79}, 
\cite{Crouzeix1976}, \cite{BDK82}, \cite{JMRT17}, \cite{E04a}, \cite{E04b}, \cite{J15}. 
Important work on adaptive timestepping for similar problems occurs
in \cite{KGGS10}, \cite{VV13}, \cite{HEPG15}, \cite{JR10}. The embedded
structure of the new method families herein suggest their further
development into adaptive methods.

\section{Examples: Time Filters induce General Linear Methods (GLMs)}

\label{Motivation}

Writing the method induced by adding pre- and post-filters in the form of a
GLM allows us to apply the order conditions and stability theory of GLMs to
optimize the method. To illustrate how a GLM is induced, for 
\begin{equation*}
u_{t}=F(u)
\end{equation*}%
we consider the pre- and post-filtered implicit Euler method (Example 1 in  Section
1) which can be written in the form
\begin{subequations}
\label{BEprepost3}
\begin{eqnarray}
y^{(1)} &= &u^{n}-\frac{1}{2}\left( u^{n}-2u^{n-1}+u^{n-2}\right) \\
y^{(2)} &=& y^{(1)} +\Delta  tF\left( y^{(2)} \right) \\
u^{n+1} &=& y^{(2)}  - \frac{5}{11}\left( y^{(2)} -3u^{n}+3u^{n-1}-u^{n-2}\right)
\end{eqnarray}

GLMs represent any combination of multistep and multistage methods. A GLM
with $s$ stages and $k$ steps is 
\end{subequations}
\begin{subequations}
\label{eq:mrktypeII}
\begin{align}
y^{(i)}& =\sum_{\ell =1}^{k}d_{i\ell }u^{n-k+\ell }+\Delta t\sum_{\ell
=1}^{k-1}\hat{a}_{i\ell }F(u^{n-k+\ell })+\Delta
t\sum_{j=1}^{s}a_{ij}F(y^{(j)})\;\;\;\;1\leq i\leq s \\
u^{n+1}& =\sum_{\ell =1}^{k}\theta _{\ell }u^{n-k+\ell }+\Delta t\sum_{\ell
=1}^{k-1}\hat{b}_{\ell }F(u^{n-k+\ell })+\Delta
t\sum_{j=1}^{s}b_{j}F(y^{(j)}),
\end{align}%
where the $u^{n-k+l}$ denote the steps, while the $y_{j}^{n}$ are
intermediate stages used to compute the next solution value $u^{n+1}$. We
will refer to these coefficients more compactly by the matrices $D,A,\hat{A}$
given by 
\end{subequations}
\begin{equation*}
D_{i\ell }=d_{i\ell },\;\;\;\hat{A}_{i\ell }=\hat{a}_{i\ell },\;\;\;%
\mbox{and}\;\;\;A_{ij}={a}_{ij},
\end{equation*}%
and the vectors $\Theta ,\mathbf{b},\mathbf{\hat{b}}$ that are given by 
\begin{equation*}
\Theta _{\ell }=\theta _{\ell },\;\;\;\mathbf{\hat{b}}_{\ell }=\hat{b}_{\ell
},\;\;\;\mbox{and}\;\;\;\mathbf{b}_{j}={b}_{j}.
\end{equation*}

The method \eqref{BEprepost3} is  in the form \eqref{eq:mrktypeII} with
\begin{equation*}
D=\left( -\frac{1}{2},1,\frac{1}{2}\right) ,\;\;\;\theta =\left( \frac{2}{11}%
,-\frac{9}{11},\frac{18}{11}\right) ,\;\;\;\hat{A}=\left( 0,0,0\right)
,\;\;\;\hat{b}=\left( 0,0,0\right) ,\;\;\;
\end{equation*}%
and $A=1$, $b_{1}=\frac{6}{11}.$
Note that we can write this method as a pair of {\bf embedded second and third order methods}. 
Only pre-filtering the implicit Euler method yields the 
$\mathcal{O}(\Delta t^{2})$\ \ accurate\footnote{%
All accuracy statements are proven by checking the order conditions for GLMs
presented in Section 3.} method 
\begin{eqnarray*}
y^{(1)} &=&-\frac{1}{2}u^{n-2}+u^{n-1}+\frac{1}{2}u^{n} \\
u^{n+1}_{2nd} &=&y^{(1)}+\Delta tF\left( u^{n+1}_{2nd}  \right) .
\end{eqnarray*}%
Adding a final postprocessing line 
\begin{eqnarray}
u^{n+1}_{3rd} &=&\frac{5}{11} u^{n-2}-\frac{15}{11} u^{n-1}+\frac{15}{11} u^{n}+\frac{6}{11} u^{n+1}_{2nd} 
\end{eqnarray}
makes the approximation $\mathcal{O}(\Delta t^{3})$. This embedded structure
means that the difference between the velocities $u^{n+1}_{3rd}$ and $u^{n+1}_{2nd}$ can
be used as an error estimator. 

\textbf{A 1-parameter family of pre- and post-filtered methods.} Adding a
parameter to the pre- and post- filters allows optimization with respect to
accuracy, stability and error constant criteria in Section 3. For the
implicit Euler method the parametric family is 
\begin{subequations}
\label{BEprepost2_family}
\begin{eqnarray}
y^{(1)} &=&u^{n}-d\left( u^{n}-u^{n-1}\right) \\
y^{(2)} &=&y^{(1)}+\Delta tF\left( y^{(2)}\right) \\
u^{n+1} &=&y^{(2)}-\frac{1}{3-2d}\left\{ (1-2 d )y^{(2)}-2(1-d)u^{n}+u^{n-1}\right\} .
\end{eqnarray}%
This method can be written as a GLM by 
\end{subequations}
\begin{eqnarray*}
y^{(1)} &=&du^{n-1}+(1-d)u^{n} \\
y^{(2)} &=&y^{(1)}+\Delta tF\left( y^{(2)}\right) \\
u^{n+1} &=&\frac{2d-1}{3-2d}u^{n-1}+\frac{4(1-d)}{3-2d}u^{n}+\frac{2}{3-2d}\Delta tF(y^{(2)}).
\end{eqnarray*}%
which is of GLM form with the choices $A=1$, $b_{1}=\frac{2}{3-2d}$
and 
\begin{equation*}
D=\left( 
\begin{array}{cc}
d& 1-d\\ 
d& 1- d %
\end{array}%
\right) ,\;\;\;\theta =\left( \frac{2d-1}{3-2d},\frac{4(1-d)}{3-2d}\right) ,
\;\;\;\hat{A}=\left( 0,0\right) ,\;\;\;\hat{b}=\left(0,0\right) .\;\;\;
\end{equation*}

Clearly, it is possible to develop easily implemented methods with many free
parameters by pre- and post-filtering widely used methods. Next, in Section %
\ref{sec:GLMdesign}, we show that the induced GLM form can simplify,
automate and accelerate filter, and thus method design.

\section{Optimizing Time Filters using the GLM framework}
\label{sec:GLMdesign}

Section 2 illustrates that adding two lines of time filter code to commonly
used linear multistep methods can increase accuracy without significant
additional computational work. To design the filter required, it is
necessary to write the filtered methods in the GLM form \eqref{eq:mrktypeII}. 
This section develops the GLM\ form of filtered general GLMs, shows how to
use this in a method design process and generalizes the filtering process to
also include, previously computed, trend values (i.e. values of $F(y^{(j)})$).

%Subsection \ref{LMMasGLM} begins with the GLM form of pre- and post-filtered
%linear multistep methods. 
Subsection \ref{GLMasGLM} shows that pre- and
post-filtering \emph{any} GLM, whether a multistep, Runge--Kutta, or a
combination of these, induces another GLM whose properties depend on the
filter parameters in a precise way. Theorem \ref{filteredGLM} gives a
complete characterization of the coefficients of the filtered GLM that
results from pre- and/or post-filtering a core GLM method in terms of the
filter parameters and the coefficients of the core method. Subsection \ref%
{GLMbackground} shows how to determine the accuracy and stability properties
of the method so induced. The results in the first three subsections open
the possibility, developed in Subsection \ref{optimization}, of \textit{%
optimizing accuracy and stability properties of the filtered GLM over the
choices of the filter parameters}. The optimization algorithm in Subsection %
\ref{optimization} is then used to design methods with favorable stability
and accuracy properties. We will present the new methods found using this
optimization in Section \ref{newmethods}.

\subsection{Pre and post filtering a GLM}
\label{GLMasGLM}

In the section above, we showed how time-filtering a linear multistep method
can be written as a GLM. Filtering is also useful for multi-stage
(Runge--Kutta) methods and multistep multi-stage methods, GLMs. If the core
method is a linear multistep method as in \eqref{LMM} the section above then
we have one stage only and $k$ steps. If the core method is a Runge--Kutta
method then we have $k=1$ steps and $s$ stages. In more generality, we
consider here a core method with $s$ stages and $k$ steps, defined by the
coefficients $\check{d}_{i\ell },\hat{a}_{i\ell },a_{ij}$ for $i=2,...,s$, $%
j=2,...,s$ and $\ell =1,...,k$:

\begin{eqnarray}
y^{(1)} &=&u^{n}  \notag  \label{coreGLM} \\
y^{(i)} &=&\sum_{\ell =1}^{k}\check{d}_{i\ell }u^{n-k+\ell }+\Delta
t\sum_{\ell =1}^{k-1}\hat{a}_{i\ell }F(u^{n-k+\ell })+\Delta
t\sum_{j=1}^{s}a_{ij}F(y^{(j)}) \\
&&\hspace{1in}\mbox{for}\;\;\;2\leq i\leq s  \notag \\
u^{n+1} &=&y^{(s)}.  \notag
\end{eqnarray}%
We can write this in the form \eqref{eq:mrktypeII} where the final row
coefficients are the same as the prior row coefficients. In the\ last
theorem we limited the form of the post-filter to be \eqref{post} and not
include any function evaluations. However, the post filter does not have to
be limited to the form \eqref{post}, but in fact can have $\theta _{l},\hat{b%
}_{l},b_{j}$ as free parameters, chosen only to satisfy stability and
accuracy considerations. This does not impose much additional cost to the
method, as any function evaluations have already been computed by the final
stage.

The following theorem provides the relationship between the core method, the
filter parameters and the filtered method.

\begin{theorem}
\label{filteredGLM} If we filter a GLM of the form \eqref{coreGLM} we obtain
a GLM of the form \eqref{eq:mrktypeII} where the first stage is a pre-filter
is given by 
\begin{subequations}
\begin{equation}  \label{GLMpre}
y^{(1)}=\sum_{\ell=1}^{k} d_{1\ell}u^{n-k+\ell}
\end{equation}
and the final stage is a post filter given by 
\begin{equation}  \label{GLMpost}
u^{n+1} = \sum_{\ell=1}^{k} \theta_\ell u^{n-k+\ell} + \Delta
t\sum_{\ell=1}^{k-1} \hat{b}_{\ell} F(u^{n-k+\ell}) + \Delta t\sum_{j=1}^s
b_j F(y^{(j)}).
\end{equation}
The coefficients $d_{1\ell}$ of \eqref{GLMpre} are the pre-filter
coefficients, and the coefficients $\theta_\ell, \hat{b}_{\ell}, b_j $ in %
\eqref{GLMpost} are the post-filter coefficients. These do not depend on the
coefficients of the core method \eqref{coreGLM} and can be chosen freely,
subject only to order and stability constraints.

The middle stages $2\leq i\leq s$ of the filtered method have the form 
\end{subequations}
\begin{equation*}
y^{(i)} = \sum_{\ell=1}^{k} {d}_{i\ell} u^{n-k+\ell}+ \Delta t
\sum_{\ell=1}^{k-1} \hat{a}_{i\ell} F(u^{n-k+\ell})+\Delta t
\sum_{j=1}^{s}a_{ij} F(y^{(j)}) ,
\end{equation*}
where coefficients $a_{ij}$ and $\hat{a}_{il}$ are not impacted by the
filtering, and the coefficients ${d}_{i\ell}$ of the filtered methods are
related to the coefficients $\check{d}_{i\ell}$ of the core method %
\eqref{coreGLM} by 
\begin{subequations}
\begin{align}  \label{PPcoef}
d_{i\ell}& =\check{d}_{i1}d_{1\ell}+\check{d}_{i\ell}\;\;\;\mbox{for}\;\;
\ell=1,k-1 \\
d_{ik}& =\check{d}_{i1}d_{1k}.
\end{align}
\end{subequations}
\end{theorem}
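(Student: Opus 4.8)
The plan is to substitute the pre-filter definition \eqref{GLMpre} directly into the core-method recursion \eqref{coreGLM} and then collect terms. First I would set $y^{(1)} = \sum_{\ell=1}^{k} d_{1\ell} u^{n-k+\ell}$, which replaces the original first stage $y^{(1)} = u^{n}$ of the core method. Since the only thing in the core recursion that changes is the meaning of $y^{(1)}$, the middle stages $y^{(i)}$, $2 \le i \le s$, and the update $u^{n+1} = y^{(s)}$ are still computed by the same formulas, except that wherever $F(y^{(1)})$ or $y^{(1)}$ appears it must be re-expressed. Here I would use the fact that the core GLM has no $F(y^{(1)})$ coupling that differs from a linear multistep term — more precisely, the $a_{i1}$ column is handled exactly as in the unfiltered case, so $a_{ij}$ and $\hat a_{i\ell}$ are genuinely untouched; the only propagation of the pre-filter is through the \emph{explicit} appearance of $y^{(1)}$ inside the $\check d_{i1} y^{(1)}$ term, since in \eqref{coreGLM} the coefficient of $u^{n} = u^{n-k+k}$ in stage $i$ is $\check d_{ik}$, and $y^{(1)}$ itself equals $u^{n}$ in the core but now equals the filtered combination.

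Next I would carry out the collection of coefficients. In the core method, stage $i$ reads $y^{(i)} = \sum_{\ell=1}^{k} \check d_{i\ell} u^{n-k+\ell} + (\text{trend terms})$, and the term $\check d_{ik} u^{n}$ is exactly $\check d_{ik} y^{(1)}_{\text{core}}$. Replacing $y^{(1)}_{\text{core}} = u^{n}$ by the filtered $y^{(1)} = \sum_{\ell=1}^{k} d_{1\ell} u^{n-k+\ell}$ means I should think of the core stage as $y^{(i)} = \check d_{i1}\, y^{(1)} + \sum_{\ell=1}^{k-1} \check d_{i,\ell+1}\,(\text{shifted steps}) + \cdots$; here I need to be careful about the index bookkeeping, since the paper's convention writes the pre-filter coefficient of $u^{n}$ (the newest available step) with index... — one has to check whether the core's ``$\check d_{i1}$'' multiplies $u^{n-k+1}$ (oldest) or plays the role of the $y^{(1)}$-coefficient. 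Reading \eqref{PPcoef}, the intended reading is that $\check d_{i1}$ is the coefficient that multiplied the (core) first stage $y^{(1)}=u^{n}$ after re-indexing, so substituting gives $y^{(i)} = \check d_{i1}\sum_{\ell} d_{1\ell} u^{n-k+\ell} + \sum_{\ell} \check d_{i\ell}' u^{n-k+\ell} + \cdots$, and matching coefficients of $u^{n-k+\ell}$ yields $d_{i\ell} = \check d_{i1} d_{1\ell} + \check d_{i\ell}$ for the interior indices and $d_{ik} = \check d_{i1} d_{1k}$ for the last (because the core's own $u^{n}$ term has been entirely absorbed into the $y^{(1)}$ substitution, so there is no leftover $\check d_{ik}$ contribution). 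The post-filter claim is immediate: \eqref{GLMpost} is literally the general final row of \eqref{eq:mrktypeII}, and since the final row coefficients $\theta_\ell,\hat b_\ell, b_j$ are precisely the post-filter we are free to choose, there is nothing to prove beyond observing that replacing $u^{n+1}=y^{(s)}$ by \eqref{GLMpost} is exactly what ``post-filtering'' means and it does not alter any $a_{ij},\hat a_{i\ell}$ or the $d_{i\ell}$.

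The main obstacle I anticipate is getting the indexing convention exactly right: the excerpt is slightly terse about which step $\check d_{i1}$ multiplies and about the meaning of ``$\ell = 1, k-1$'' in \eqref{PPcoef} (presumably ``$\ell = 1,\dots,k-1$''). I would resolve this by fixing, once and for all, the identification $y^{(1)}_{\text{core}} = u^{n} = u^{n-k+k}$ and tracking how this single term splits under the pre-filter; everything else is linear bookkeeping. A secondary point to state explicitly is \emph{why} the trend coefficients $\hat a_{i\ell}$ and the stage-coupling $a_{ij}$ are unaffected: because the pre-filter introduces no new $F$-evaluations (it is a purely linear combination of already-available step values $u^{n-k+\ell}$), so no $\Delta t\, F(\cdot)$ terms are created or destroyed in the middle stages, and the substitution touches only the ``$D$'' part of the Butcher-type tableau. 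Once these conventions are pinned down, the proof is a one-line substitution followed by reading off coefficients, so I would keep it short and emphasize the bookkeeping rather than belabor the algebra.
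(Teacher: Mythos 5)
Your proposal is correct and follows essentially the same route as the paper's proof: substitute the pre-filtered $y^{(1)}=\sum_{\ell}d_{1\ell}u^{n-k+\ell}$ for $u^{n}$ in the middle stages, collect coefficients of each $u^{n-k+\ell}$ to obtain \eqref{PPcoef}, and observe that the post-filter is just the freely chosen final row of \eqref{eq:mrktypeII}. Your extra care with the index convention (that $\check d_{i1}$ is the coefficient of the stage $y^{(1)}$, so the core's $u^{n}$ contribution is wholly absorbed and no leftover term survives in $d_{ik}$) and the explicit remark that the pre-filter creates no new $F$-evaluations are both consistent with, and slightly more explicit than, what the paper writes.
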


\begin{proof}
Clearly, the pre-filter coefficients $d_{1\ell }$ in \eqref{GLMpre} can be
freely chosen, subject only to accuracy and stability considerations. We
then use $y^{(1)}$ instead of $u^{n}$ in all the middle stages of %
\eqref{coreGLM}: 
\begin{eqnarray*}
y^{(i)} &=&\tilde{d}_{i1}y^{(1)}+\sum_{l=1}^{k-1}\tilde{d}_{i\ell
}u^{n-k+\ell }+\Delta t\sum_{\ell =1}^{k-1}\hat{a}_{i\ell }F(u^{n-k+\ell
})+\Delta t\sum_{j=1}^{s}a_{ij}F(y^{(j)}) \\
&=&\tilde{d}_{i1}\sum_{\ell =1}^{k}d_{1\ell }u^{n-k+\ell }+\sum_{\ell
=1}^{k-1}\tilde{d}_{i\ell }u^{n-k+\ell }+ \\
&&\text{ \ \ \ \ }+\Delta t\sum_{\ell =1}^{k-1}\hat{a}_{i\ell }F(u^{n-k+\ell
})+\Delta t\sum_{j=1}^{s}a_{ij}F(y^{(j)}) \\
&=&\sum_{\ell =1}^{k}{d}_{i\ell }u^{n-k+\ell }+\Delta t\sum_{\ell =1}^{k-1}%
\hat{a}_{i\ell }F(u^{n-k+\ell })+\Delta t\sum_{j=1}^{s}a_{ij}F(y^{(j)}),
\end{eqnarray*}%
where the coefficients of the pre-processed scheme are related to the
original coefficients by: 
\begin{equation*}
d_{i\ell }=\tilde{d}_{i1}d_{1\ell }+\tilde{d}_{i\ell }\;\;\;\mbox{for}%
\;\;\ell =1,k-1,\;\;\;\mbox{and}\;\;\;d_{ik}=\tilde{d}_{i1}d_{1k}
\end{equation*}%
To post-process the method, we simply modify the final line \eqref{GLMpost}
by allowing the postprocessing coefficients $\theta _{\ell },\hat{b}_{\ell
},b_{j}$ to be chosen freely, subject only to order conditions and stability
considerations.
\end{proof}

\begin{remark}
By placing additional constraints on the post-filter coefficients, methods
may be derived where the function evaluations in \eqref{GLMpost} are
replaced by a linear combination of previously computed $u^{n-k+l}$ and
stages $y^{(j)}$. This may be desirable depending on the problem, computer
architecture, and availability of function evaluations from a possibly black
box solver.
\end{remark}

\subsubsection{Filtering a linear multistep method}
Consider a $k$-step linear multistep method: 
\begin{equation}
u^{n+1}=\sum_{\ell =1}^{k}\alpha _{\ell }u^{n-k+\ell }+\Delta t\sum_{\ell=1}^{k+1}\beta _{\ell }F(u^{n-k+\ell }).  \label{LMM}
\end{equation}%
We pre-filter (Eqn. \eqref{pre}) and post-filter (Eqn. \eqref{post} ) as
in Section \ref{Motivation}: 
\begin{subequations}
\label{filtered_multistep}
\begin{eqnarray}
\hat{u}^{n} &=&u^{n}-\frac{\alpha }{2}\sum_{\ell =1}^{k}\hat{d}%
_{l}u^{n-k+\ell }  \label{pre} \\
\hat{u}^{n+1} &=&\alpha _{k}\hat{u}^{n}+\sum_{\ell =1}^{k-1}\alpha
_{l}u^{n-k+\ell }+\Delta t\sum_{\ell =1}^{k-1}\beta _{\ell }F(u^{n-m+\ell })
\label{core} \\
&+&\Delta t\beta _{k}F(\hat{u}^{n})+\Delta t\beta _{k+1}F(\hat{u}^{n+1}) 
\notag \\
{u}^{n+1} &=&\hat{u}^{n+1}-\frac{\omega }{2}\sum_{\ell =1}^{k}\hat{q}_{\ell
}u^{n-k+\ell }.  \label{post}
\end{eqnarray}
\end{subequations}
Following Theorem \ref{filteredGLM}, the coefficients of the filtered methods
satisfy:
\begin{equation*}
d_{1k}=1-\frac{\alpha }{2}\hat{d}_{k}, \
d_{2k}=\alpha _{k}d_{1k} \,
 \; \; \mbox{and}\;\;\;
d_{1\ell }=-\frac{\alpha }{2}\hat{d}_{\ell },
d_{2\ell }=\alpha _{k}d_{1\ell}+\alpha _{l}\
\;\;\;\mbox{for $\ell <k$}.
\end{equation*}%
\begin{equation*}
\hat{a}_{2\ell }=\beta _{l}\;\;\mbox{for $\ell \leq k-1$}, \; \; \mbox{and} \; \;
a_{21}=\beta _{k}, \; a_{22}=\beta _{k+1}.
\end{equation*}%

Generally, the post-filtering coefficients $\theta _{\ell }$ and $\hat{b}_{\ell }$ for $\ell \leq k$, and $%
b_{j}$ for $j=1,2$ can be chosen freely subject only to order and stability
considerations. However, if we wish to limit our post-filtering to the form %
\eqref{post}, which does not include function evaluations, then we have 
\begin{equation}
\theta _{\ell }=d_{2l}-\frac{\omega }{2}\hat{q}_{\ell },\;\;\;\hat{b}_{l}=%
\hat{a}_{2\ell },\;\;\;\mbox{and}\;\;\;b_{j}=a_{2j}.
\end{equation}

Notice that in the pre-filter \eqref{pre}, for consistency $\sum \hat{d}_{l}u^{n-k+l}$ 
should annihilate polynomials up to a certain (non-zero) degree so that this quantity is related to a
discrete derivative. In particular, for $u^{n}\equiv 1$ this implies $%
\sum_{l=1}^{k}\hat{d}_{l}=0$. In general, the quantity \ $\sum_{l=1}^{k}\hat{%
d}_{l}u^{n-k+l}$ represents a discrete fluctuation. Pre-filtering acts to
reduce the discrete fluctuation of the numerical solution.

\begin{proposition}
If 
\begin{equation*}
0<\alpha \hat{d}_{k}<2,
\end{equation*}%
then the filter 
\begin{equation*}
{\LARGE \ } u_{pre}^{n}=u^{n}-\frac{\alpha }{2}\sum_{\ell=1}^{k}\hat{d}_{l}u^{n-k+l}
\end{equation*}%
strictly reduces without changing sign the discrete fluctuation $\sum \hat{d}_{\ell}u^{n-k+l}$%
\begin{equation*}
\left( \hat{d}_{k}u_{pre}^{n}+\sum_{l=1}^{k-1}\hat{d}_{l}u^{n-k+\ell}\right)
=\left( 1-\frac{\alpha \hat{d}_{k}}{2}\right) \left( \sum_{\ell=1}^{k}\hat{d}_{\ell}u^{n-k+\ell}\right)
\end{equation*}
\end{proposition}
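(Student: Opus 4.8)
The plan is to verify the displayed identity by a one-line substitution and then read off the sign-and-magnitude claim from the elementary equivalence $0<\alpha\hat{d}_k<2 \iff 0<1-\tfrac{\alpha\hat{d}_k}{2}<1$. So this is essentially a bookkeeping computation, not a theorem with a hard core.

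First I would make the indexing observation that does all the real work: in the discrete fluctuation $\sum_{\ell=1}^{k}\hat{d}_\ell u^{n-k+\ell}$ the summand with $\ell=k$ is exactly $\hat{d}_k u^{n}$ (since $n-k+\ell=n$ there), while every other summand involves $u^{n-k+1},\dots,u^{n-1}$, which the pre-filter leaves untouched. Consequently the discrete fluctuation \emph{of the pre-filtered sequence} — the same linear functional evaluated after $u^{n}$ has been replaced by $u_{pre}^{n}$ — is precisely $\hat{d}_k u_{pre}^{n}+\sum_{\ell=1}^{k-1}\hat{d}_\ell u^{n-k+\ell}$, which is the left-hand side of the claimed identity.

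Next I would substitute the definition $u_{pre}^{n}=u^{n}-\tfrac{\alpha}{2}\sum_{\ell=1}^{k}\hat{d}_\ell u^{n-k+\ell}$ into that expression. The term $\hat{d}_k u^{n}$ recombines with $\sum_{\ell=1}^{k-1}\hat{d}_\ell u^{n-k+\ell}$ to reconstitute the full sum $\sum_{\ell=1}^{k}\hat{d}_\ell u^{n-k+\ell}$, and the correction term contributes $-\tfrac{\alpha\hat{d}_k}{2}\sum_{\ell=1}^{k}\hat{d}_\ell u^{n-k+\ell}$; factoring out the common sum yields $\bigl(1-\tfrac{\alpha\hat{d}_k}{2}\bigr)\sum_{\ell=1}^{k}\hat{d}_\ell u^{n-k+\ell}$, the asserted identity. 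Finally, the hypothesis $0<\alpha\hat{d}_k<2$ is equivalent to $0<1-\tfrac{\alpha\hat{d}_k}{2}<1$, so the pre-filtered fluctuation equals the original fluctuation times a scalar strictly inside $(0,1)$; hence it has the same sign as the original, is nonzero whenever the original is nonzero, and has strictly smaller absolute value.

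I do not expect a genuine obstacle: the only point worth flagging is the first step, namely that "the discrete fluctuation of the filtered sequence" means replacing only the time-$n$ value in the functional; once that is noted, the rest is the algebraic identity above plus the interval estimate. Optionally one could phrase the conclusion as saying the filter acts on the one-dimensional space spanned by the fluctuation functional as multiplication by $1-\tfrac{\alpha\hat{d}_k}{2}\in(0,1)$, i.e.\ as a sign-preserving strict contraction, which is the viewpoint implicit in the preceding discussion.
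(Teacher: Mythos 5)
Your proof is correct and follows essentially the same route as the paper: substituting the definition of $u_{pre}^{n}$ into $\hat{d}_{k}u_{pre}^{n}+\sum_{\ell=1}^{k-1}\hat{d}_{\ell}u^{n-k+\ell}$ and recombining is exactly the paper's ``multiply by $\hat{d}_{k}$ and add the remaining terms'' computation. Your explicit closing remark that $0<\alpha\hat{d}_{k}<2$ is equivalent to the scaling factor lying in $(0,1)$ is a small but welcome addition, since the paper leaves that final inference implicit.
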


\begin{proof}
Multiply by $\hat{d}_{k}$ to obtain 
\begin{equation*}
\hat{d}_{k}u_{pre}^{n}=\hat{d}_{k}u^{n}-\frac{\alpha \hat{d}_{k}}{2}%
\sum_{\ell =1}^{k}\hat{d}_{\ell }u^{n-k+\ell }.
\end{equation*}%
Add $\sum_{\ell =1}^{k-1}\hat{d}_{\ell }u^{n-k+\ell }$ to both sides: 
\begin{eqnarray*}
\left( \hat{d}_{k}u_{pre}^{n}+\sum_{\ell =1}^{k-1}\hat{d}_{\ell }u^{n-k+\ell
}\right)  &=&\left( \hat{d}_{k}u^{n}+\sum_{\ell =1}^{k-1}\hat{d}_{\ell}u^{n-k+\ell }\right)  
- \frac{\alpha \hat{d}_{k}}{2}\left( \sum_{\ell =1}^{k}\hat{d}_{\ell }u^{n-k+\ell }\right)  \\
&=&\left( 1-\frac{\alpha \hat{d}_{k}}{2}\right) \left( \sum_{\ell =1}^{k}%
\hat{d}_{\ell }u^{n-k+\ell }\right) 
\end{eqnarray*}%
which establishes the result.
\end{proof}

\subsection{Background on General Linear Methods}

\label{GLMbackground}

To analyze the order and stability of \eqref{eq:mrktypeII} we convert to the
compact form 
\begin{subequations}
\label{eq:msrktypeI}
\begin{eqnarray}
y^{(i)} &=&u^{n-k+i}\quad \quad \mbox{for i =1 ..., k-1} \\
y^{(i)} &=&\sum_{\ell =1}^{k}{d}_{i\ell }u^{n-k+\ell }+\Delta t\sum_{j=1}^{k+s}%
\tilde{A}_{ij}F(y^{(j)})\quad \mbox{for i =k ..., k+s} \\
u^{n+1} &=&\sum_{\ell =1}^{k}\;\theta _{\ell }u^{n-k+\ell }+\Delta
t\sum_{j=1}^{k+s}\;\tilde{b}_{j}F(y^{(j)})
\end{eqnarray}%
where 
\end{subequations}
\begin{equation*}
\tilde{\mathbf{A}}=\left[ 
\begin{array}{cc}
\multicolumn{2}{c}{\mathbf{0}_{(k-1)\times (s+k-1)}} \\ 
\hat{A} & A \\ 
& 
\end{array}%
\right] ,\;\;\;\;\tilde{\mathbf{D}}=\left[ 
\begin{array}{c}
I_{(k-1)\times (k-1)} \; \; \mathbf{0}_{(k-1)\times (1)} \\ 
D \\ 
\end{array}%
\right] ,\;\;\;\;\mathbf{\tilde{b}}=\left[ \mathbf{\hat{b}}\;\;\;\;\;\mathbf{%
b}\right]
\end{equation*}%
are matrices of dimension $(s+k-1)\times (s+k-1)$, $(s+k-1)\times k$, and $%
1\times (s+k-1)$, respectively. This compact form is obtained by defining
the first few stages to be older steps. The consistency and stability
properties of GLMs are delineated in \cite%
{Butcher1966,ButcherGLM,HairerWanner1987}, reviewed next.

\begin{definition}
\label{OrderConditions} Let $\mathbf{e}$ denote a column vector of $1$'s,
and define the vectors 
\begin{equation*}
\ell =-\left[ k-1,k-2,\cdots ,1,0\right] ^{T}\;\;\;\;\mbox{and}\;\;\;\mathbf{%
c}=\mathbf{\tilde{A}}e_{s+k-1}+\mathbf{\tilde{D}}\ell .
\end{equation*}%
Below, let terms of the form $\mathbf{c}^{p},\ell ^{p}$ mean that each
element is raised to the power $p$, and terms of the form $\mathbf{c}\cdot 
\mathbf{\tilde{A}}\mathbf{c}$ mean element-wise multiplication of the vector 
$\mathbf{c}$ with the vector $\mathbf{\tilde{A}}\mathbf{c}$.

A $k$-step $s$-stage method of the form \eqref{eq:msrktypeI} is \textbf{%
consistent of order $p$} if the order conditions are satisfied 
\begin{equation*}
\tau _{q_{i}}=0,\;\;\;\forall q_{i}\;\;\mbox{such that}\;\;q\leq p.
\end{equation*}%
The values $\tau _{q_{i}}$ are given in \ref{app:OC}.
\end{definition}

Order conditions for GLMs are similar to those of Runge-Kutta methods, but
take into account how previous time levels provide additional information in
the form of the elementary differentials.

\begin{definition}
\label{GLMstability} A General Linear Method \eqref{eq:msrktypeI} has an 
  \textbf{evolution operator} 
\begin{equation*}
M(z)=\left[ 
\begin{array}{cc}
\mathbf{0} & I^{k-1} \\ 
\multicolumn{2}{c}{\Phi (z)}%
\end{array}%
\right]
\end{equation*}%
where  $\mathbf{0}$ is a vector of zeros of length $k-1$,and $I^{k-1}$ is a $%
(k-1)\times (k-1)$ identity matrix, and
\begin{equation*}
\Phi (z)=\Theta +z\mathbf{\tilde{b}}(I-z\mathbf{\tilde{A}})^{-1}\mathbf{\tilde{D}}.
\end{equation*}%
\end{definition}

\begin{definition} 
A GLM of the form \eqref{eq:msrktypeI} is linearly stable if and only if the roots
of $M$ satisfy the root condition: the eigenvalues $\lambda _{i}(M)$ are less
than or equal to one in magnitude, and that when a given eigenvalue has
magnitude one then it must have algebraic multiplicity equal to one.
\end{definition}

\subsection{Formulating the Optimization Problem\label{sec:optimization}}
\label{optimization} Building on the work of Ketcheson \cite{ketcheson},
this section formulates methods to optimize pre- and post-filters.
Optimization requires specifying the objective function to be optimized, the
inputs, the free parameters, and the equality and inequality constraints. We
begin with a core method with given coefficients, and aim to determine 
the coefficients of a pre-filter and post-filter so that the resulting order is of a specified value
$p$ and the $A(\alpha )$ stability region is maximized. 
The optimization problem is described in the following algorithm:\newline
\textbf{Optimization algorithm:} 

\begin{itemize}
\item \emph{The GLM} is defined by:
\begin{itemize}
\item \emph{Core method coefficients:}  The coefficients of the core method $\mathbf{A}, \mathbf{\hat{A}}, \mathbf{\tilde{D}}$.
\item  \emph{Free variables:}  The {\bf pre- and post-filter coefficients}, which are the coefficients in
$\mathbf{\theta}, \mathbf{b}, \mathbf{\hat{b}}$ and the first row of $\mathbf{D}$.
\item \emph{Computed coefficients:}  The coefficients in all but the first row of $\mathbf{D}$ are 
defined by  \eqref{PPcoef}, which depends on the core coefficients and the free variables. 
\end{itemize}
\item \emph{Select the free variables to maximize $R$ subject to conditions:}
\begin{enumerate}
\item \emph{(Inequality constraints:) The eigenvalues $\lambda _{j}$ of $%
M(z) $ defined in Definition \ref{GLMstability} satisfy 
\begin{equation*}
max_{j}\left\vert \lambda _{j}\right\vert <1
\end{equation*}%
for all $z=\left\vert z\right\vert \exp (i\theta )$ in the wedge defined by 
\begin{equation*}
0\leq |z|\leq \infty ,\;\;\;\mbox{and}\;\;\;\theta \in \left( \frac{\mu^{2}+1}{2\mu ^{2}+1}\pi ,\pi \right) \;\;\;\mbox{for}
\;\;\;0\leq \mu \leq r.
\end{equation*}%
(We enforce this condition for $0\leq |z|\leq 10^{4}$ in the optimization,
but then verify the results for larger values of z.) }

\item \emph{(Equality constraints:)} The order conditions in Definition \ref{OrderConditions} are satisfied to order $p$. 
\end{enumerate}
\end{itemize}
The $\alpha $ that corresponds to the value of $r$ resulting from this
optimization algorithm is given by $\alpha =180\left( 1-\frac{r^{2}+1}{2r^{2}+1}\right) $.

\bigskip

If the needs of an application requires optimization of a different type of
stability region (e.g. imaginary axis stability or real axis stability), we
replace the definition of $A(\alpha )$ with a different type of stability
region, such as:

\begin{enumerate}
\item \textbf{Imaginary axis stability:} For imaginary axis stability, we
require that the eigenvalues $\lambda_j$ of $M(z) $ satisfy $max_j \left|
\lambda_j \right| < 1 $ for all $z=i\nu$ where $0 \leq \nu \leq r$.

\item \textbf{Negative real axis stability:} For negative real axis
stability, we require that the eigenvalues $\lambda_j$ of $M(z) $ satisfy $%
max_j \left| \lambda_j \right| < 1 $ for all $z= - \nu$ where $0 \leq \nu
\leq r$.
\end{enumerate}

Of course, it is possible to optimize with respect to other properties as
well simply by adding these to the objective function or the constraints.

\section{Some new methods induced by time-filters}

\label{sec:NewMethods}

\label{newmethods} In the following sections we present some of the methods
we discussed above and some new methods that we obtained by optimization.

\subsection{Core method: Implicit Euler (IE)}
Consider the implicit Euler method as the starting point or core method: 
\begin{equation*}
u^{n+1}=u^{n}+\Delta t F(u^{n+1}).
\end{equation*}

\subsubsection{Second order method based on implicit Euler with 2-point
filters (IE-Filt($d$))\label{sec:dopt}}

The simplest possible pre-filter to the implicit Euler method is a 2 point
filter of the form: 
\begin{equation*}
y^{(1)} = u^{n}-d\left( u^{n}-u^{n-1}\right) \text{ for some }0<d<1%
\text{.}
\end{equation*}
If a 3-point post-filter is also added the resulting 1-parameter family of
second order methods, given in Eqn. \eqref{BEprepost2_family},
can be written as a GLM of the form: 
\begin{eqnarray*}
y^{(1)} &=&du^{n-1}+(1-d)u^{n} \\
y^{(2)} &=&y^{(1)}+\Delta t F\left( y^{(2)}\right) \\
u^{n+1} &=&\frac{2d-1}{3-2d}u^{n-1}+\frac{4(1-d)}{3-2d}u^{n} + \frac{2}{3-2d}
\Delta t F\left(y^{(2)}\right) .
\end{eqnarray*}
The case $d=0$ gives a method that is not pre-filtered at all, only
post-filtered. However, this does not impact the order of the scheme which
is $O(\Delta t^2)$. This pre-filter does not enhance the order of the
method, but it may serve to improve the error constants. 
The post-filter is designed to impact both the order enhancement and the stability
properties of the method.

When implementing this method, especially in a black-box setting, it is
easier to write it as 
\begin{eqnarray*}
y^{(1)} &=&du^{n-1}+(1-d)u^{n} \\
y^{(2)} &=&y^{(1)}+\Delta tF\left( y^{(2)}\right) \\
u^{n+1} &=& \frac{1}{3-2d} \left(2 y^{(2)} + 2(1-d) u^n - u^{n-1} \right)
\end{eqnarray*}

All values of $0 \leq d \leq 1$ give an A-stable second order
method. We also show that this method is energy-stable
for all values $0 \leq d \leq 1$ (see proof in Appendix).
 An interesting value is 
\begin{equation*}
d=\frac{3-\sqrt{3}}{3},
\end{equation*}
the resulting method is second order, but for linear problems we will see
third order convergence.

\subsubsection{Second order L-stable method based on implicit Euler with
3-point pre-filter (IE-Pre-2)\label{sec:bepre}}

We saw above that a 2-point pre-filter introduces a parameter that can then
be used to optimize some property of the scheme but does not increase order
of accuracy, while the post-filter allows the enhancement of accuracy. In
this section we show that 3-point pre-filter can be used to increase order
of accuracy while maintaining favorable stability properties.

Consider the 3-point pre-filter added to the core implicit Euler method: 
\begin{subequations}
\label{IE_pre3}
\begin{eqnarray}
y^{(1)} &=&-\frac{1}{2}u^{n-2}+u^{n-1}+\frac{1}{2}u^{n} \\
u^{n+1} &=& y^{(1)} +\Delta t F(u^{n+1}).
\end{eqnarray}%
This produces a second order approximation to the solution, and the method
is L-stable.

We can verify that this method is L-stable by analyzing the eigenvalues of
the incremental operator, $M(z)$, which advances the solutions to the next
time level i.e. 
\end{subequations}
\begin{equation*}
\begin{pmatrix}
u^{n-1} \\ 
u^{n} \\ 
u^{n+1}%
\end{pmatrix}%
=M(z)%
\begin{pmatrix}
u^{n-2} \\ 
u^{n-1} \\ 
u^{n}%
\end{pmatrix}%
\mbox{\quad where \quad}M(z)=%
\begin{bmatrix}
0 & 1 & 0 \\ 
0 & 0 & 1 \\ 
\frac{-1}{2(1-z)} & \frac{1}{1-z} & \frac{-1}{2(1-z)}%
\end{bmatrix}%
\end{equation*}%
When we take $\lim_{z\rightarrow -\infty }M(z)$, $M$ becomes upper
triangular so all its eigenvalues become zero. This shows that the second
order pre-filtered method \eqref{IE_pre3} is L-stable. The stability region
of this method is shown in Figure \ref{prepostIE} on the left.

\begin{figure}[tbp]
\includegraphics[scale=.35]{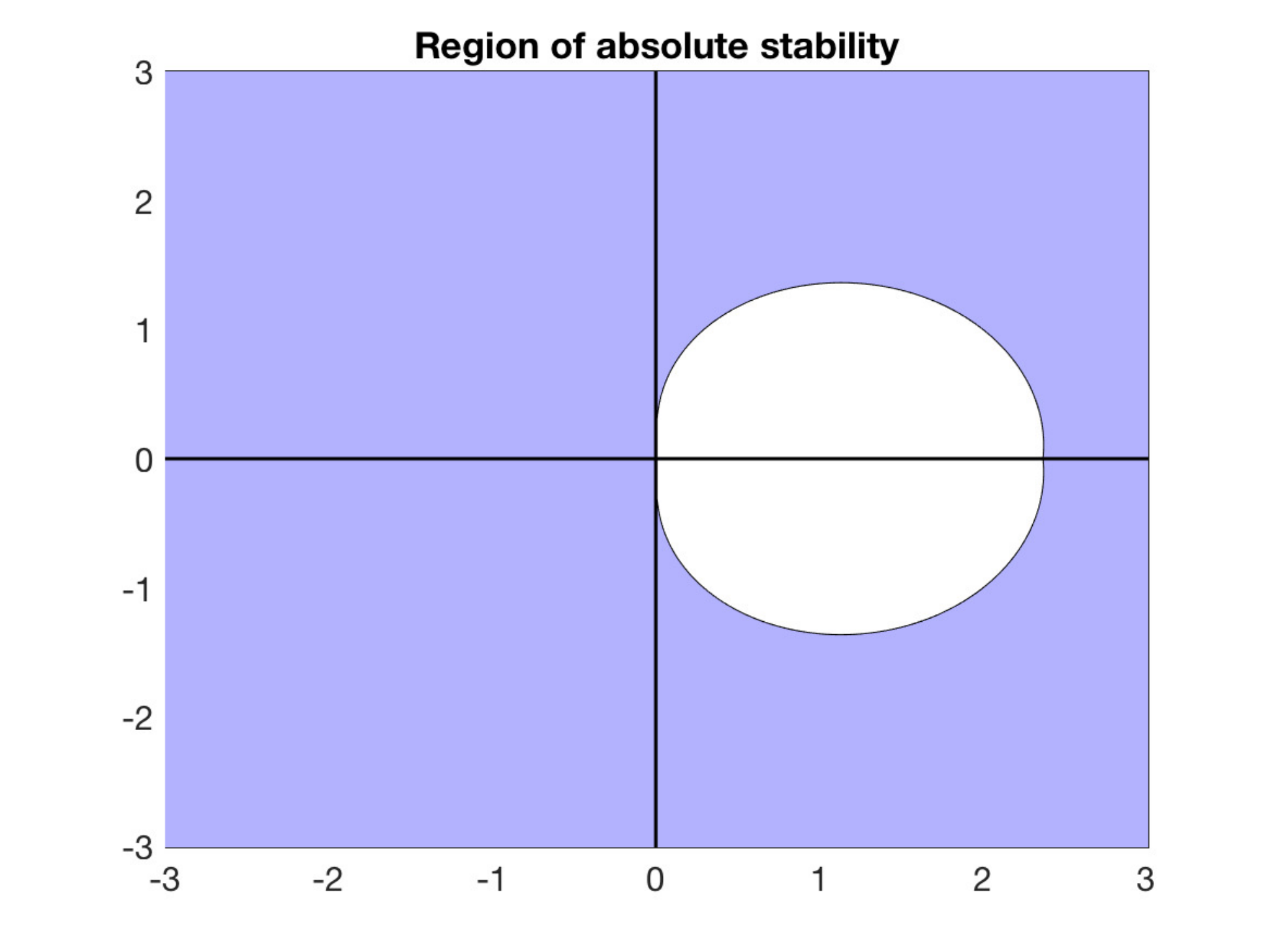}\;\; %
\includegraphics[scale=.35]{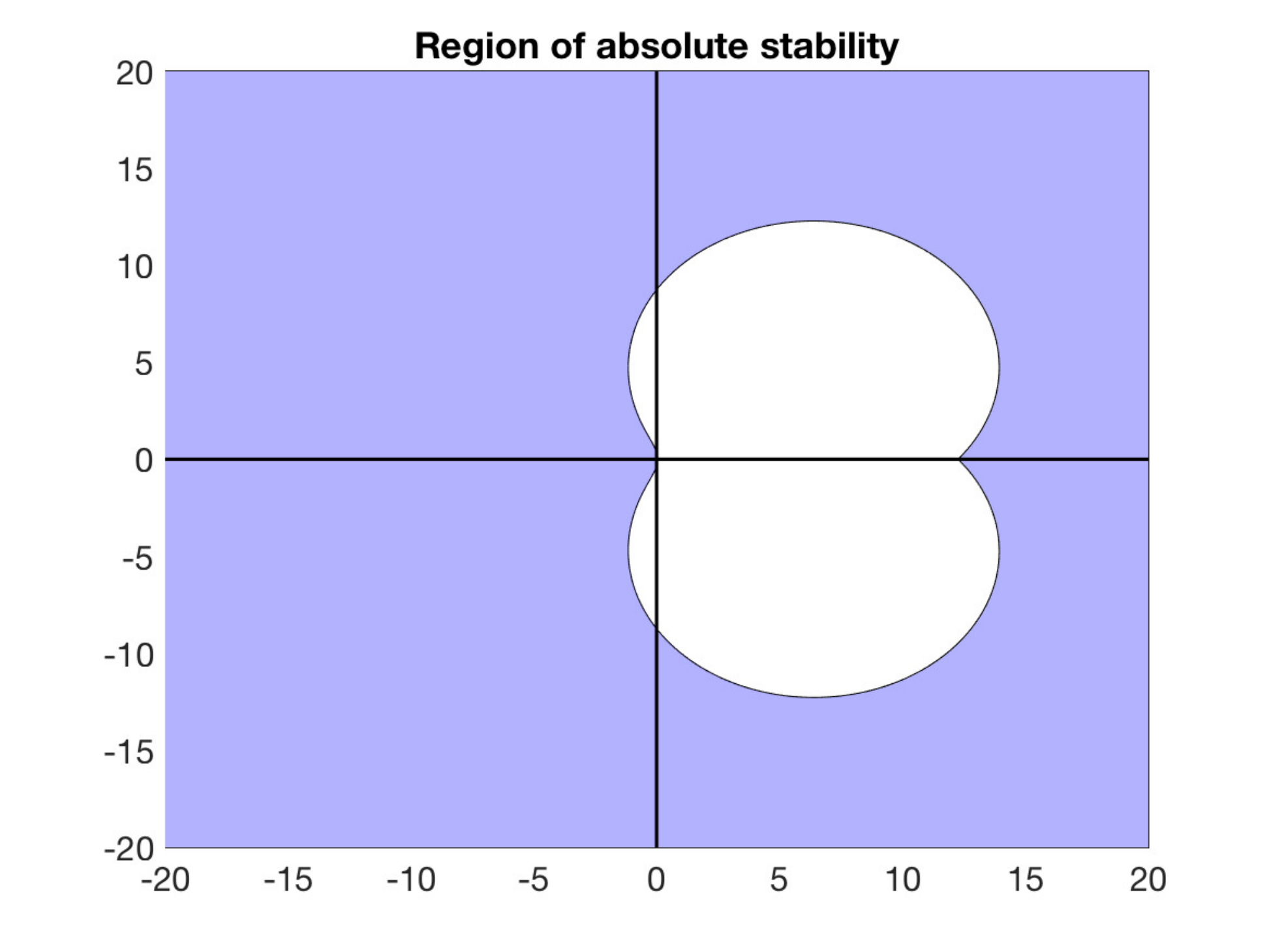}
\caption{Stability Region of the pre-filtered implicit Euler method IE-Pre-2
\eqref{IE_pre3} (left) which is L-stable, and of the pre- and post-filtered
implicit Euler IE-Pre-Pose-3  \eqref{IE_prepost3}(right) which is not A-stable, but is 
$A(\protect\alpha)$ stable with $\protect\alpha \approx 71.51$. }
\label{prepostIE}
\end{figure}

\subsubsection{Third order method using with 3-point pre- and post-filters
(IE-Pre-Post-3)\label{sec:beprepost}}

The method above has only a pre-filter. To raise the order of accuracy of
the scheme to third order, we can add a post-filter as well: 
\begin{subequations}
\label{IE_prepost3}
\begin{eqnarray}
y^{(1)} &=&-\frac{1}{2}u^{n-2}+u^{n-1}+\frac{1}{2}u^{n} \\
y^{(2)} &=&y^{(1)}+\Delta tF\left( y^{(2)}\right) \\
u^{n+1} &=&\frac{5}{11}u^{n-2}-\frac{15}{11}u^{n-1}+\frac{15}{11}u^{n}+\frac{%
6}{11}y^{(2)}
\end{eqnarray}%
While we have gained order of accuracy, we lost stability properties by
adding a post-filter. This third order method is not A-stable, but it has an
$A(\alpha )$ region of stability with $\alpha \approx 71.51$. The stability
region of this method is shown in Figure \ref{prepostIE} on the right.

\subsubsection{A-stable third order error inhibiting method (IE-EIS-3)}
\label{sec:EISmethod}

An A-stable third order method that is based on the implicit Euler method
can be obtained by using the error inhibiting approach presented in \cite{EISpp20}. 
In this formulation, we retain previous stages as well as previous
steps, to create a method of the form 
\end{subequations}
\begin{eqnarray*}
u^{n+\frac{2}{3}} &=&\frac{14}{5}u^{n-\frac{1}{3}}-\frac{9}{5}u^{n}+\frac{9}{%
5}\Delta tF(u^{n-\frac{1}{3}})-\frac{6}{5}\Delta tF(u^{n})+\Delta tF(u^{n+%
\frac{2}{3}}) \\
u^{n+1} &=&\frac{14}{5}u^{n-\frac{1}{3}}-\frac{9}{5}u^{n}+\frac{9}{5}\Delta
tF(u^{n-\frac{1}{3}}) \\
&&\text{ \ \ }-\frac{47}{60}\Delta tF(u^{n})-\frac{1}{12}\Delta tF(u^{n+%
\frac{2}{3}})+\Delta tF(u^{n+1}).
\end{eqnarray*}%
This method takes the time levels $u^{n-\frac{1}{3}}$ and $u^{n}$ and
advances them to $u^{n+\frac{2}{3}}$ and $u^{n+1}$. To reveal the dependence
on previous stages and previous steps, and the fact that the method is based
on the implicit Euler method, we re-write it in the form 
\begin{eqnarray*}
y_{n}^{(1)} &=&\frac{14}{5}y_{n-1}^{(2)}-\frac{9}{5}u^{n}+\frac{9}{5}\Delta
tF(y_{n-1}^{(2)})-\frac{6}{5}\Delta tF(u^{n}) \\
y_{n}^{(2)} &=&y_{n}^{(1)}+\Delta tF(y_{n}^{(2)}) \\
y_{n}^{(3)} &=&y_{n}^{(2)}+\frac{5}{12}\Delta tF(u^{n})-\frac{13}{12}\Delta
tF(y_{n}^{(2)}) \\
u^{n+1} &=&y_{n}^{(3)}+\Delta tF(u^{n+1}).
\end{eqnarray*}%
It is usually preferable to implement this in the form: 
\begin{eqnarray}
y_{n}^{(1)}\notag
% &=&\frac{14}{5}y_{n-1}^{(2)}-\frac{9}{5}u^{n}+\frac{9}{5}\left(
%y_{n-1}^{(2)}-y_{n-1}^{(1)}\right) -\frac{6}{5}\left(
%u^{n}-y_{n-1}^{(3)}\right)  \\
&=&\frac{23}{5}y_{n-1}^{(2)}-3u^{n}-\frac{9}{5}y_{n-1}^{(1)}+\frac{6}{5}%
y_{n-1}^{(3)} \label{eqn:eis3-solve-1} \\
y_{n}^{(2)} &=&y_{n}^{(1)}+\Delta tF(y_{n}^{(2)}) \\\notag
y_{n}^{(3)} 
%&=&y_{n}^{(2)}+\frac{5}{12}\left( u^{n}-y_{n-1}^{(3)}\right) -%
%\frac{13}{12}\left( y_{n}^{(2)}-y_{n}^{(1)}\right)  \\
&=&\frac{5}{12}u^{n}-\frac{1}{12}y_{n}^{(2)}-\frac{5}{12}y_{n-1}^{(3)}
+\frac{13}{12}y_{n}^{(1)} \\
u^{n+1} &=&y_{n}^{(3)}+\Delta tF(u^{n+1}).\label{eqn:eis3-solve-2}
\end{eqnarray}%
This method satisfies the order conditions up to second order, but its
coefficients also satisfy the error inhibiting property in \cite{EISpp20} and so
the resulting numerical solution is third order. As mentioned above, it is
A-stable, and we notice that while $y^{(1)}$ and $y^{(3)}$ are linear
combinations of previous steps and stages, $y^{(2)}$ and $u^{n+1}$ are
simply implicit Euler computations, that can be computed in any legacy code
that is based on the implicit Euler method.

It is important to note that, as in Runge--Kutta methods,  if the problem is
non-autonomous we need to compute the function evaluations at the correct
time-levels. In this case the time-levels are $t_{n}+\frac{2}{3}\Delta t$
for $y_{n}^{(2)}$, and $t_{n}+\Delta t$ for $u^{n+1}$.

\subsection{Core method: Implicit Midpoint Rule (MP)\label{sec:imp_midpoint}}
Next, we will  consider the second order implicit midpoint rule
as our core method: 
\begin{equation*}
u^{n+1}=u^{n}+\Delta t\;F\left( \frac{1}{2}u^{n}+\frac{1}{2}u^{n+1}\right)
\end{equation*}%
which, as we saw in Section 2, can be written in the equivalent form 
\begin{eqnarray*}
y^{(1)} &=&u^{n}+\frac{1}{2}\Delta tF(y^{(1)}) \\
u^{n+1} &=&2y^{(1)}-u^{n}.
\end{eqnarray*}%
Using pre- and post-filters we can raise the order of this method. While the
resulting methods are not A-stable, they are A$(\alpha)$ stable for large
values of $\alpha$.

\subsubsection{Third order filtered implicit midpoint rule (MP-Pre-Post-3)}

We can filter the implicit midpoint method to obtain the following third
order method: 
\begin{subequations}
\label{Midpoint_prepost3}
\begin{eqnarray}
y^{(1)} &=& -\frac{1}{12}u^{n-3}+\frac{1}{2}u^{n-2}-\frac{5}{4}u^{n-1}+\frac{%
11}{6}u^{n} \\
y^{(2)} &=& y^{(1)} + \frac{1}{2} \Delta t F(y^{(2)} ) \\
y^{(3)} &=& 2 y^{(2)} - y^{(1)} \\
u^{n+1} &=& \frac{1}{2} y^{(1)} + \frac{1}{2} y^{(3)} \; .
\end{eqnarray}
Observe that the final step is simply $y^{(2)}$, so we can say 
\end{subequations}
\begin{eqnarray*}
y^{(1)}&=& -\frac{1}{12} u^{n-3}+\frac{1}{2}u^{n-2}-\frac{5}{4}u^{n-1}+\frac{%
11}{6}u^{n} \\
y^{(2)} &=& y^{(1)} + \frac{1}{2}\Delta t F(y^{(2)} ) \\
u^{n+1} & = & y^{(2)} \; .
\end{eqnarray*}
However, if one is working with a code that treats the implicit midpoint
rule as a black box and does not output the intermediate value in the core
method, it is more convenient to use \eqref{Midpoint_prepost3}. 

This method
is not A-stable, but it is $A(\alpha)$ stable with $\alpha = 79.4 $. 
The advantage of this method is that using the same pre-filter but a
different post-filter gives a fourth order method, as we see in the next
subsection. The two approaches form an embedded pair which is convenient for 
error estimation.

\subsubsection{Fourth order filtered implicit midpoint rule (MP-Pre-Post-4)}

If we use a method similar to \eqref{Midpoint_prepost3}, but with a
different post-filter 
\begin{subequations}
\label{Midpoint_prepost4}
\begin{eqnarray}
u^{n+1} &=&-\frac{1}{25}u^{n-3}+\frac{4}{25}u^{n-2}-\frac{6}{25}u^{n-1}+%
\frac{4}{25}u^{n}+\frac{24}{25}y^{(2)}\;,
\end{eqnarray}%
we obtain a fourth order method . If the implicit midpoint rule is coded as
a black box, we may prefer to write this in the form 
\end{subequations}
\begin{eqnarray*}
y^{(1)} &=&-\frac{1}{12}u^{n-3}+\frac{1}{2}u^{n-2}-\frac{5}{4}u^{n-1}+\frac{%
11}{6}u^{n} \\
y^{(2)} &=&y^{(1)}+\frac{1}{2}\Delta tF(y^{(2)}) \\
y^{(3)} &=&2y^{(2)}-y^{(1)} \\
u^{n+1} &=& -\frac{2}{25}u^{n-3}+\frac{2}{5}u^{n-2}-\frac{21}{25}u^{n-1}+\frac{%
26}{25}u^{n}+\frac{12}{25} y^{(3)}  \notag
\end{eqnarray*}

This fourth order method has $A(\alpha )$ stability region with $\alpha
\approx 70.64$.  By using the post-filter from the third order
method \eqref{Midpoint_prepost3} and comparing it with the result from this
fourth order method, we obtain an error estimator.

\begin{figure}[tbp]
\includegraphics[scale=.35]{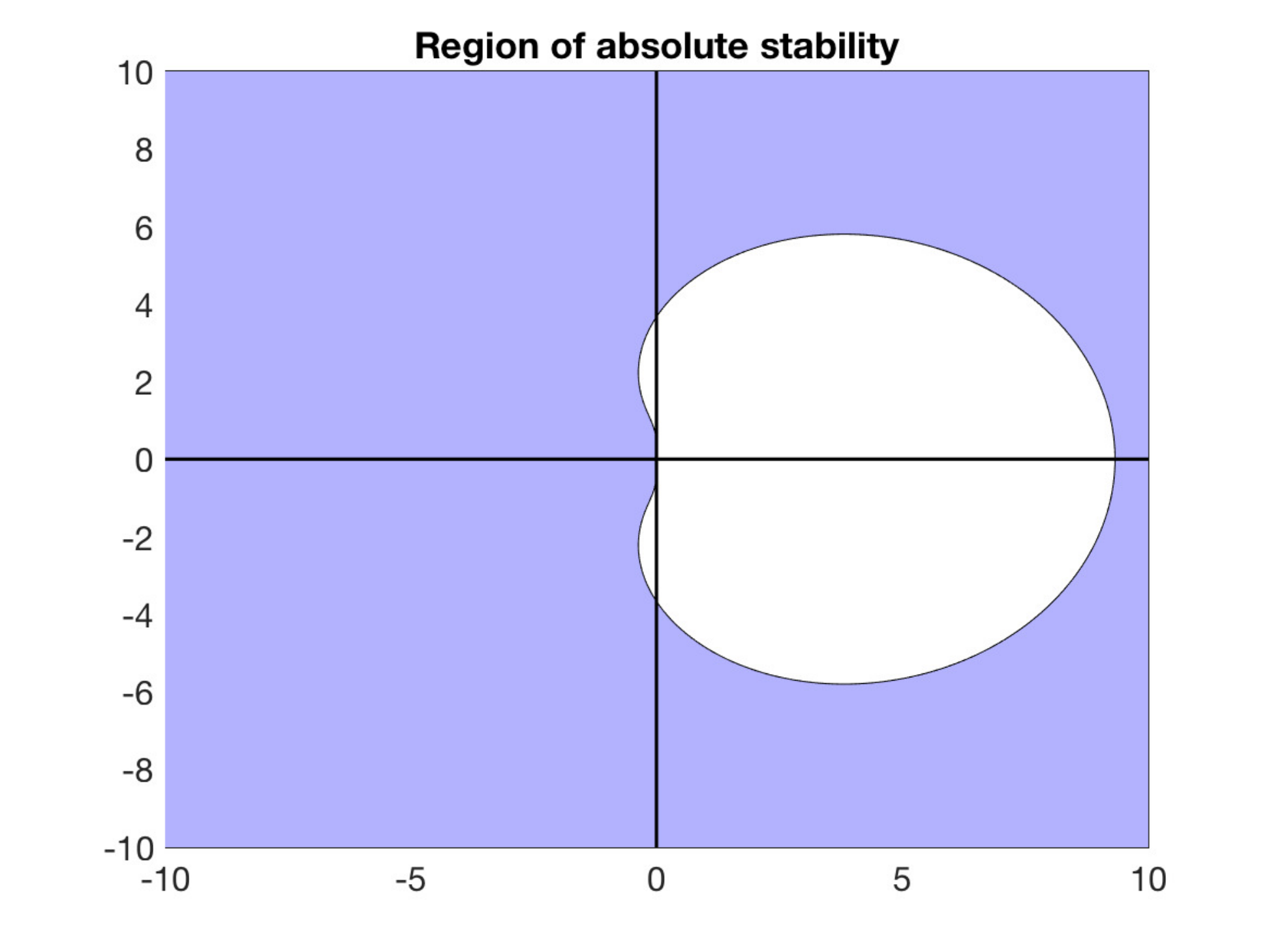}\;\; %
\includegraphics[scale=.35]{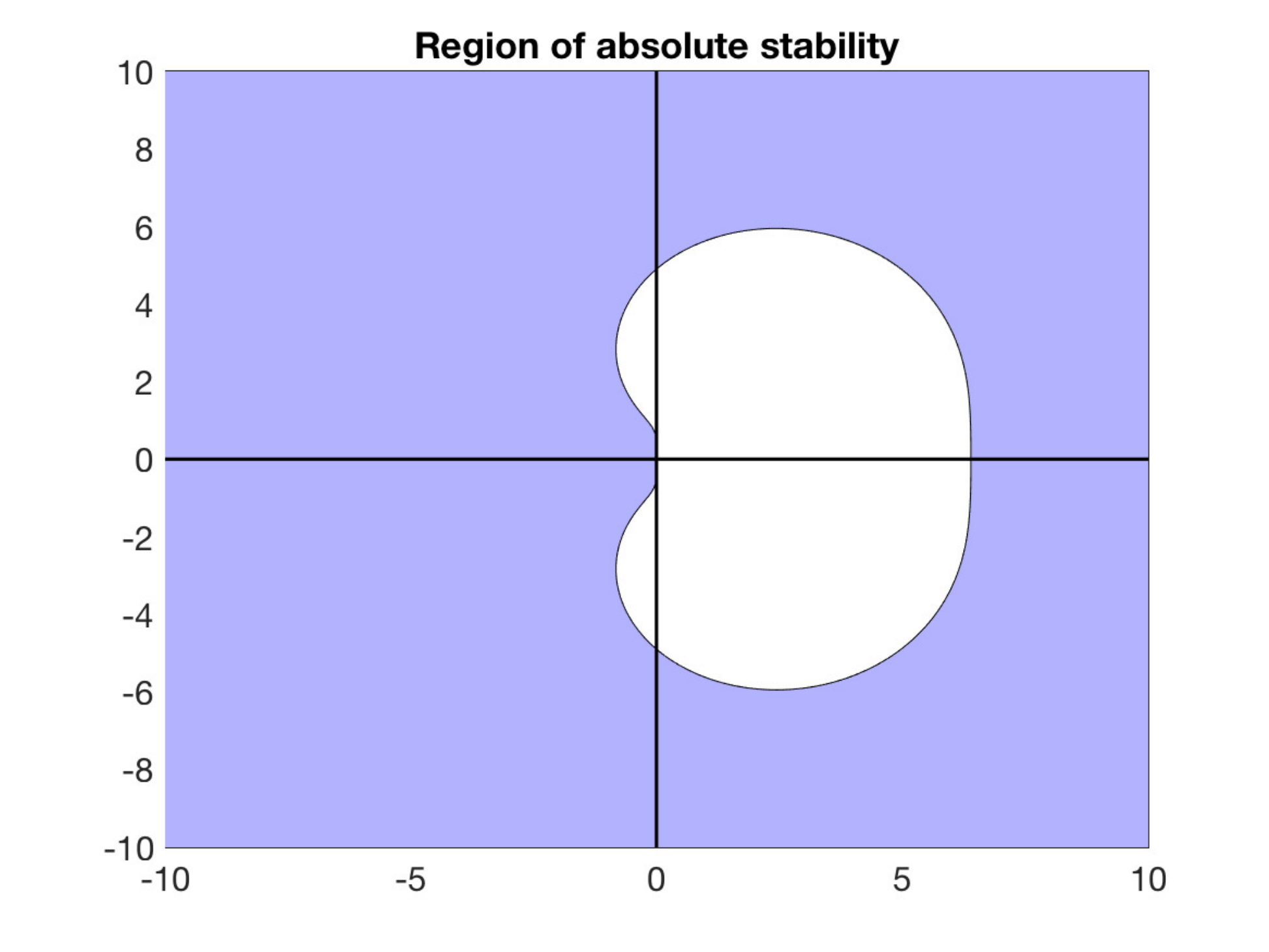}
\caption{Stability regions of third  order MP-Pre-Post-3 (left) and fourth order MP-Pre-Post-4 (right)
filtered implicit midpoint methods. }
\label{midpointprepost}
\end{figure}

\subsection{Filtered BDF2 \label{sec:bdf2}}

In this subsection, we begin with the second order backward differentiation
formula (BDF-2) scheme 
\begin{equation*}
u^{n+1} =-\frac{1}{3}u^{n-1} + \frac{4}{3}u^n + \frac{2}{3}\Delta t
F(u^{n+1}).
\end{equation*}
as the core method. We write this method in GLM form as 
\begin{eqnarray*}
y^{(1)} & =& u^n \\
y^{(2)} & = & -\frac{1}{3}u^{n-1} + \frac{4}{3}u^n + \frac{2}{3}\Delta t
F(y^{(2)}). \\
u^{n+1} & = & y^{(2)} .
\end{eqnarray*}

\subsubsection{Third order filtered method (BDF2-Post-3)}
We can post-filter the BDF2 method to obtain a third order method: 
\begin{eqnarray*}
y^{(1)} & =& u^n \\
y^{(2)} & = & -\frac{1}{3}u^{n-1} + \frac{4}{3}u^n + \frac{2}{3}\Delta t
F(y^{(2)}) \\
u^{n+1} & = & y^{(2)} - \frac{2}{11} \left( y^{(2)} - 3 u^n + 3 u^{n-1} -
u^{n-2} \right) \\
& = & \frac{9}{11} y^{(2)} + \frac{6}{11} u^n - \frac{6}{11} u^{n-1} + \frac{%
2}{11} u^{n-2}.
\end{eqnarray*}
This method has stability region $A(\alpha)$ with $\alpha=83.89$.

\subsubsection{A third order method with enhanced stability region (BDF2-Pre-Post-3)}

By adding a four-step pre- and post-filter, we can obtain a third order
method. The following third order method has four steps, three stages, and
stage order $q=2$. The method was created to optimize the value $\alpha$ in
the $A(\alpha)$ linear stability region. This method has value $\alpha=89.59$%
. %19317072476$.

\begin{figure}[tbp]
\begin{center}
\includegraphics[scale=.30]{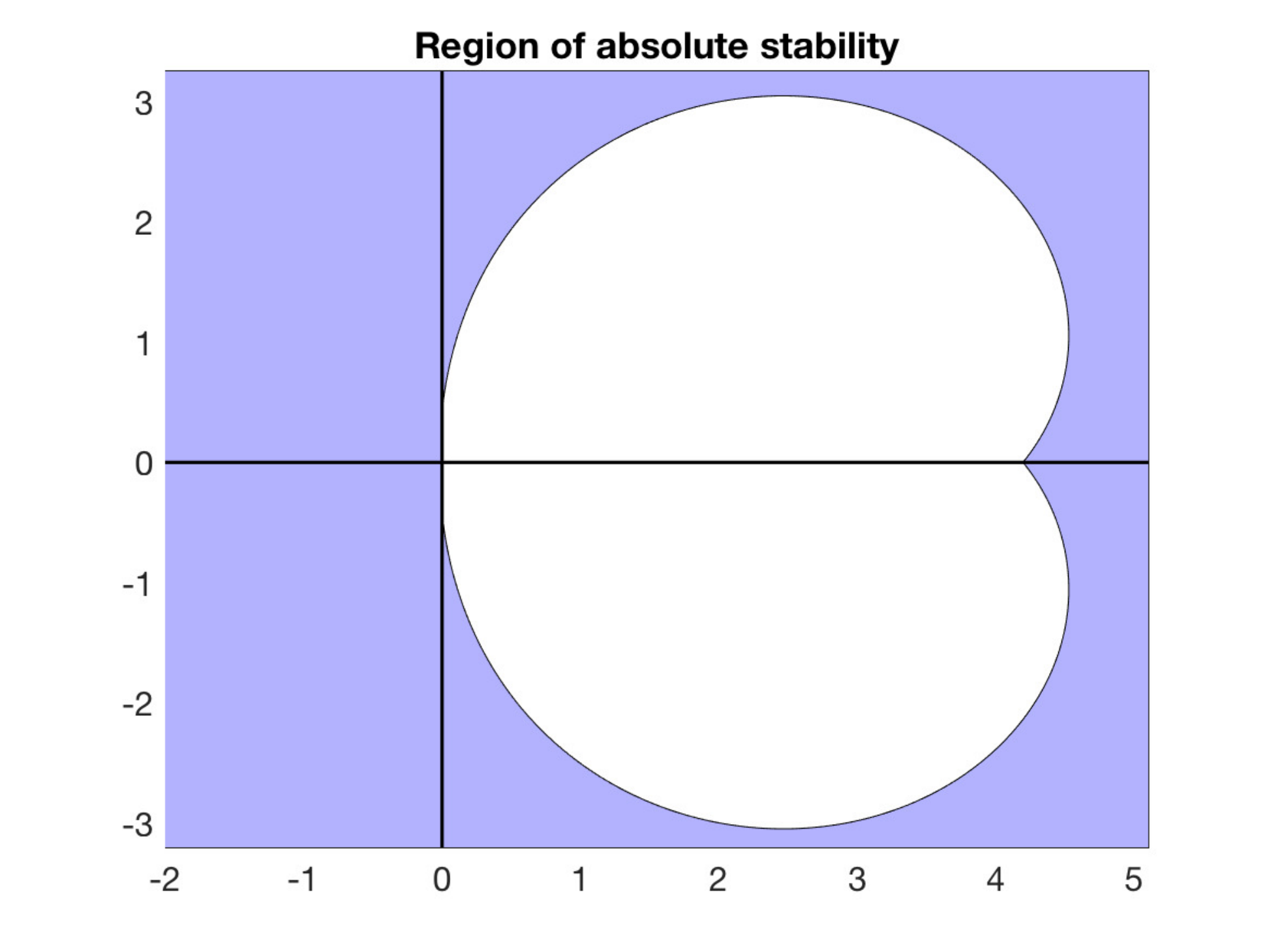} %
\includegraphics[scale=.30]{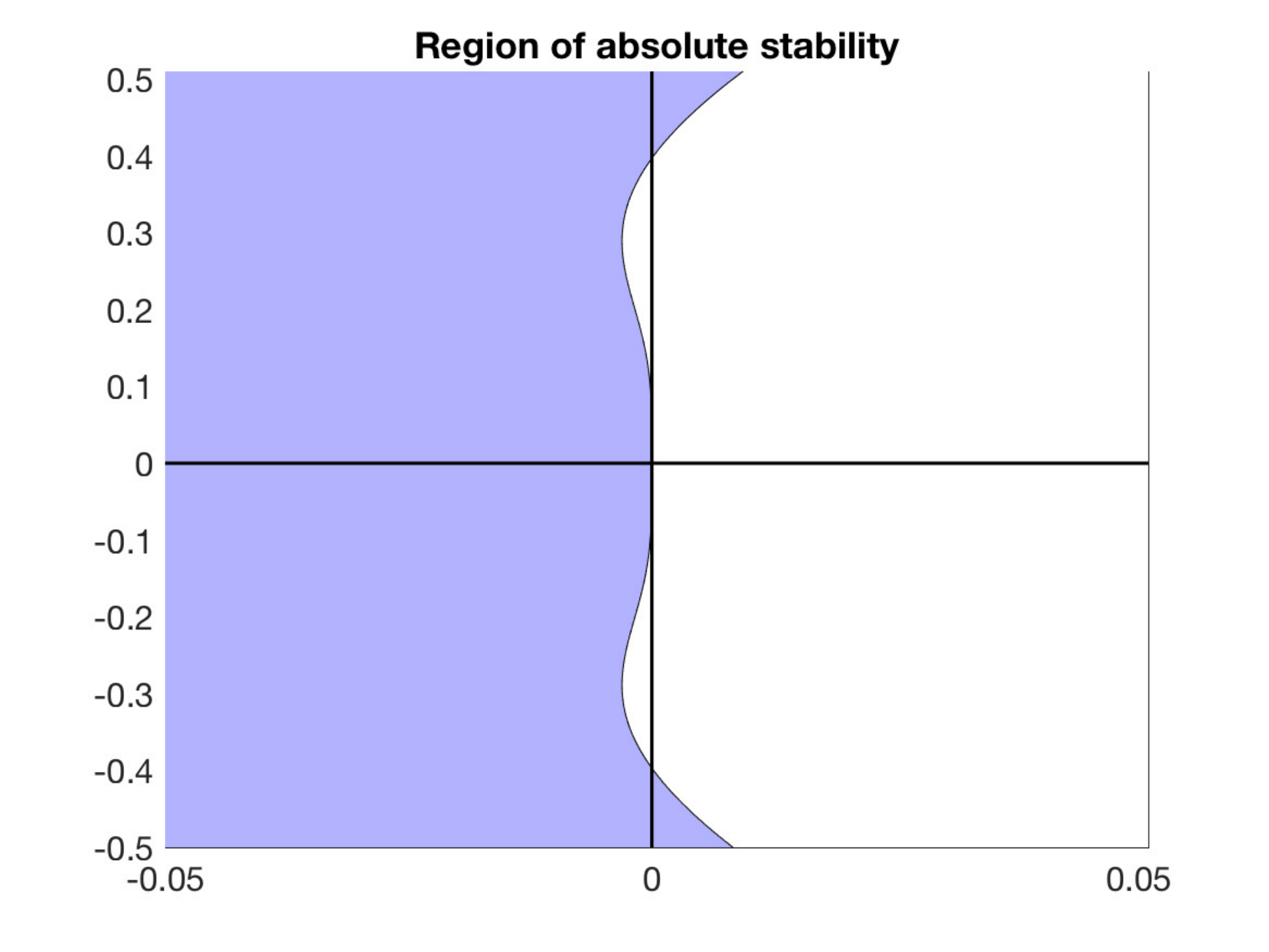}
\end{center}
\caption{Stability region for the BDF2-Pre-Post-3 \eqref{Method2}, with $k=4,s=3,q=2$. On the
left we see the stability region, the zoomed image on the right shows that
the method is not quite A-stable. In fact, we have $A(\protect\alpha)$
stability with $\protect\alpha \approx 89.59$. }
\label{Method2stability}
\end{figure}

\begin{subequations}
\begin{eqnarray}
y^{(1)} &=&d_{1}u^{n-3}+d_{2}u^{n-2}+d_{3}u^{n-1}+d_{4}u^{n},\;\;\;\;%
\mbox{(Pre-filter)}  \label{Method2} \\
y^{(2)} &=&-\frac{1}{3}u^{n-1}+\frac{4}{3}y^{(1)}+\frac{2}{3}\Delta
tF(y^{(2)}),. \\
u^{n+1} &=&\theta _{1}u^{n-3}+\theta _{2}u^{n-2}+\theta _{3}u^{n-1}+\theta
_{4}u^{n}+b\Delta tF(y^{(2)})\;\;\;\;\mbox{(Post-filter)} \\
&=&\theta _{1}u^{n-3}+\theta _{2}u^{n-2}+\theta _{3}u^{n-1}+\theta _{4}u^{n}+%
\frac{3b}{2}\Delta t\left( y^{(2)}+\frac{1}{3}u^{n-1}-\frac{4}{3}%
y^{(1)}\right)  \notag \\
&=&\theta _{1}u^{n-3}+\theta _{2}u^{n-2}+(\theta _{3}+\frac{1}{2}%
b)u^{n-1}+\theta _{4}u^{n}+\frac{3b}{2}\Delta t\left( y^{(2)}-\frac{4}{3}%
y^{(1)}\right),  \notag
\end{eqnarray}%
\end{subequations}
with coefficients
\begin{align*}
&d_{1}=2.670130894410204, &d_{2}=-3.311517498805319, \\
&d_{3}=-3.489799303077245, &d_{4}=5.131185907472361, \\
&\theta _{1}=0.370742163920604, &\theta _{2}=-0.631064728171402, \\
&\theta _{3}=-0.729528261935270,&\theta _{4}=1.989850826186068, \\
&b=0.120568773483737, &
\end{align*}
The linear stability region is presented in Figure \ref{Method2stability}.

\subsection{Filtered fully implicit Runge--Kutta (2,2) \label{sec:rk22}}

We emphasize that this approach works to pre- and post-filter all GLMs, not
just linear multistep methods. Consider the L-stable fully implicit Lobatto
IIIC scheme: 
\begin{eqnarray*}
y^{(1)} &=&u^{n}+\frac{1}{2}\Delta tF(y^{(1)})-\frac{1}{2}\Delta tF(y^{(2)})
\\
u^{n+1} &=&u^{n}+\frac{1}{2}\Delta tF(y^{(1)})+\frac{1}{2}\Delta tF(y^{(2)})
\end{eqnarray*}%
The 2-step time-filtered scheme, which we call RK22-Pre-Post-3,
 can be written as : %d_1=0.373461706729200
\begin{eqnarray*}
\hat{u} &=&d_{1}u^{n-1}+d_{2}u^{n} \\
y^{(1)} &=&\hat{u}+\frac{1}{2}\Delta tF(y^{(1)})-\frac{1}{2}\Delta
tF(y^{(2)}) \\
y^{(2)} &=&\hat{u}+\frac{1}{2}\Delta tF(y^{(1)})+\frac{1}{2}\Delta
tF(y^{(2)}) \\
u^{n+1} &=&q_{1}u^{n-1}+q_{2}u^{n}+q_{3}y^{(1)}+q_{4}y^{(2)}\vspace*{-0.2in}
\end{eqnarray*}%
where 
\begin{align*}
&d_{1} =0.373461706729200,&d_{2}=0.626538293270800, \\
&q_{1} =-0.075425887737539,&q_{2}=0.551112405533260, \\
&q_{3} =-0.596071637983322,&q_{4}=1.120385120187601.
\end{align*}
This scheme is third order and is A-stable, but not L-stable.

\section{Numerical tests of the methods\label{sec:NumericalTests}}

\label{numerical} This section presents several numerical test and
comparisons of the timestepping methods applied to the Navier-Stokes
equations. The spacial terms are discretized by a standard (not upwind)
finite element method with inf-sup stable elements. Let $\mathcal{P}_k^d$ be
Lagrange finite elements with $d$ components with $d= 2$ or $3$, and degree $%
k$. We use Hood-Taylor elements described by $(\mathcal{P}_k^d, \mathcal{P}%
_{k-1}^1)$, which correspond to $\mathcal{P}_k^d$ vector elements for
velocity and $\mathcal{P}_{k-1}^1$ scalar elements for pressure. We use a
sufficiently fine meshes such that we expect the error to be dominated by
time discretization.

The fully discrete methods are based on a standard weak formulation for the
incompressible NSE. Let $\Omega $ be and open subset of $\mathbb{R}^{2}$,
and let $(\cdot ,\cdot )$ denote the $L^{2}(\Omega )$ inner product. In %
\eqref{eq:NSE}, test the momentum equation with a vector valued function $v$
which vanishes on the boundary and the mass equation with a scalar function $%
q$ with zero mean. After integrating by parts, the weak formulation of %
\eqref{eq:NSE} is 
\begin{gather*}
(u_{t},v)+\nu (\nabla u,\nabla v)+(u\cdot \nabla u+\frac{1}{2}(\nabla \cdot
u)u,v)-(p,\nabla \cdot v)=(f,v), \\
(\nabla \cdot u,q)=0.
\end{gather*}%
The nonlinearity has been explicitly skew-symmetrized (when boundary
conditions allow) in a standard way by adding $\frac{1}{2}(\nabla \cdot u)u$%
. 

While the methods we test are derived for autonomous ODEs, the ensuing tests involve non-autonomous sources and time dependent boundary conditions. There is also a question about the impact of the pre- and post-processors on the fluid pressure since it is an unknown which does not satisfy an evolution equation. These issues are addressed in  \ref{sec:non-aut}.

The first test in Section \ref{sec:test_analytic} is a convergence rate
verification against a closed form, exact solution. The second test, in
Section \ref{sec:test_benchmark}, is a benchmark test of flow through a
channel past a cylindrical obstacle for which there are published reference
values in \cite{J04}. The last test, in Section \ref{sec:test_offset}, is
for a quasi-periodic flow where phase accuracy is important.

\subsection{Convergence Benchmark Test\label{sec:test_analytic}}

For this test we solve the homogeneous NSE (so $f(x)=0$) with $d=2$ under $%
2\pi $ periodic boundary conditions with zero mean. Since the solution is
analytic, we used higher order Hood-Taylor, $(\mathcal{P}_{4}^{2},\mathcal{P}%
_{3}^{1})$ elements and 125 element edges per side of the periodic box,
resulting in 640,625 degrees of freedom. The boundary conditions and zero
mean condition are imposed strongly (as usual) on the FEM spaces. The
Taylor-Green exact solution used is%
\begin{eqnarray*}
u(x,y,t) &=&e^{-2\nu t}(\cos x\sin y,-\sin x\cos y)\text{ and} \\
\text{ }p(x,y,t) &=&-\frac{1}{4}e^{-4\nu t}(\cos 2x+\cos 2y)
\end{eqnarray*}%
The solutions are computed to a final time at $T_{f}=1$ for several
stepsizes starting from $\Delta t=0.2$, and then halving. Since the solution
decays exponentially, absolute errors have little meaning. Thus we compute
relative errors at final time $T_{f}=1$ 
\begin{equation}
\text{relative error}=\sqrt{\frac{\int_{\Omega }|u(T_{f})-u_{h}(T_{f})|^{2}dx%
}{\int_{\Omega }|u(T_{f})|^{2}dx}}.  \notag
\end{equation}

\begin{figure}[th]
\centering
\begin{subfigure}{0.49\linewidth}
   \centering
   \includegraphics[width = 1\linewidth]{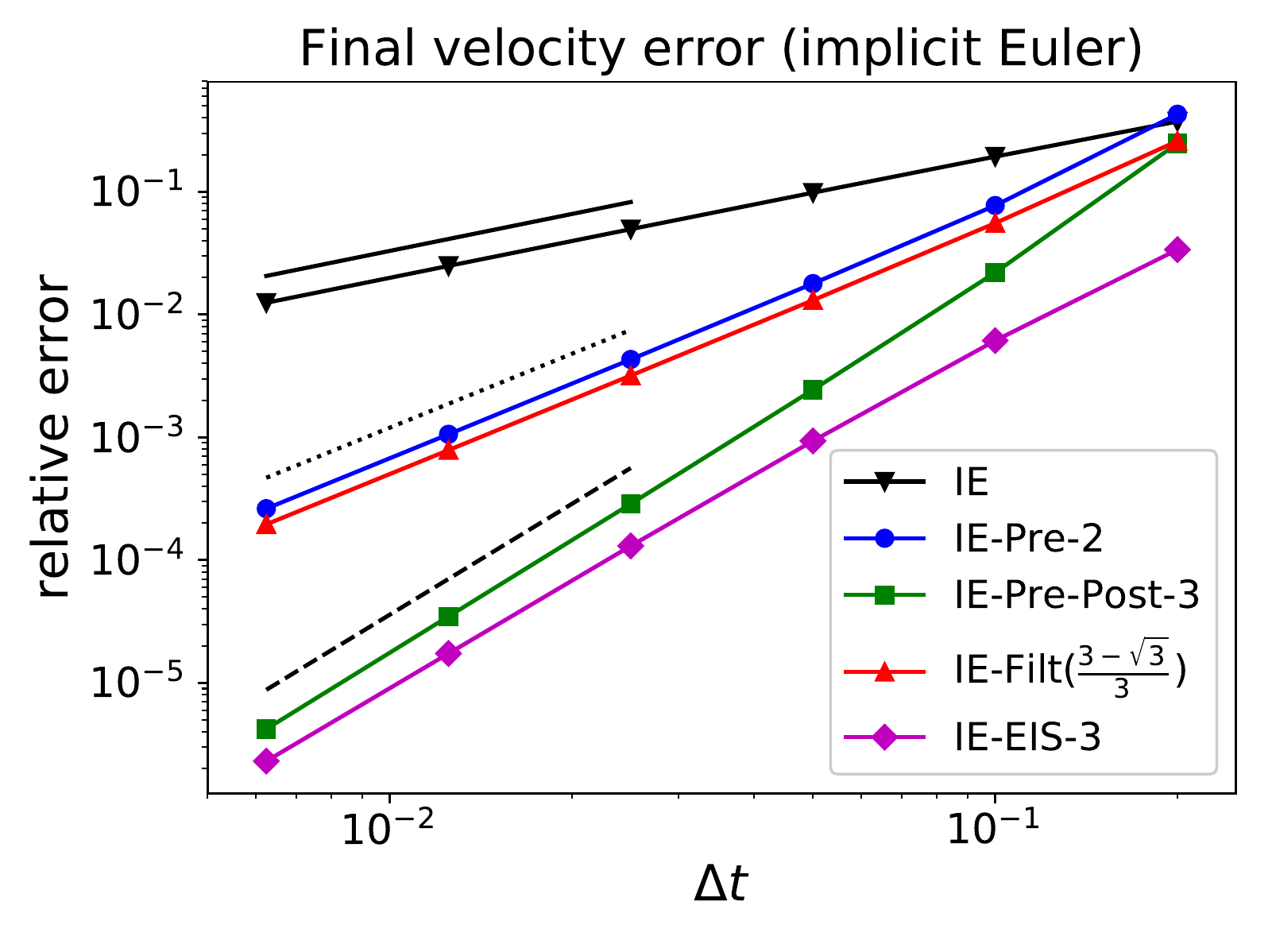}
\end{subfigure}
\begin{subfigure}{0.49\linewidth}
   \centering
   \includegraphics[width = 1\linewidth]{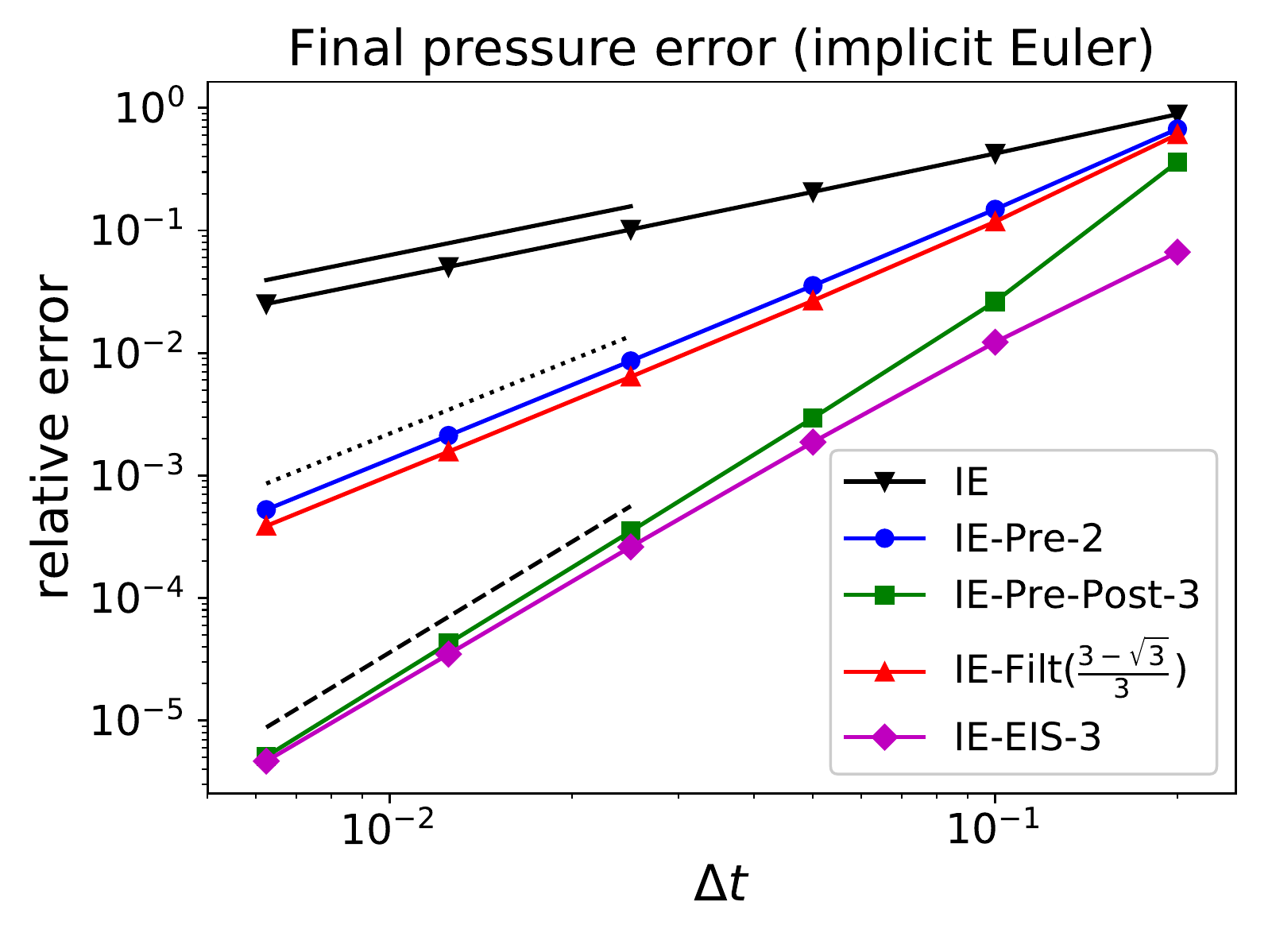}
\end{subfigure}
\par
\begin{subfigure}{0.49\linewidth}
   \centering
   \includegraphics[width = 1\linewidth]{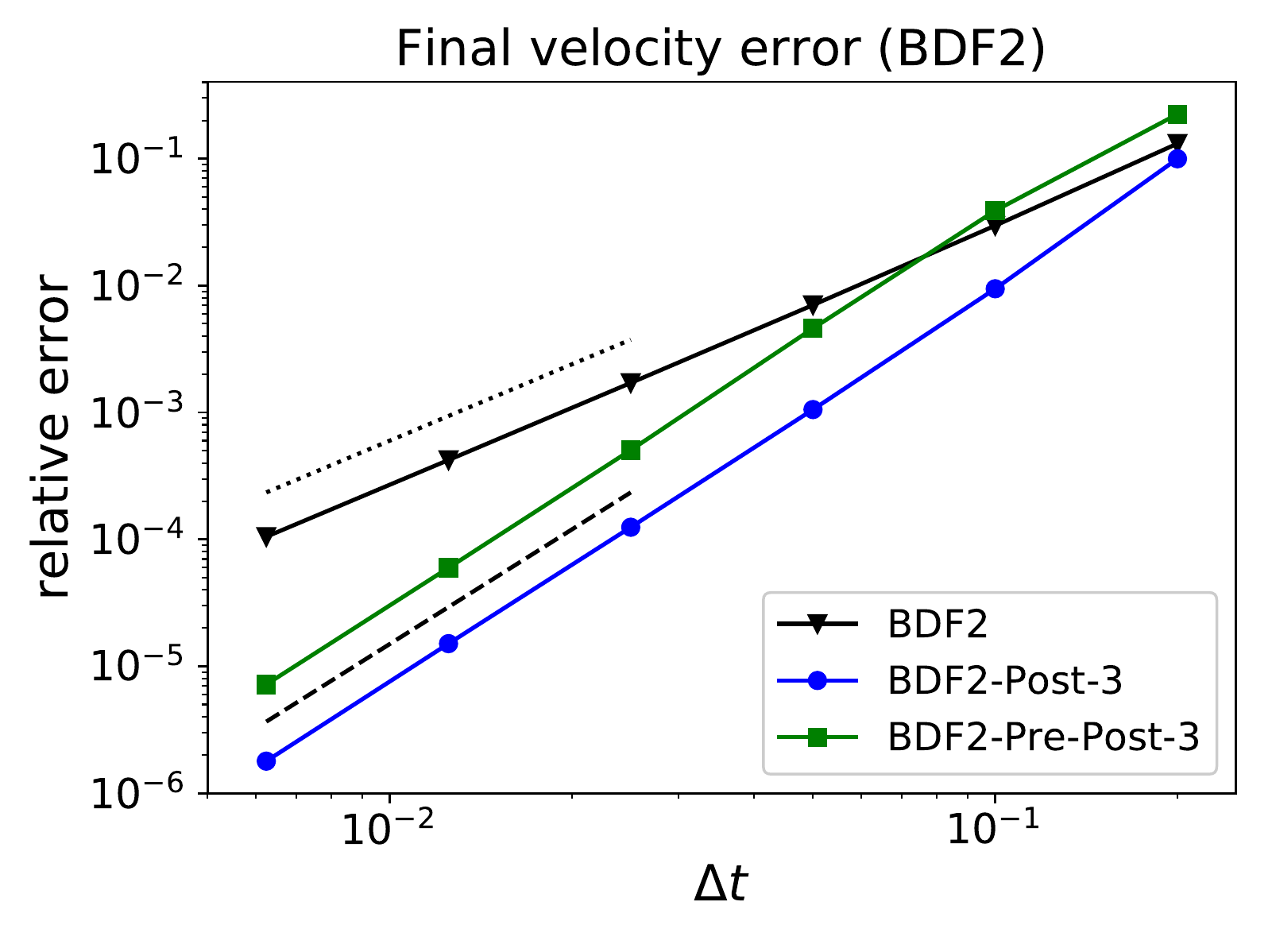}
\end{subfigure}
\begin{subfigure}{0.49\linewidth}
   \centering
   \includegraphics[width = 1\linewidth]{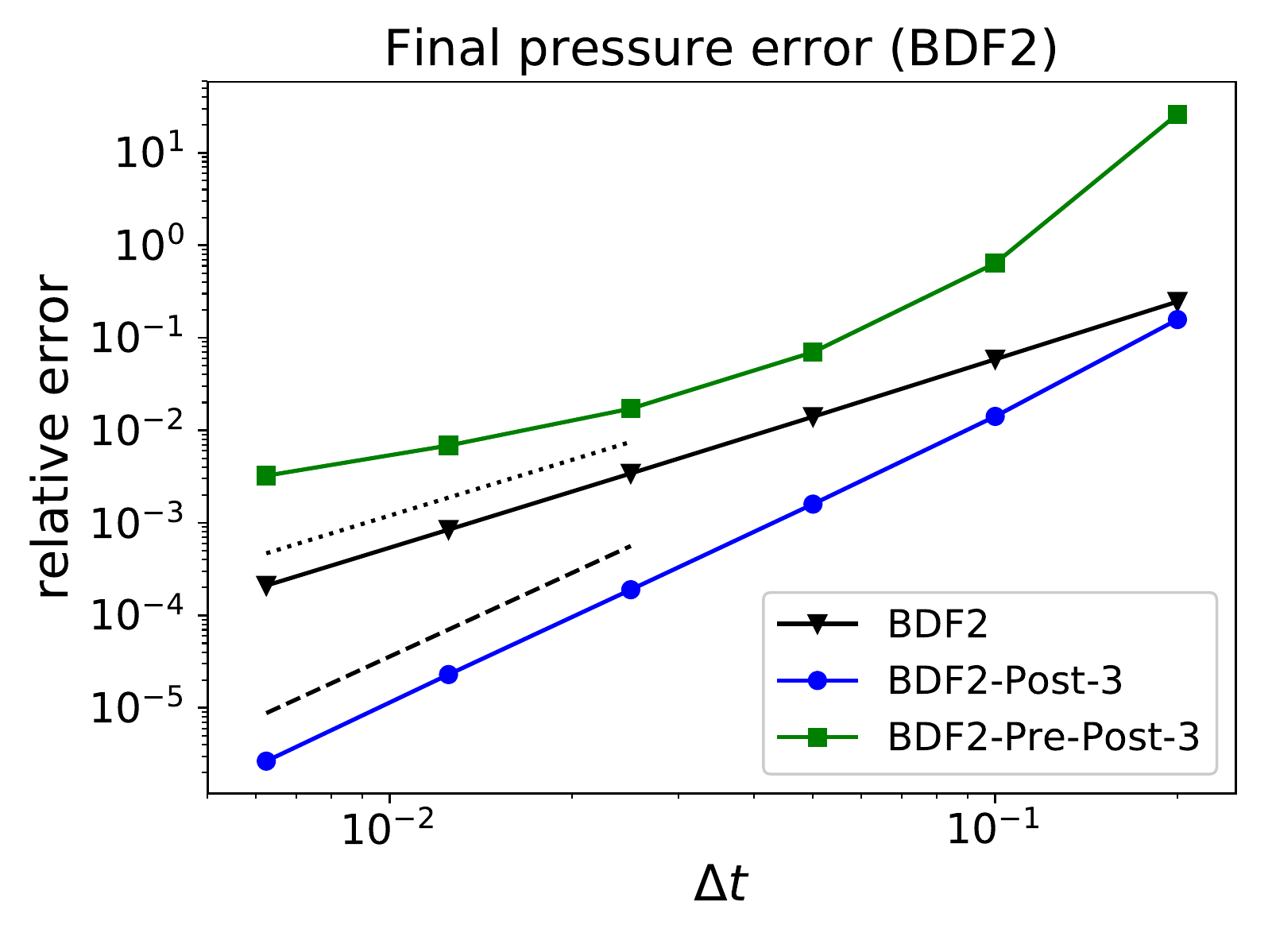}
\end{subfigure}
\par
\begin{subfigure}{0.49\linewidth}
   \centering
   \includegraphics[width = 1\linewidth]{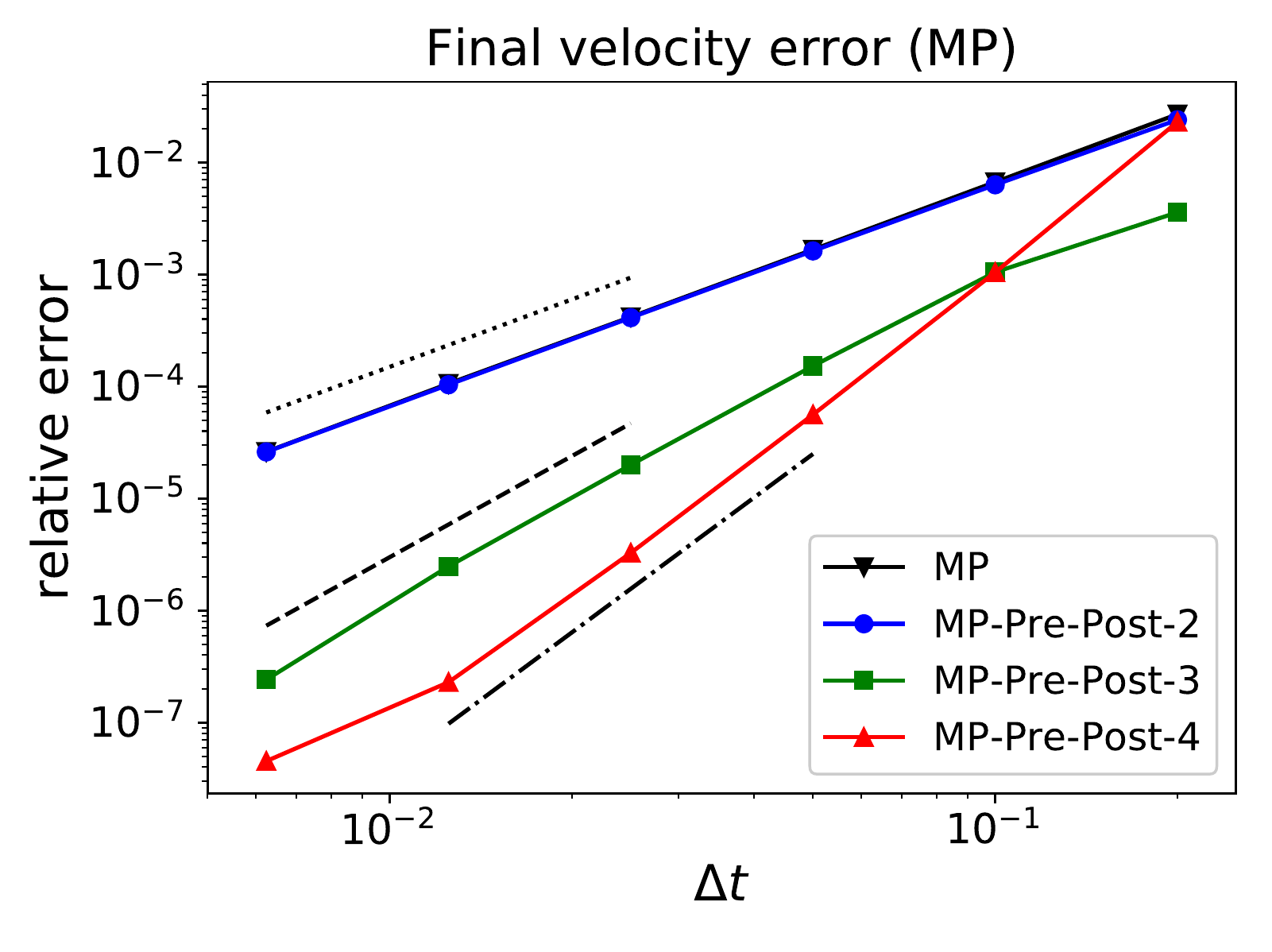}
\end{subfigure}
\begin{subfigure}{0.49\linewidth}
   \centering
   \includegraphics[width = 1\linewidth]{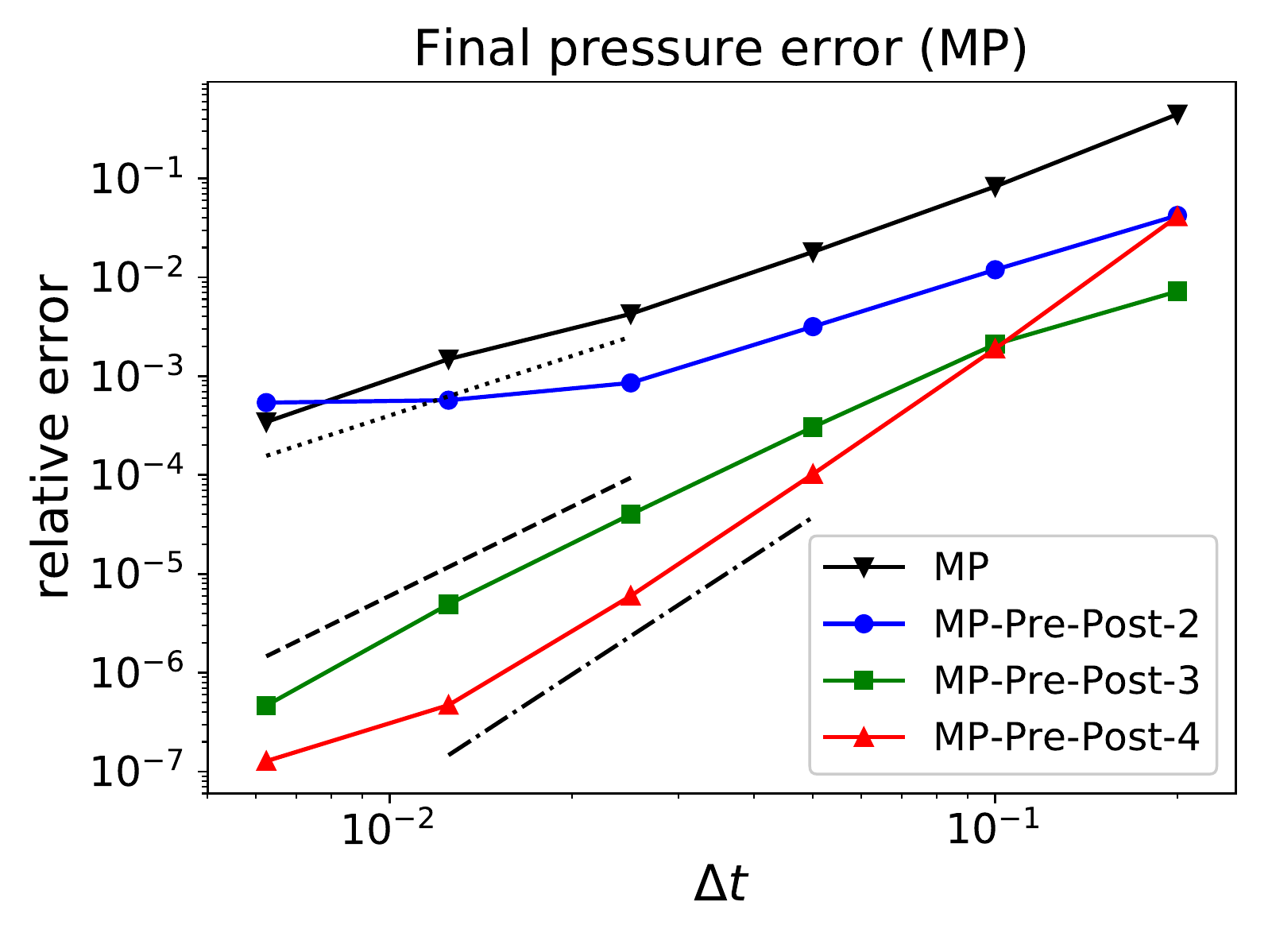}
\end{subfigure}
\caption{Convergence rates of velocity and pressure for different base
methods. The reference lines for the orders of the method are solid (first), dotted (second), dashed (third) and dashed-dotted (fourth). The velocities of the methods converge at the expected rates except. The errors in pressure for BDF2-Pre-Post-3 and MP-Pre-Post-2 either plateau or converge sub-optimally. The reason is unknown and is the subject of future research.}
\label{fig:errors}
\end{figure}

Comparing the methods, for implicit Euler the core method (upper row of
Figure 4)\textit{\ pre- and post-filtered implicit Euler and EIS3 are by far
the most accurate}. EIS3 requires 2 implicit Euler solves per step compared
to 1 solve/step for pre \& post-filtered IE. Both attain their the $\mathcal{O}%
(\Delta t^{3})$ rate of convergence, as predicted by the theory.
The middle row treats the midpoint rule plus filters. The result here
is entire consistent: \textit{higher accuracy (in the sense of consistency
error) produces a more accurate approximation}. We note that in the right
side figure the 4th order approximation hits an error plateau of $10^{-7}$
where the spacial errors are no longer negligible. In the bottom row the
second BDF2 filtered method performed far better, attaining its expected
rate of convergence. In all tests, \textit{run times depended on the number
of core method solves, independent of the number of filter steps}, as
expected.

\subsection{Benchmark test: Flow past a cylinder\label{sec:test_benchmark}}

This next test is a commonly used benchmark described in \cite{ST96}. Fluid
flows into a channel from the left and flows around slightly off center
cylindrical obstacle. The fluid starts at rest and the inflow velocity is
ramped up from zero. When the inflow velocity is high enough, vortices shed
off the obstacle (see Figure \ref{fig:flow-past-cylinder}). The data monitored are the lift and drag due to the
cylinder, and the pressure difference before and after the cylinder. The
geometry and flow profile is given by \cite{ST96}; we compare our results to
benchmark lift and drag values obtained in a DNS study from \cite{J04}.

\begin{figure}[th]
\centering
\begin{subfigure}{\linewidth}
   \centering
   \includegraphics[width = 1\linewidth]{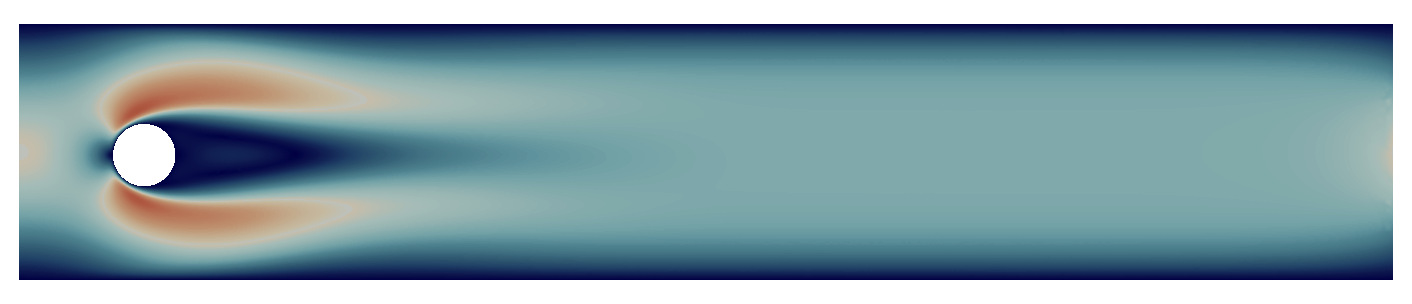}
\end{subfigure}
\begin{subfigure}{\linewidth}
   \centering
   \includegraphics[width = 1\linewidth]{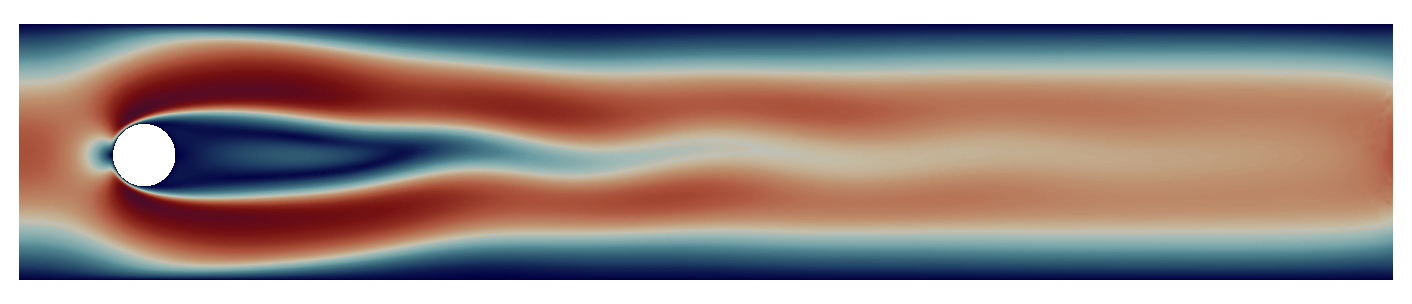}
\end{subfigure}
\begin{subfigure}{\linewidth}
   \centering
   \includegraphics[width = 1\linewidth]{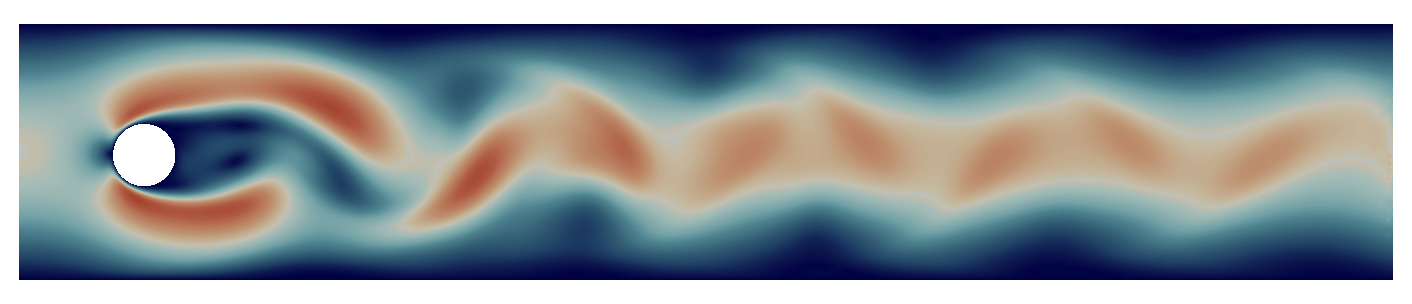}
\end{subfigure}
\begin{subfigure}{\linewidth}
   \centering
   \includegraphics[width = 1\linewidth]{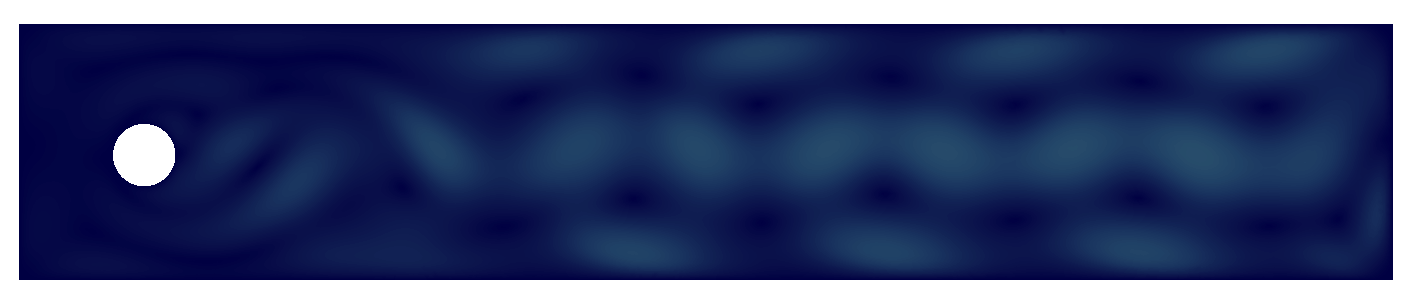}
\end{subfigure}
\begin{subfigure}{\linewidth}
   \centering
   \includegraphics[width = 1\linewidth]{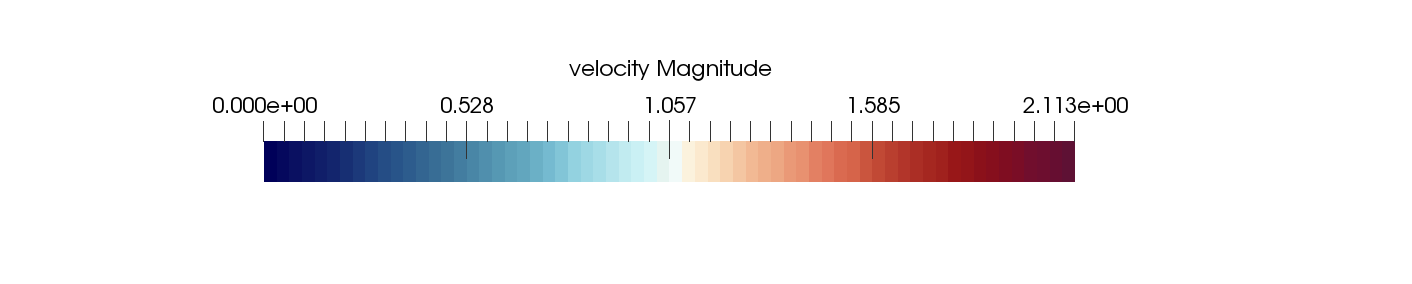}
\end{subfigure}
\caption{Snapshots of the flow past a cylinder solution, described in Section \ref{sec:test_benchmark}, at times (from top to bottom) $t = $2, 4, 6, and 8.\label{fig:flow-past-cylinder}}
\end{figure}

Time dependent boundary conditions present questions for both multi-step and
multi-stage methods. Here it also means that the numerical solution will
satisfy filtered BCs rather than their exact values since we applied the
filtering steps\ as written with no special treatment of the inflow. The
error committed at the boundaries is still consistent up to the order of the
method and no problem was observed. 

The flow configuration, \cite{ST96}, \cite{J04}, is as follows. The
kinematic viscosity $\nu =10^{-3}$, the final time is $T_f = 8$, and the
domain is

\begin{equation}
\Omega =\{(x,y)\,|\,0<x<2.2,\hspace{2mm}0<y<0.41\text{ and }%
(x-0.2)^{2}+(y-0.2)^{2}>0.05^{2}\}.  \notag
\end{equation}%
The external body force $f$ is set to zero. The inflow and outflow
velocities are parabolic: 
\begin{equation*}
u|_{(0,y)}=u|_{(2.2,y)}=0.41^{-2}(6y(0.41-y),0).
\end{equation*}%
We use $(\mathcal{P}_{2}^{2},\mathcal{P}_{1}^{1})$ elements with a well
resolved, static mesh with 479,026 degrees of freedom with 1000 edges on the
interior cylinder boundary. This is the same mesh used in \cite{DLZ18} and
was generated by adaptive refinement from solving the steady problem.

We measure the maximum drag $c_{d,\max }$, the time of maximum drag $%
t(c_{d,\max })$, maximum lift $c_{l,\max }$, time of maximum lift $%
t(c_{l,\max })$ and the pressure drop at the final time between the front
and back of the cylinder, $\Delta p(8)=p_{h}(0.15,0.2)-p_{h}(0.25,0.2)$. We
run the tests for the same $\Delta t^{\prime }s$ in \cite{J04}, which have a
largest value of $\Delta t=0.04$, and are successively halved until the
smallest value of $\Delta t=0.00125$. The stepsizes are doubled for IE-EIS-3
for a fair comparison since it requires two implicit Euler solves for one
timestep. If a simulation failed, the missing values are filled in with
dashes in the tables. Failure only happened for IE-Pre-Post-3{} and
MP-Pre-Post-4{} when the energy of the solution grew which was followed by
both Newton and fixed-point iterations failing in the nonlinear solve.

The results for the methods based on IE are shown in Table \ref{tab:ie},
results for methods based on BDF2 are shown in Table \ref{tab:bdf2}, and
results for methods based on MP are shown in Table \ref{tab:mp}. 

For the IE based methods, every method shows improvement over IE in the prediction of the maximum lift with the exception IE-Pre-Post-3 for the larger stepsizes. IE-Pre-Post-3 was the only IE based method to show instability, and the simulation failed to run to completion for larger stepsizes.  IE-EIS-3 showed superior accuracy at the smallest stepsize in the lift coefficient and pressure drop. 

For the BDF2 based methods, both BDF2-Post-3 and BDF2-Pre-Post-3 show a dramatic improvement in predicting the final pressure drop over BDF2. Interestingly, BDF2-Pre-Post-3 exhibits better convergence of pressure than was suggested by the test in Section \ref{sec:test_analytic}.

For the MP based methods, MP and MP-Pre-Post-2 yielded essentially identical results. The most noticable improvement over the base method is MP-Pre-Post-3's pressure drop which matches all four digits of the reference values for the smallest three $\Delta t$s. For all the methods, the maximum lift coefficient appears to be converging to a value slightly higher than the reference value. MP-Pre-Post-4 did not finish for $\Delta t$s higher than 0.0025.
\pagebreak
{\small
\begin{longtable}
{| p{1.5cm}p{1.5cm}p{2cm}p{1.5cm}p{2cm}p{1.5cm}|} \hline
\multicolumn{6}{|c|}{IE based methods.\label{tab:ie}}\\ 
$\Delta t$ & $t(c_{d,\max})$ & $c_{d,\max}$ & $t(c_{l,\max})$ & $c_{l,\max}$
& $\Delta p(8)$ \\
\hline
\multicolumn{6}{|c|}{Reference Values}\\
--- & 3.93625 & 2.950921575 & 5.693125 & 0.47795 & -0.1116 \\
\hline\multicolumn{6}{|c|}{IE}\\
%0.04& 3.92& 2.951107353& 0.92& 0.00114& -0.1268 \\ 
%0.02& 3.94& 2.950626976& 0.92& 0.00118& -0.1265 \\ 
%0.01& 3.93& 2.950397498& 7.17& 0.02495& -0.1243 \\ 
0.005& 3.93& 2.950301672& 6.28& 0.17604& -0.1005 \\ 
0.0025& 3.9325& 2.950371110& 6.215& 0.30336& -0.1070 \\ 
0.00125& 3.93375& 2.950529384& 5.7175& 0.38229& -0.1114 \\ 
\hline\multicolumn{6}{|c|}{IE-Pre-2 (one IE solve and one filter per timestep)}\\
%0.04& 3.92& 2.950098901& 7.12& 0.00171& -0.1262 \\ 
%0.02& 3.94& 2.950208885& 6.72& 0.19876& -0.1104 \\ 
%0.01& 3.93& 2.950565511& 5.8& 0.39680& -0.0995 \\ 
0.005& 3.935& 2.950802171& 5.72& 0.45978& -0.1111 \\ 
0.0025& 3.935& 2.950872330& 5.7& 0.47413& -0.1120 \\ 
0.00125& 3.93625& 2.950889791& 5.695& 0.47728& -0.1117 \\ 
%\hline\multicolumn{6}{c}{IE-Filt($d=0$) (one IE solve and one filter per timestep)}\\
%0.04& 3.92& 2.950196386& 7.56& 0.00438& -0.1263 \\ 
%0.02& 3.94& 2.950249677& 6.14& 0.20571& -0.1115 \\ 
%0.01& 3.93& 2.950589343& 5.81& 0.40256& -0.0995 \\ 
%0.005& 3.935& 2.950808086& 5.72& 0.46093& -0.1112 \\ 
%0.0025& 3.935& 2.950873367& 5.7& 0.47432& -0.1120 \\ 
%0.00125& 3.93625& 2.950889970& 5.695& 0.47730& -0.1117 \\ 
\hline\multicolumn{6}{|c|}{IE-Filt($\frac{3-\sqrt{3}}{3}$) (one IE solve and two filters per timestep)}\\
%0.04& 3.88& 2.950536502& 7.04& 0.03771& -0.1266 \\
%0.02& 3.92& 2.950432568& 6& 0.29483& -0.1057 \\
%0.01& 3.92& 2.950694544& 5.77& 0.43052& -0.1039 \\
0.005& 3.93& 2.950839424& 5.71& 0.46722& -0.1127 \\
0.0025& 3.9325& 2.950880844& 5.695& 0.47567& -0.1124 \\
0.00125& 3.935& 2.950891744& 5.6925& 0.47762& -0.1120 \\

\hline\multicolumn{6}{|c|}{IE-Pre-Post-3 (one IE solve and two filter per timestep)}\\
%0.04& 4& 2.959334351& 5.84& 1.03001& -0.1108 \\ 
%0.02& ---& ---& ---& ---& --- \\ 
%0.01& 6.45& 186.1949536& 6.44& 158.80757& -0.0960 \\ 
0.005& 7.825& 435.1275230& 7.84& 205.33324& -5.1966 \\ 
0.0025& 3.935& 2.950897874& 5.6925& 0.47895& -0.1116 \\ 
0.00125& 3.93625& 2.950895596& 5.6925& 0.47833& -0.1116 \\ 
\hline\multicolumn{6}{|c|}{IE-EIS-3 (two IE solves and two filters per timestep)}\\
%0.08& 3.92& 2.950092624& 7.28& 0.00455& -0.1262 \\ 
%0.04& 3.92& 2.950132906& 6.58667& 0.16060& -0.1021 \\ 
%0.02& 3.93333& 2.950502415& 5.74& 0.37874& -0.1083 \\ 
0.01& 3.93667& 2.950816639& 5.69667& 0.46183& -0.1119 \\ 
0.005& 3.935& 2.950884378& 5.69333& 0.47608& -0.1117 \\ 
0.0025& 3.93667& 2.950893844& 5.6925& 0.47797& -0.1116 \\ \hline
\caption{For the smallest stepsize, the lift calculated by IE does not have any digits agreement for the smallest $\Delta t$, but the methods based on it have at least two digits of accuracy. IE-EIS-3 has the best agreement with the reference lift and pressure drop values for the smallest $\Delta t$. IE-Pre-Post-3 is unstable and/or did not finish for several stepsizes. } % needs to go inside longtable environment
\label{tab:myfirstlongtable2}
\end{longtable}
}

{\small
\begin{longtable}{ |p{1.5cm}p{1.5cm}p{2cm}p{1.5cm}p{2cm}p{1.5cm}|} 
\hline\multicolumn{6}{|c|}{BDF2 based methods.}\\
$\Delta t$ & $t(c_{d,\max})$ & $c_{d,\max}$ & $t(c_{l,\max})$ & $c_{l,\max}$
& $\Delta p(8)$ \\
\hline
\multicolumn{6}{|c|}{Reference Values}\\
--- & 3.93625 & 2.950921575 & 5.693125 & 0.47795 & -0.1116 \\
\hline\multicolumn{6}{|c|}{BDF2}\\
%0.04& 3.92& 2.950130377& 7.12& 0.11109& -0.1069 \\ 
0.02& 3.94& 2.950423752& 5.86& 0.34749& -0.1063 \\ 
%0.01& 3.93& 2.950740946& 5.74& 0.44752& -0.1088 \\ 
0.005& 3.935& 2.950858401& 5.705& 0.47141& -0.1120 \\ 
%0.0025& 3.935& 2.950886006& 5.695& 0.47667& -0.1118 \\ 
0.00125& 3.93625& 2.950893074& 5.69375& 0.47787& -0.1117 \\ 
\hline\multicolumn{6}{|c|}{BDF2-Post-3 (one BDF2 solve and one filter per timestep)}\\
%0.04& 3.96& 2.953741230& 5.56& 0.72189& -0.1117 \\ 
0.02& 3.94& 2.951551390& 5.7& 0.56819& -0.1119 \\ 
%0.01& 3.94& 2.950974692& 5.69& 0.49539& -0.1116 \\ 
0.005& 3.935& 2.950905566& 5.695& 0.48028& -0.1116 \\ 
%0.0025& 3.935& 2.950896048& 5.6925& 0.47855& -0.1116 \\ 
0.00125& 3.93625& 2.950895363& 5.6925& 0.47828& -0.1116 \\ 
\hline\multicolumn{6}{|c|}{BDF2-Pre-Post-3 (one BDF2 solve and two filters per timestep)}\\
%0.04& 3.92& 2.950155984& 7.16& 0.13041& -0.1070 \\
0.02& 3.94& 2.950855922& 5.86& 0.44850& -0.1028 \\
%0.01& 3.94& 2.951073331& 5.71& 0.50778& -0.1120 \\
0.005& 3.935& 2.950932030& 5.695& 0.48538& -0.1116 \\
%0.0025& 3.935& 2.950900732& 5.6925& 0.47945& -0.1116 \\ 
0.00125& 3.93625& 2.950896341& 5.6925& 0.47845& -0.1116 \\ \hline
\caption{BDF2-Post-3 and BDF2-Pre-Post-3 have better agreement with the reference pressure drop than BDF2. BDF2-Post-3 tends to overestimate the lift coefficient for larger stepsizes.} % needs to go inside longtable environment
\label{tab:bdf2}
\end{longtable}
}

{\small
\begin{longtable}{| p{1.5cm}p{1.5cm}p{2cm}p{1.5cm}p{2cm}p{1.5cm}|} \hline
\multicolumn{6}{|c|}{Methods based on the implicit midpoint rule}\\
$\Delta t$ & $t(c_{d,\max})$ & $c_{d,\max}$ & $t(c_{l,\max})$ & $c_{l,\max}$
& $\Delta p(8)$ \\
\hline
\multicolumn{6}{|c|}{Reference Values}\\
--- & 3.93625 & 2.950921575 & 5.693125 & 0.47795 & $-$0.1116 \\
\hline\multicolumn{6}{|c|}{MP}\\
%0.04& 3.92& 2.950856468& 5.88& 0.43697& -0.1063 \\ 
%0.02& 3.94& 2.950892841& 5.74& 0.47029& -0.1090 \\ 
%0.01& 3.93& 2.950881195& 5.7& 0.47565& -0.1120 \\ 
0.005& 3.935& 2.950893884& 5.695& 0.47780& -0.1118 \\ 
0.0025& 3.9375& 2.950894329& 5.6925& 0.47809& -0.1117 \\ 
0.00125& 3.93625& 2.950895120& 5.6925& 0.47821& -0.1116 \\  
\hline\multicolumn{6}{|c|}{MP-Pre-Post-2 (one MP solve and two filters per timestep)}\\
%0.04& 3.92& 2.950511215& 5.88& 0.39388& -0.1060 \\ 
%0.02& 3.94& 2.950774830& 5.74& 0.45610& -0.1090 \\ 
%0.01& 3.94& 2.950857478& 5.7& 0.47185& -0.1120 \\ 
0.005& 3.935& 2.950886785& 5.695& 0.47683& -0.1118 \\ 
0.0025& 3.935& 2.950892638& 5.6925& 0.47785& -0.1117 \\ 
0.00125& 3.93625& 2.950894681& 5.6925& 0.47815& -0.1116 \\ 
\hline\multicolumn{6}{|c|}{MP-Pre-Post-3 (one MP solve and two filters per timestep)}\\
%0.04& 3.96& 2.953543662& 5.28& 0.67629& -0.1035 \\ 
%0.02& 3.94& 2.951062762& 5.68& 0.51086& -0.1112 \\ 
%0.01& 3.94& 2.950908296& 5.69& 0.48191& -0.1116 \\ 
0.005& 3.935& 2.950896750& 5.695& 0.47840& -0.1116 \\ 
0.0025& 3.935& 2.950894895& 5.6925& 0.47830& -0.1116 \\ 
0.00125& 3.93625& 2.950895215& 5.6925& 0.47825& -0.1116 \\ 
\hline\multicolumn{6}{|c|}{MP-Pre-Post-4 (one MP solve and two filters per timestep)}\\
%0.04& 4.04& 2.956815830& 7.6& 4.93431& 0.4102 \\ 
%0.02& ---& ---& ---& ---& --- \\ 
%0.01& ---& ---& ---& ---& --- \\ 
0.005& ---& ---& ---& ---& --- \\ 
0.0025& 3.935& 2.950894654& 5.6925& 0.47824& -0.1116 \\ 
0.00125& 3.93625& 2.950895183& 5.6925& 0.47824& -0.1116 \\  \hline
\caption{MP-Pre-Post-3 has exact agreement with the reference pressure drop for the smallest three stepsizes. MP and MP-Pre-Post-2 give nearly identical values.  MP-Pre-Post-4 was unstable for $\Delta t$ larger than $0.0025$.} % needs to go inside longtable environment
\label{tab:mp}
\end{longtable}
}

\subsection{Offset cylinder test\label{sec:test_offset}}

We now consider a body forced internal flow between two offset cylinders
inspired by e.g. \cite{EP00}. This flow is transitional between periodic and
turbulent. Similar tests have been used in, e.g., \cite{J15}. The domain is
a unit cylinder centered at the origin, minus a smaller cylinder of radius
0.1 centered at $(0,0.5)$. The flow is forced by

\begin{equation}
f(t,x,y) = 4\min(1,t)(1-x^2 - y^2) \left<-y , x \right>.  \notag
\end{equation}
The kinematic viscosity is set to $\nu = \frac{1}{150}$. The flow is started
at rest with an initial condition is $u \equiv 0$. The simulation is run
from $t \in [0, 30]$. We first obtain accurate reference data using the
implicit midpoint rule for several mesh refinements and step sizes, with a
fine mesh refinement of 255,870 degrees of freedom using $(\mathcal{P}_2^2, 
\mathcal{P}_1^1)$ elements, and a smallest stepsize of $\Delta t = 0.000625$%
. We capture the evolution of the kinetic energy, several snapshots, and
probe each component of velocity at several points.

For timesteps where the base methods fail to capture the correct dynamics
the filtered methods capture the reference solution better. Solution
snapshots for IE based methods are shown in Figure \ref{fig:ie_offset_cylinder}, and the snapshots for BDF2 and MP based methods are shown in Figure \ref{fig:bdf2_offset_cylinder}.

For a quantitative comparison, we measure the error of the $x$-component of
the velocity vector at the point $(0.5, 0.2)$, which is in the wake of the
obstacle. Since the solution is quasi-periodic, we normalize the error by
the difference between the maximum and minimum values taken on by the
reference solution. To calculate the error, let $u = u(t,x,y)$ be the
reference solution and $u_h(t,x,y)$ is the discrete solution, which has been
linearly interpolated in time. The absolute and relative error at time $t$ is defined as follows:
%\begin{equation}
%|u(t,0.5,0.2) - u_h(t,0.5,0.2)| \quad  \text{error} = \frac{|u(t,0.5,0.2) - u_h(t,0.5,0.2)|}{\sup_tu(t,0.5,0.2) -
%\inf_t u(t,0.5,0.2)}. \notag
%\end{equation}
\begin{align*}
\text{absolute error}(t) &= \sup_{s \in [0,t]}|u(s,0.5,0.2) - u_h(s,0.5,0.2)|,
\\
\text{relative error}(t) &= \frac{\text{absolute error(t)}}{\sup_{s \in [0,T]}u(s,0.5,0.2) -
\inf_{s \in [0,T]} u(t,0.5,0.2)}.
\end{align*}
The reference $x$-component of the velocity is shown in Figure \ref%
{fig:x-component-reference}. The errors of the IE, BDF2, and MP based
methods are given in Figures \ref{fig:ie-x-component}, \ref%
{fig:bdf2-x-component}, and \ref{fig:mp-x-component}.

IE-Pre-Post-3, MP-Pre-Post-3 and MP-Pre-Post-4 were unstable for some or all
of the stepsizes shown. The solution exhibits the richness of scales typical
for higher Reynolds number flows. One consequence may be increased
importance of A-stability and instabilities seen in methods that are only $%
A(\alpha )$ stable. \emph{We have derived the methods herein by optimization of
accuracy. Deriving methods by optimizing stability instead can be done using
the tools herein.} Doing so is an important open question at higher Reynolds
numbers.

\begin{figure}[th]
\centering
\begin{subfigure}{0.49\linewidth}
   \centering
   \includegraphics[width = 1\linewidth]{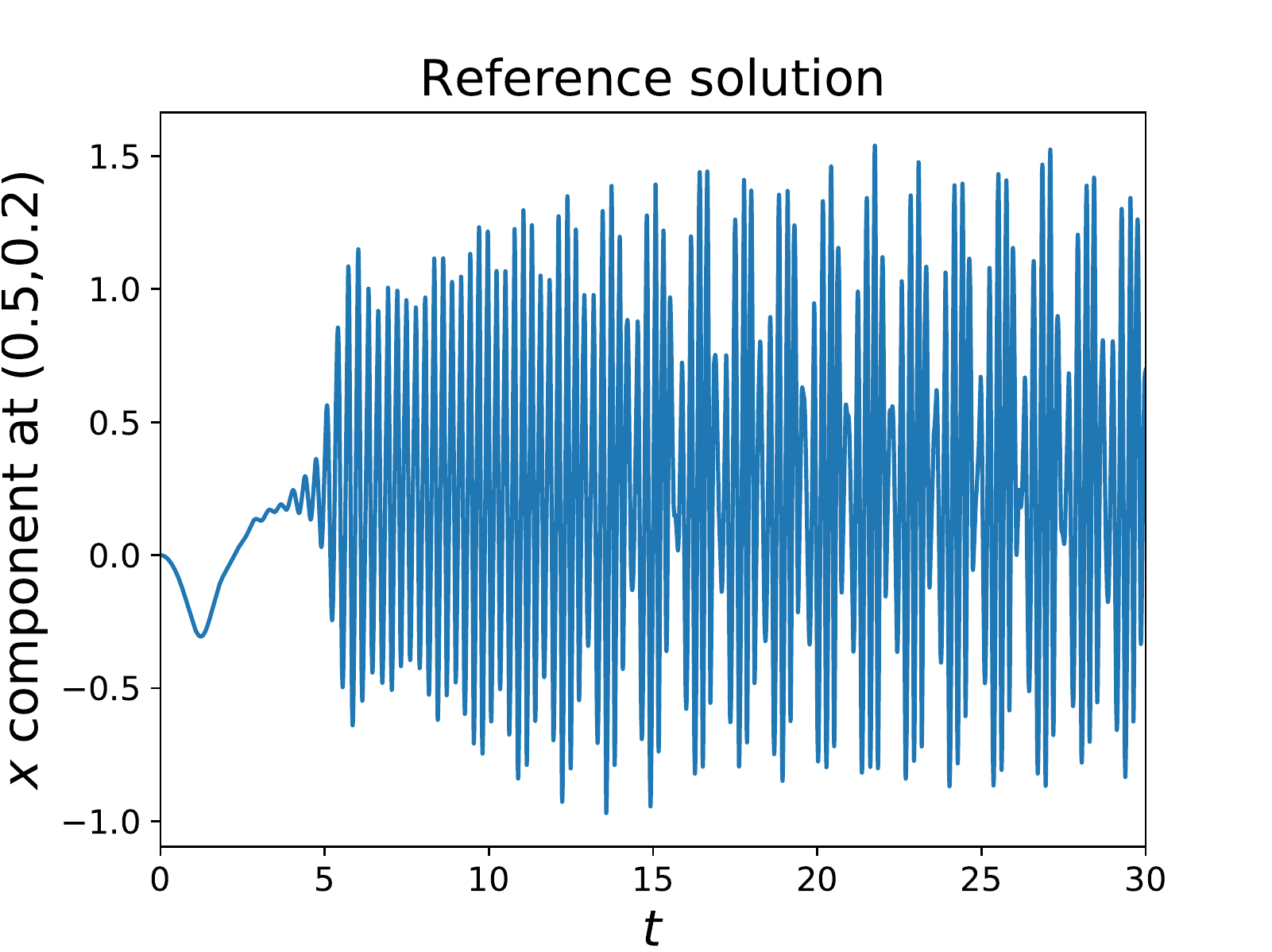}
\end{subfigure}
\caption{The $x$ component of the reference solution shows quasi-periodic
behavior.}
\label{fig:x-component-reference}
\end{figure}
\begin{figure}[th]
\begin{subfigure}{0.49\linewidth}
   \centering
   \includegraphics[width = 1\linewidth]{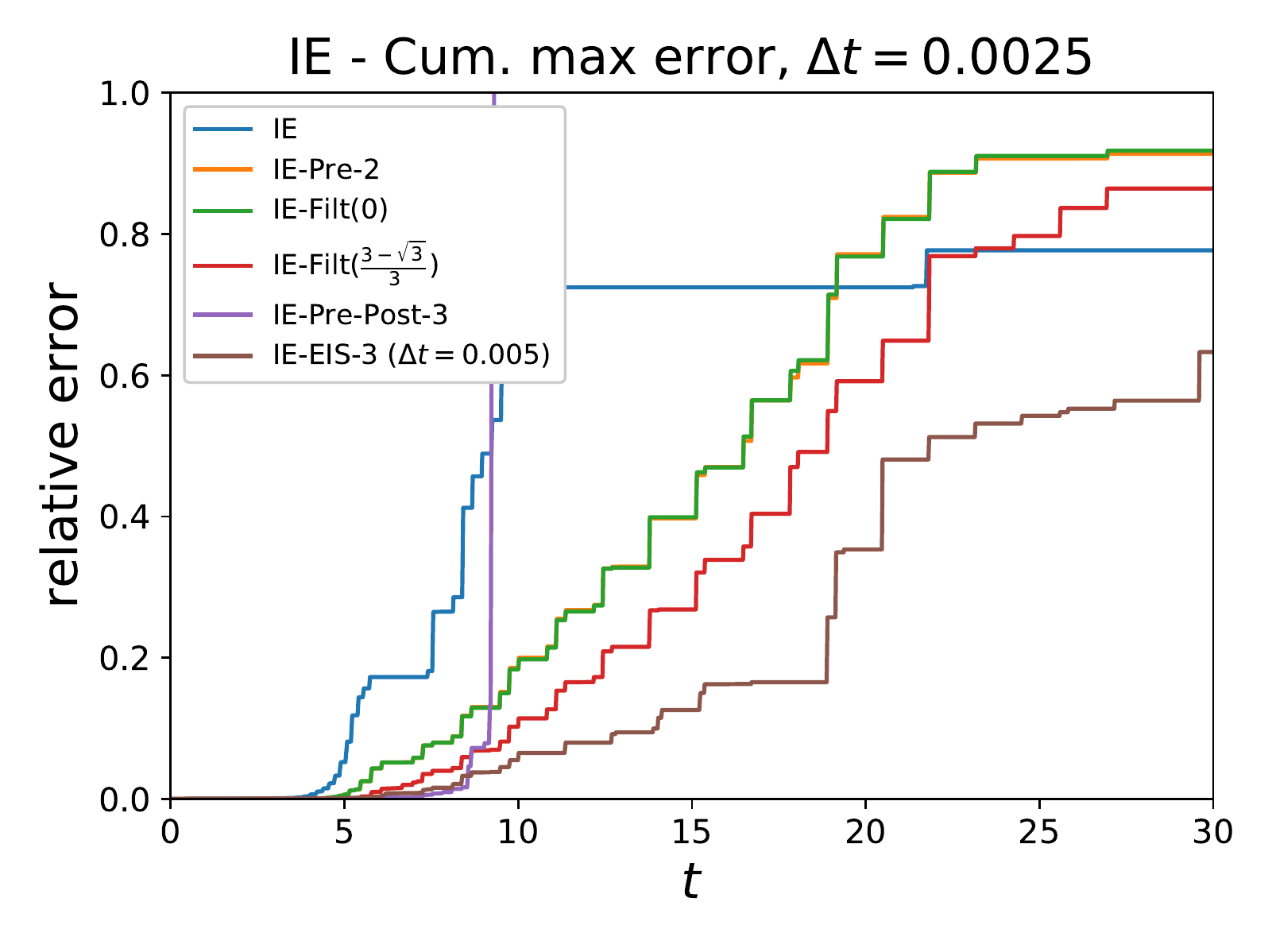}
\end{subfigure}
\begin{subfigure}{0.49\linewidth}
   \centering
   \includegraphics[width = 1\linewidth]{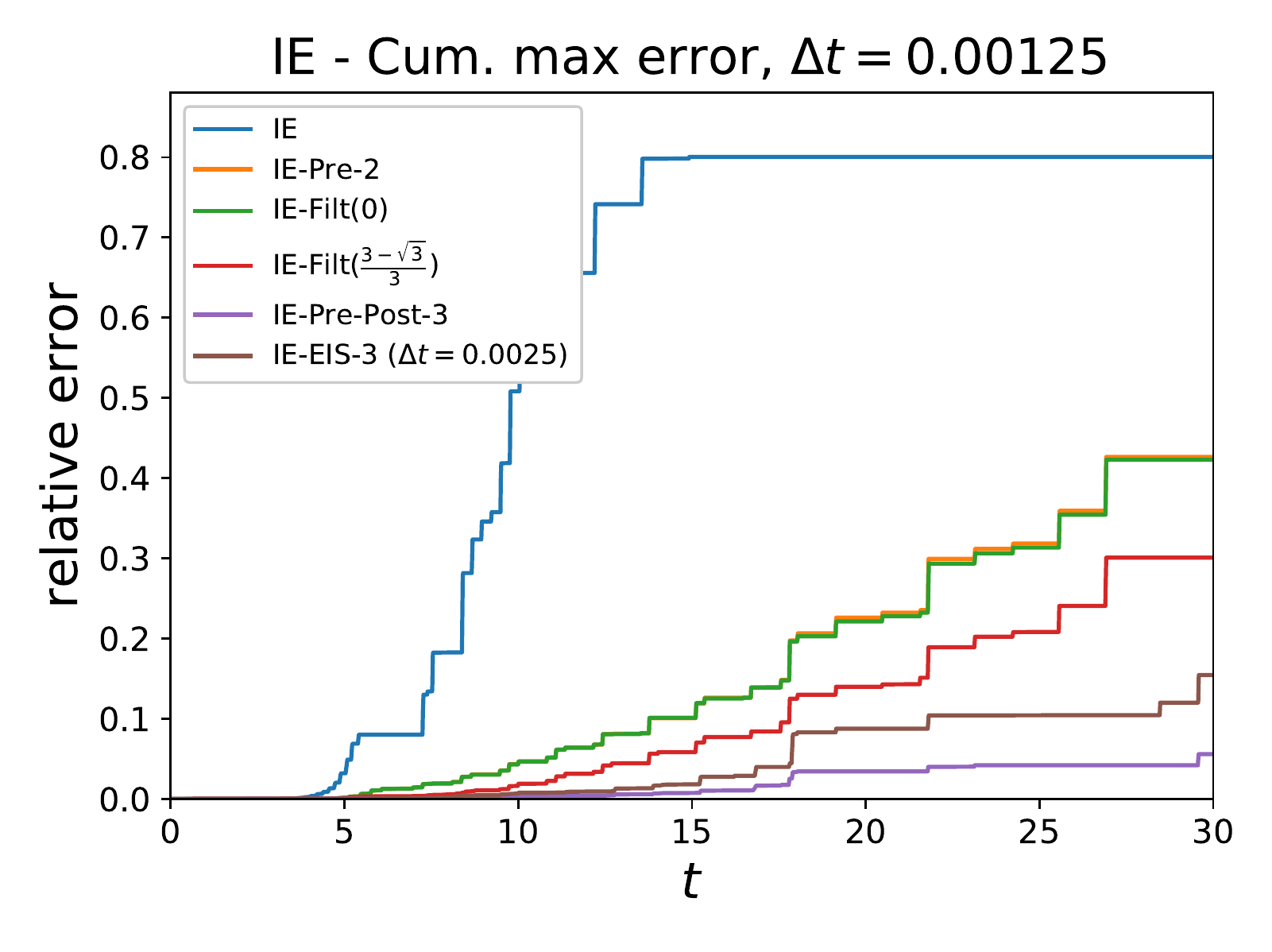}
\end{subfigure}
\caption{For $\Delta t = 0.0025$, IE-Pre-Post-3 is unstable while IE-EIS-3
performs the best. For $\Delta t = 0.00125$, IE-Pre-Post-3 becomes stable and
is the most accurate solution.}
\label{fig:ie-x-component}
\end{figure}
\begin{figure}[th]
\begin{subfigure}{0.49\linewidth}
   \centering
   \includegraphics[width = 1\linewidth]{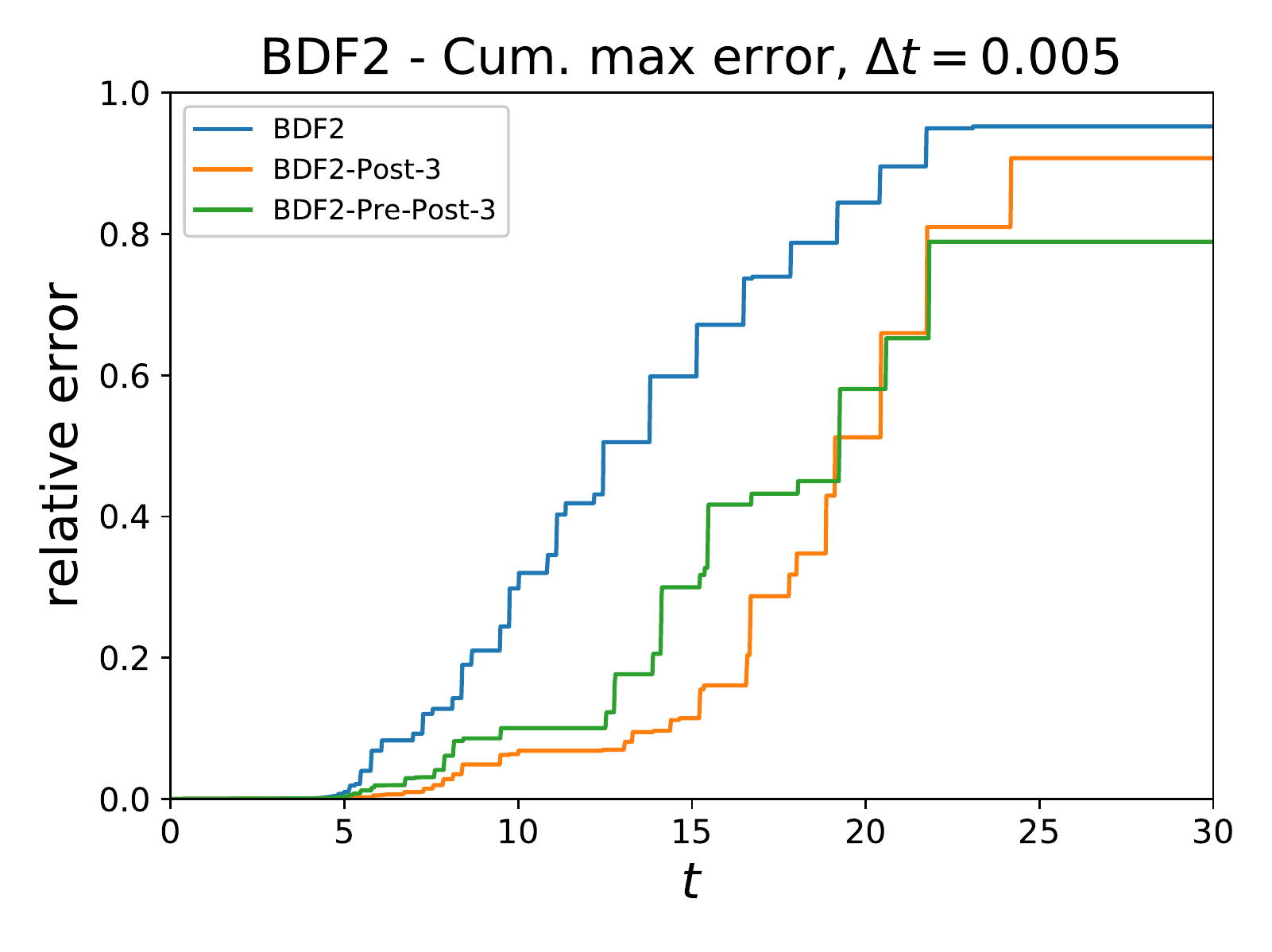}
\end{subfigure}
\begin{subfigure}{0.49\linewidth}
   \centering
   \includegraphics[width = 1\linewidth]{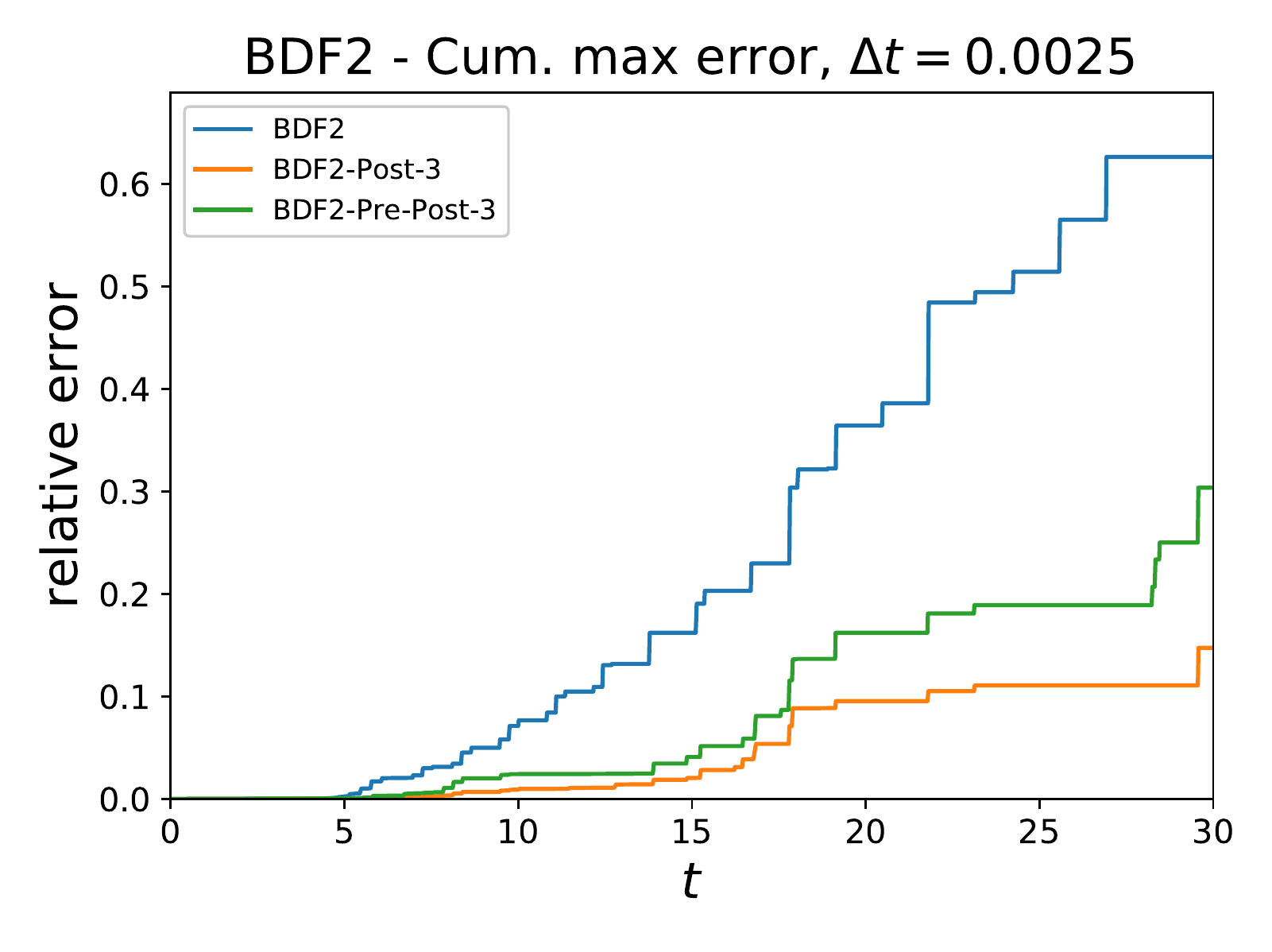}
\end{subfigure}
\caption{The third order methods are more accurate than their base method,
BDF2.}
\label{fig:bdf2-x-component}
\end{figure}
\begin{figure}[th]
\begin{subfigure}{0.49\linewidth}
   \centering
   \includegraphics[width = 1\linewidth]{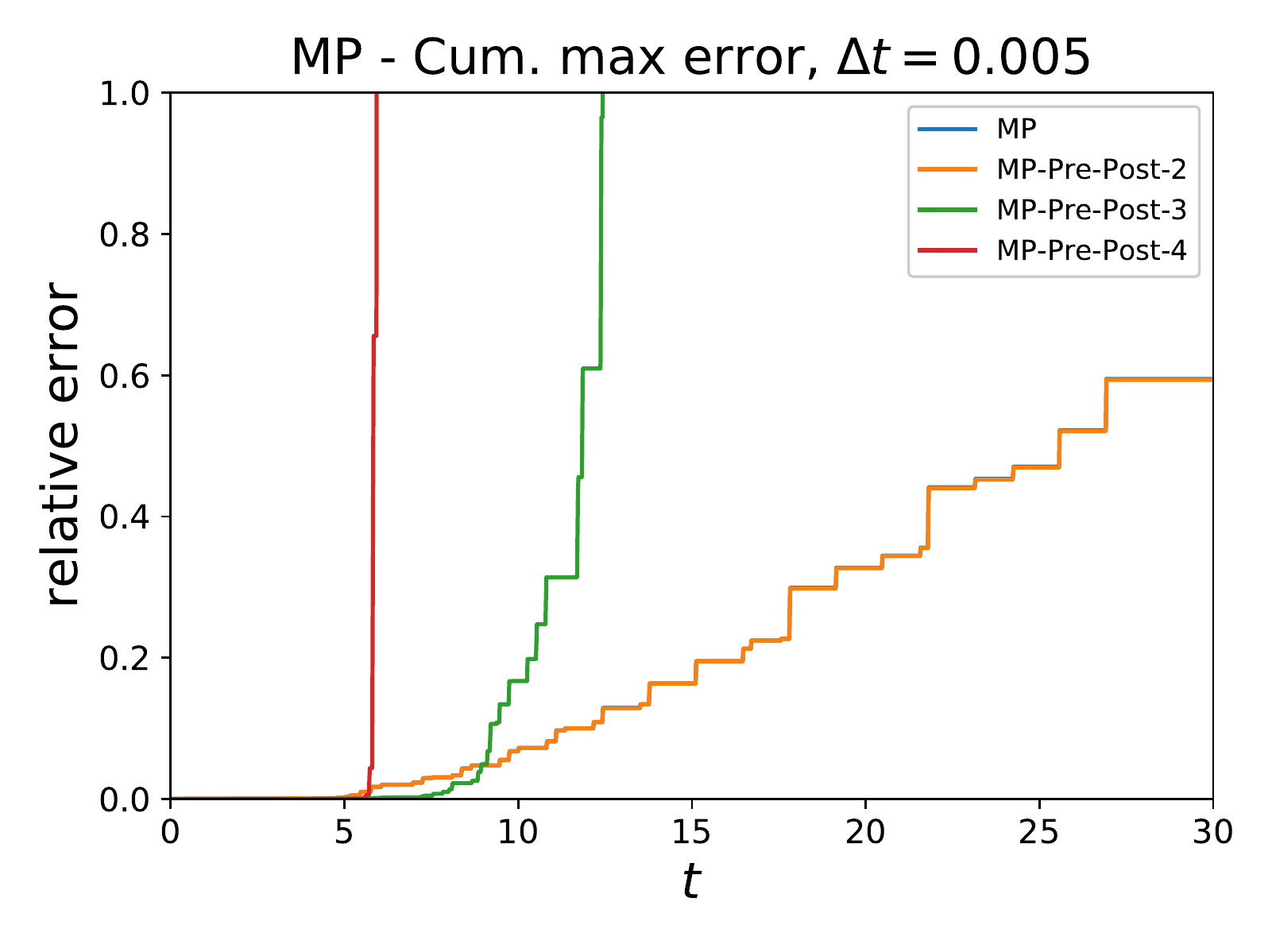}
\end{subfigure}
\begin{subfigure}{0.49\linewidth}
   \centering
   \includegraphics[width = 1\linewidth]{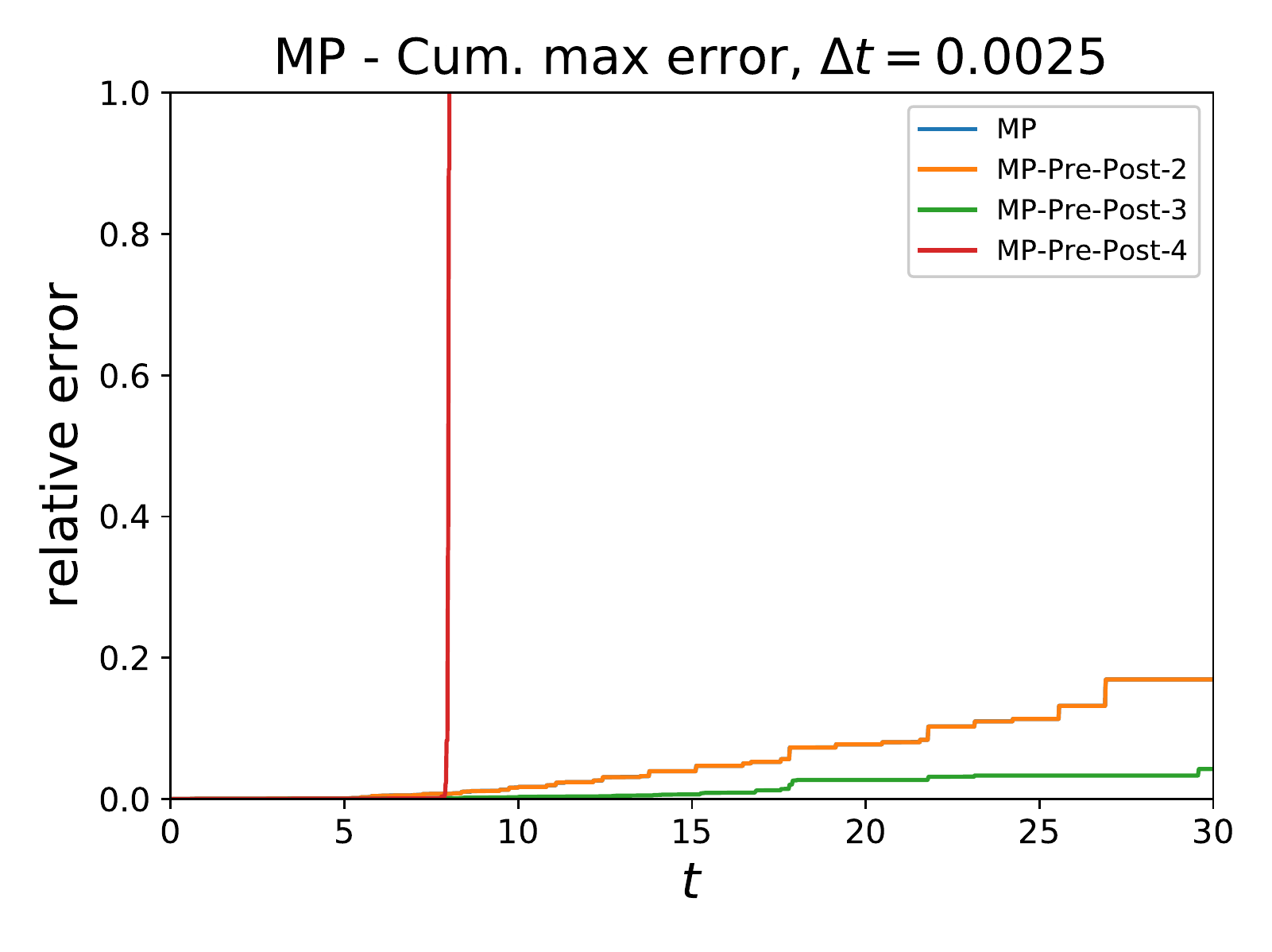}
\end{subfigure}
\par

\caption{For both $\Delta t$s, MP and MP-Pre-Post-2 give essentially the
same answer and MP-Pre-Post-4 is unstable. For $\Delta t = 0.0025$,
MP-Pre-Post-3 is the most accurate, but is unstable when $\Delta t$ is
doubled. IE-Filt($\frac{3-\sqrt{3}}{3}$) performs the best of the second order methods. IE-Pre-2 and IE-Filt(0) are essentially the same.}
\label{fig:mp-x-component}
\end{figure}

\begin{figure}[tbp]
\centering
\begin{tabular}{cM{20mm}M{20mm}M{20mm}M{20mm}M{20mm}}
           \toprule
            $t$ & Ref. & IE & IE-Pre-2 & IE-Filt($\frac{3-\sqrt{3}}{3}$) &IE-EIS-3 \\
            \midrule
            10 & \includegraphics[width = 1\linewidth]{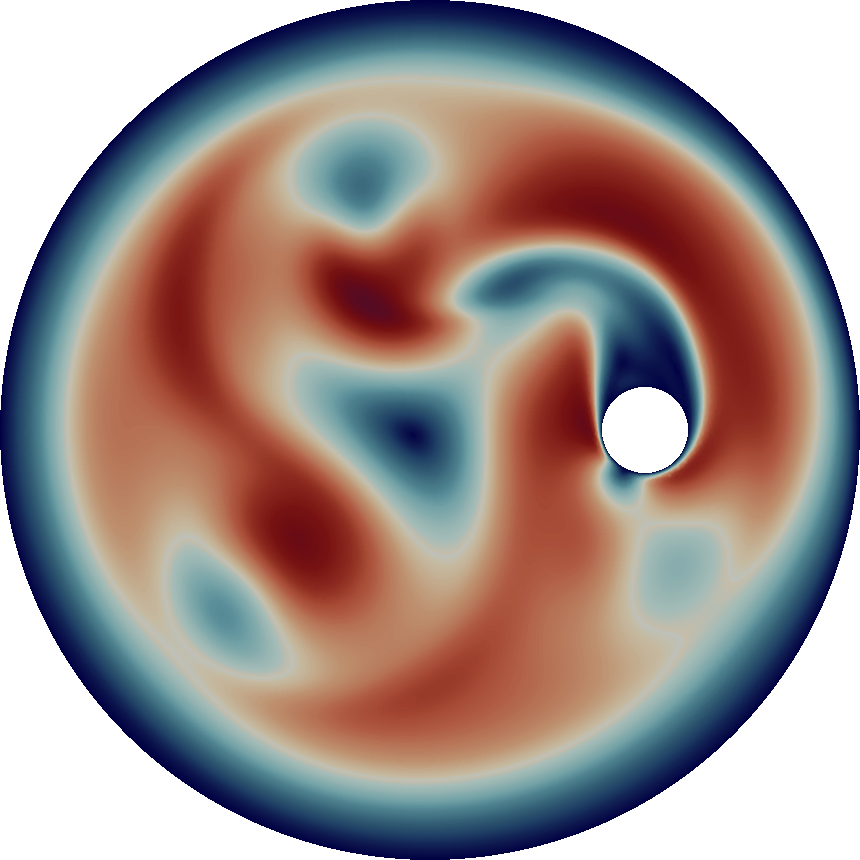} & \includegraphics[width = 1\linewidth]{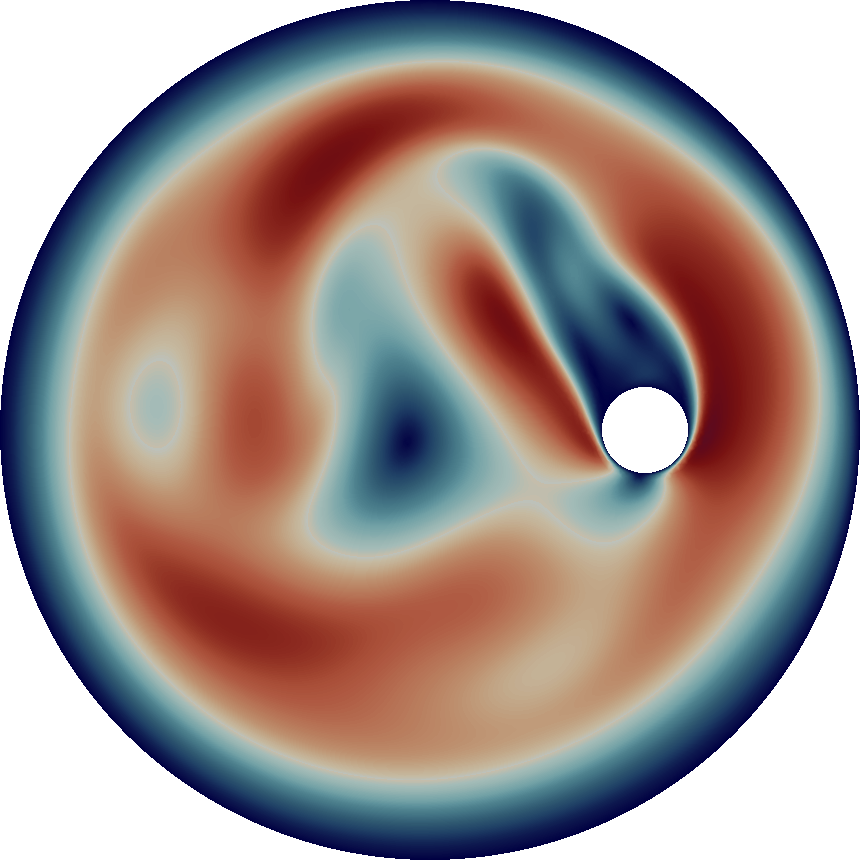} & \includegraphics[width = 1\linewidth]{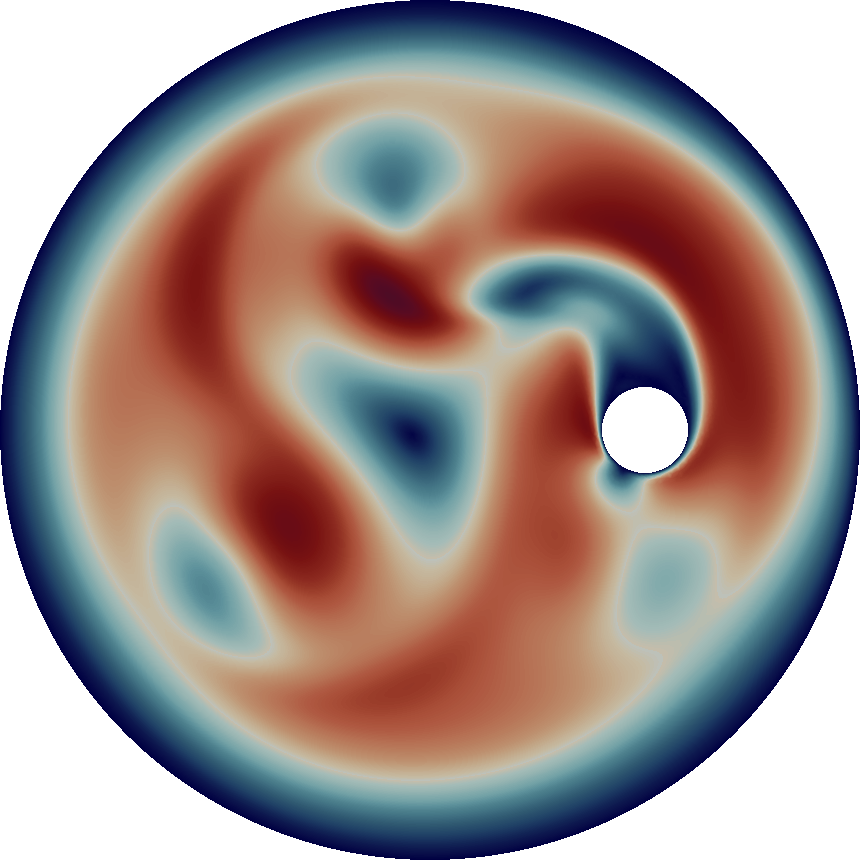} & \includegraphics[width = 1\linewidth]{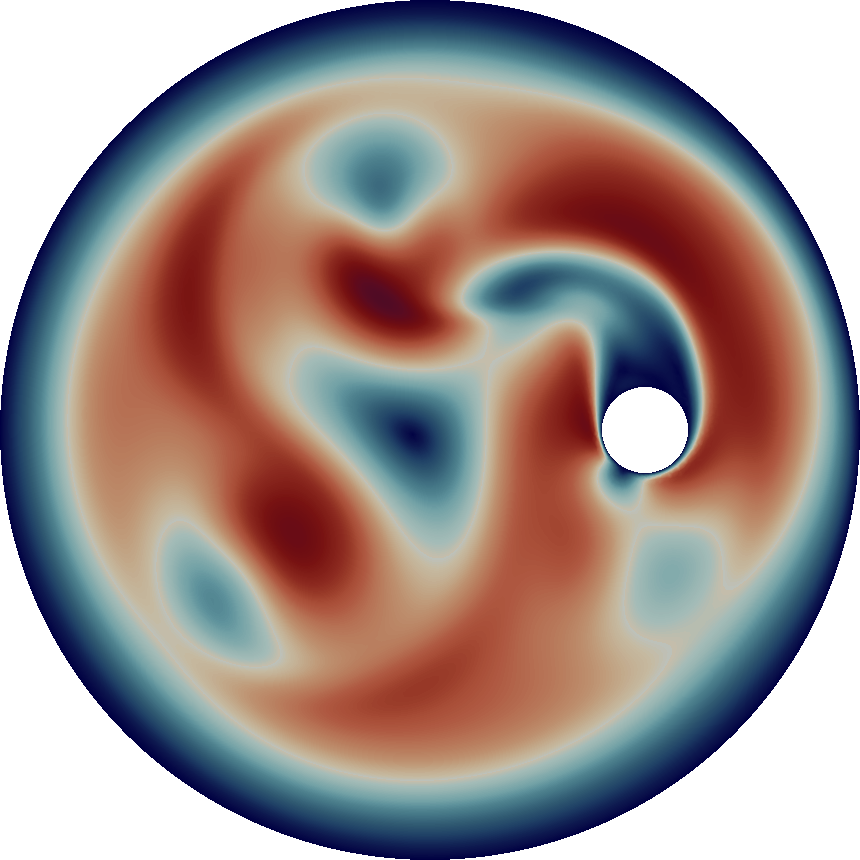} & \includegraphics[width = 1\linewidth]{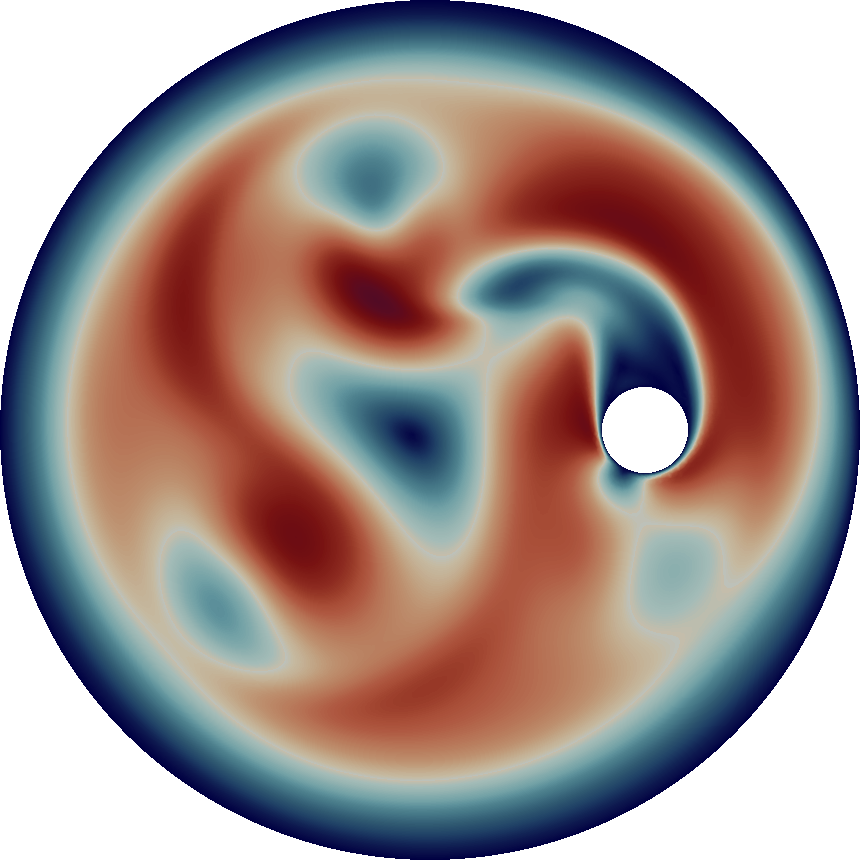}\\
            20 & \includegraphics[width = 1\linewidth]{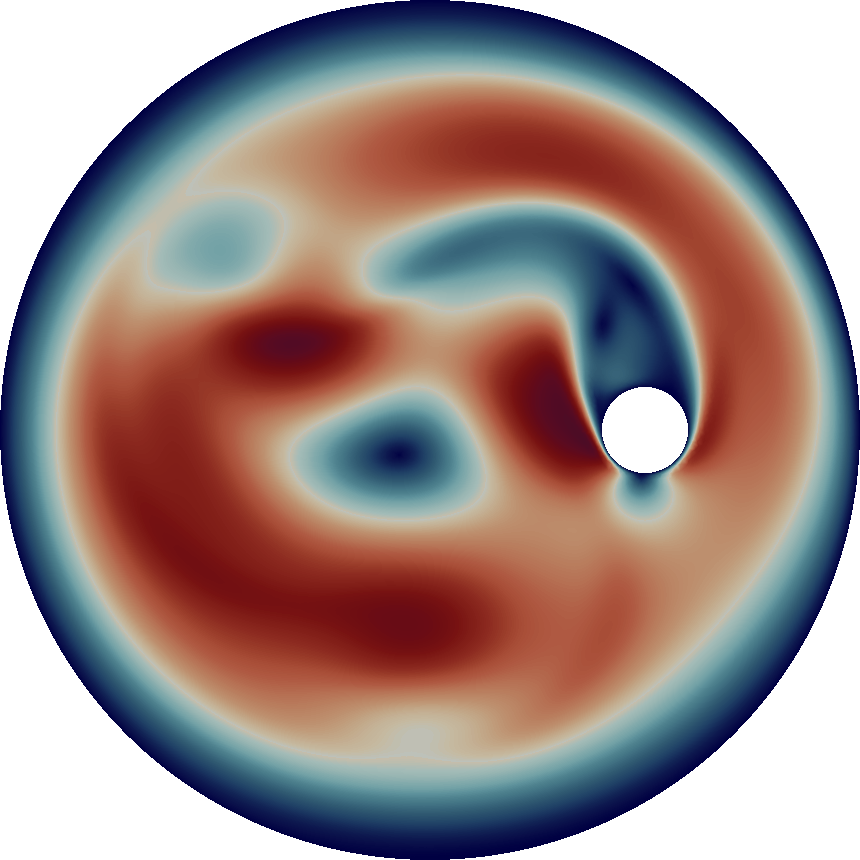} & \includegraphics[width = 1\linewidth]{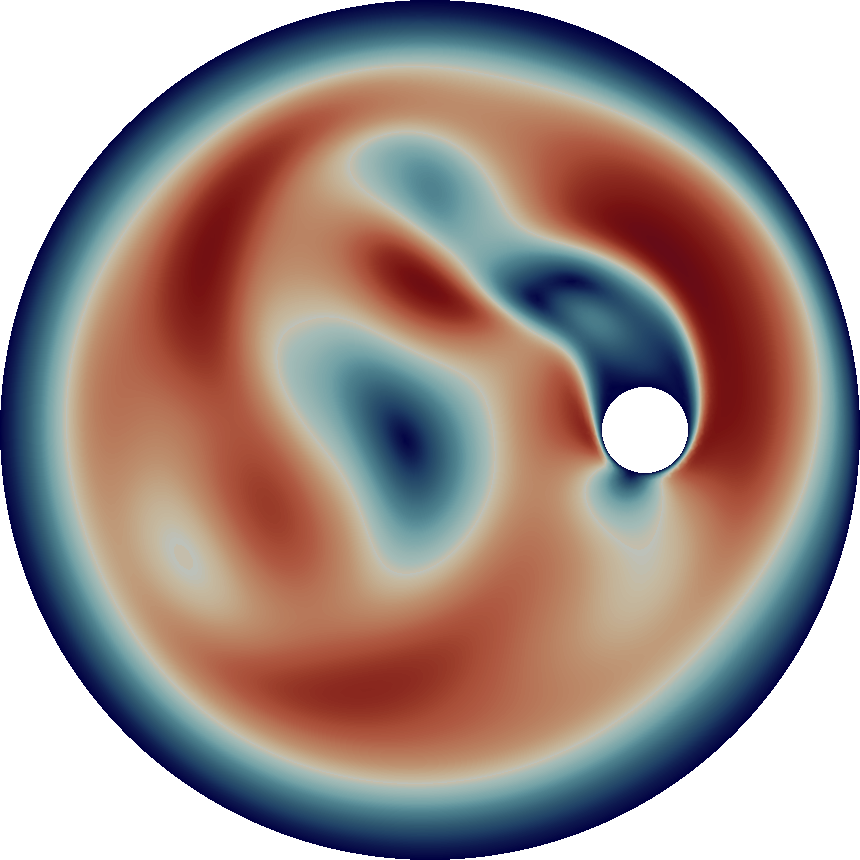} & \includegraphics[width = 1\linewidth]{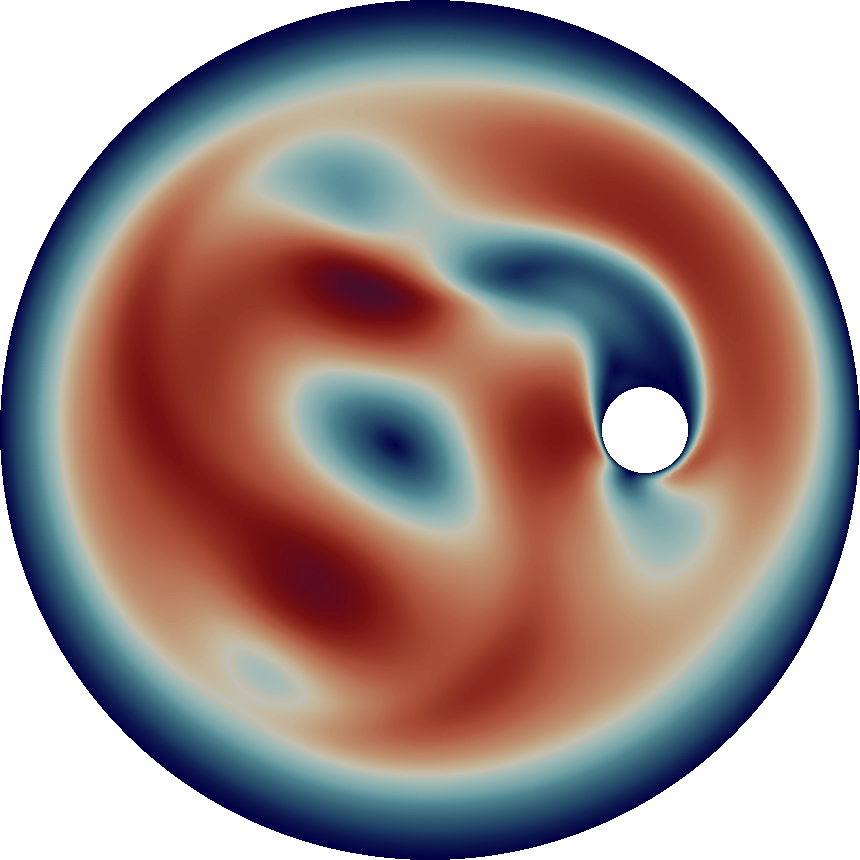} & \includegraphics[width = 1\linewidth]{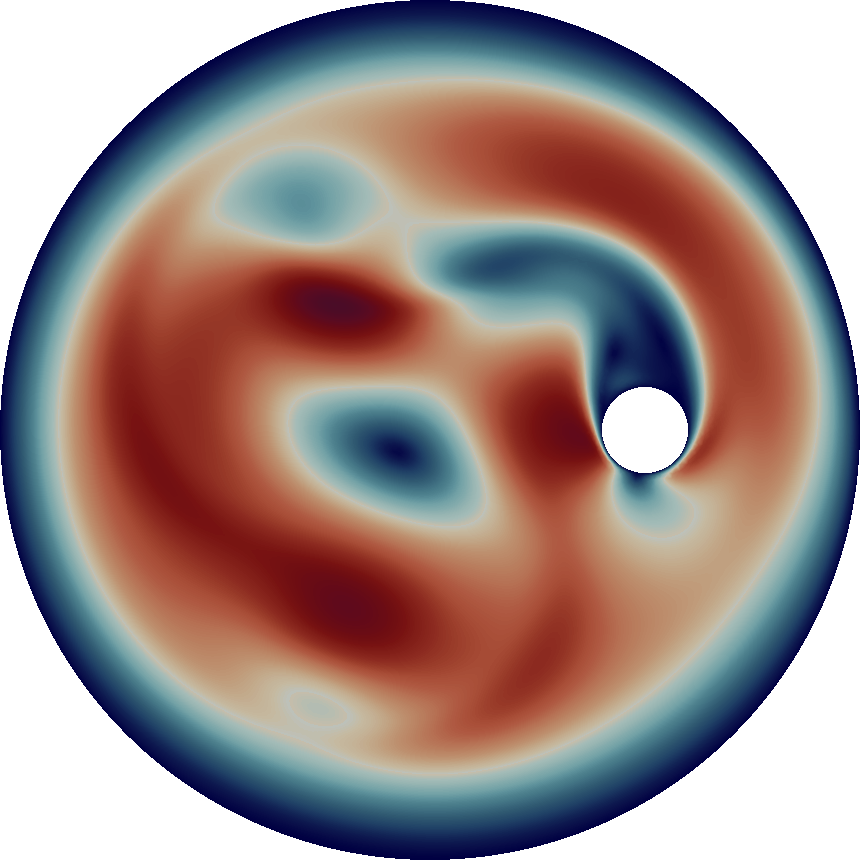} & \includegraphics[width = 1\linewidth]{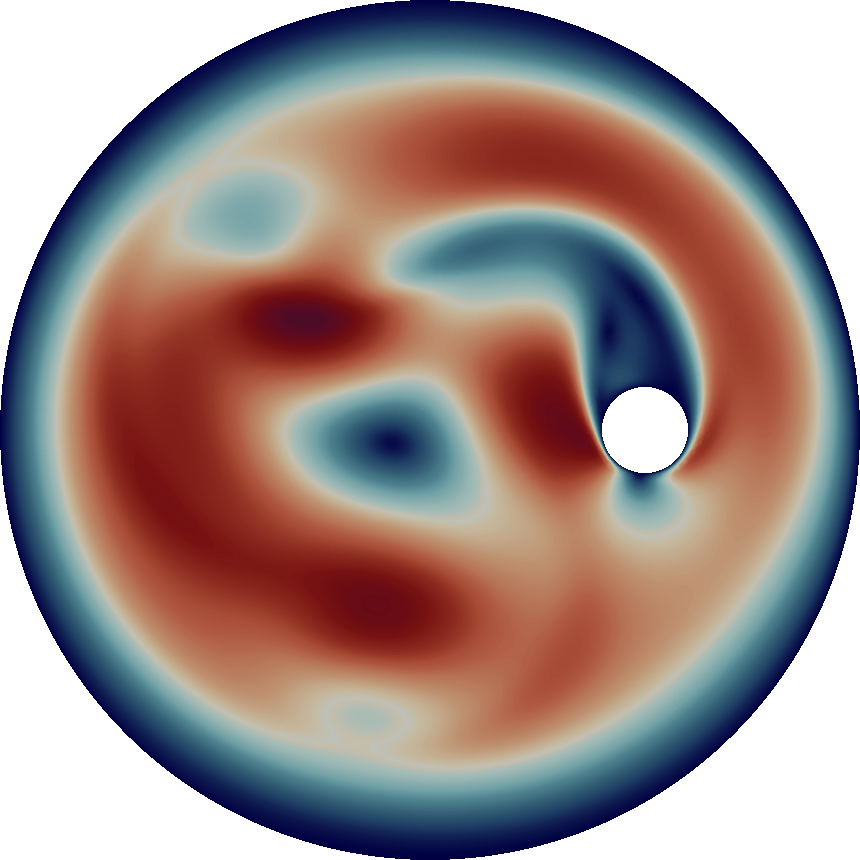} \\
            30 & \includegraphics[width = 1\linewidth]{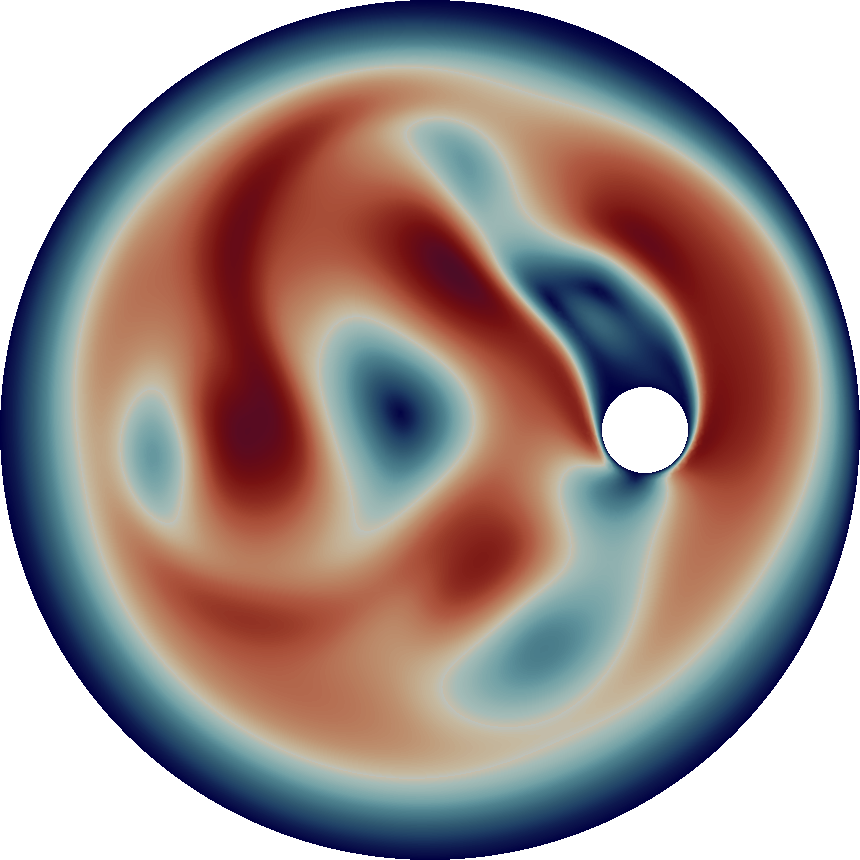} & \includegraphics[width = 1\linewidth]{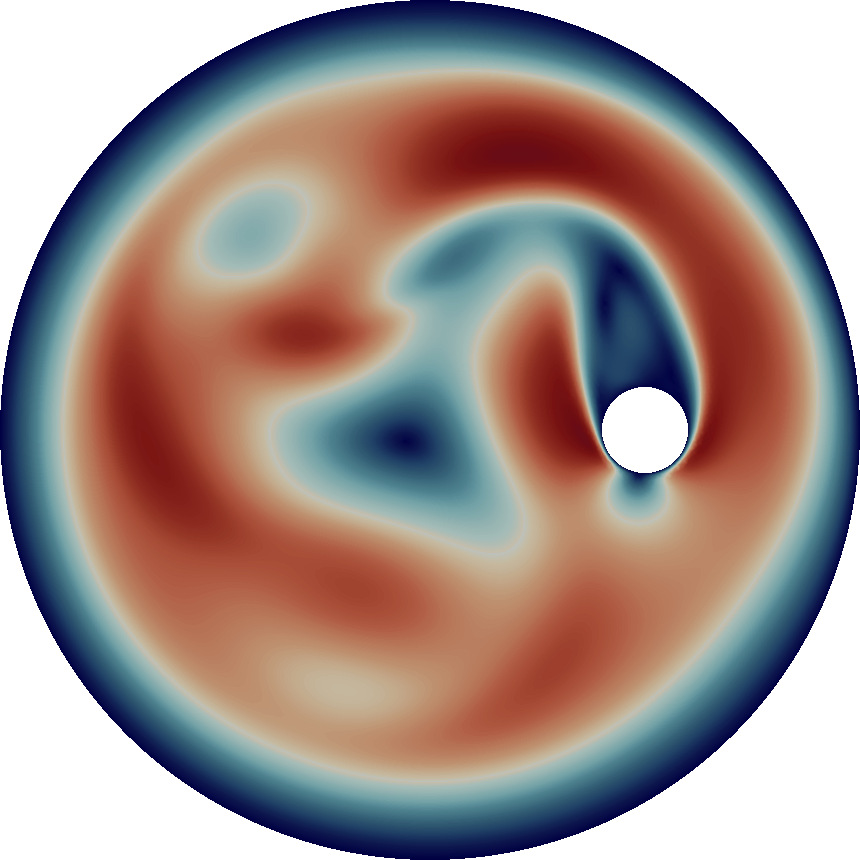} & \includegraphics[width = 1\linewidth]{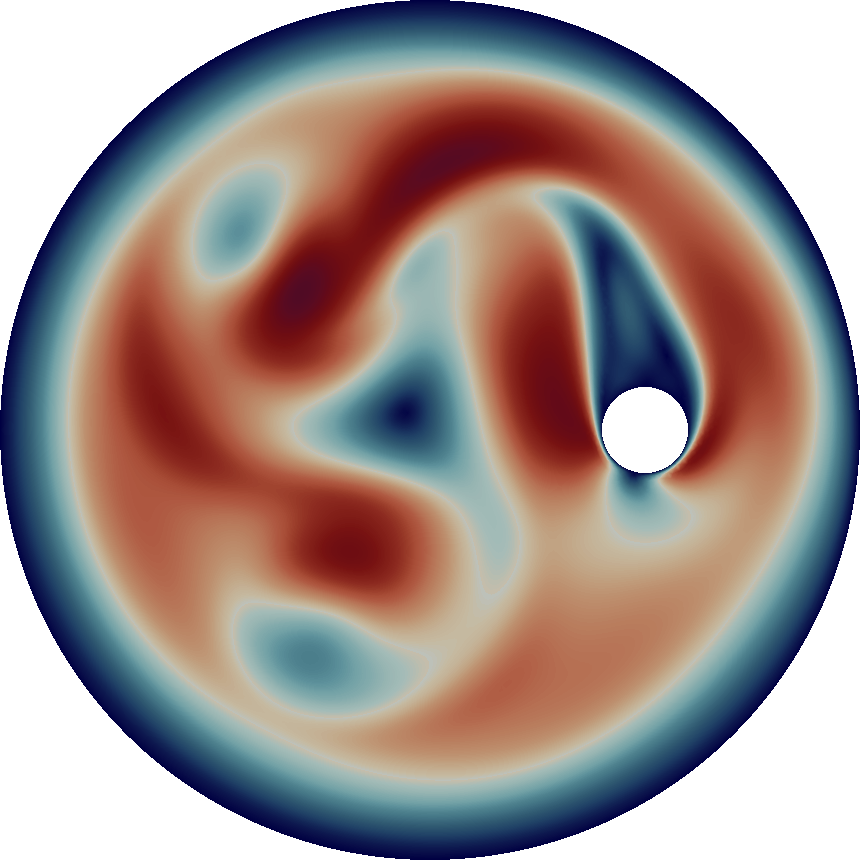} & \includegraphics[width = 1\linewidth]{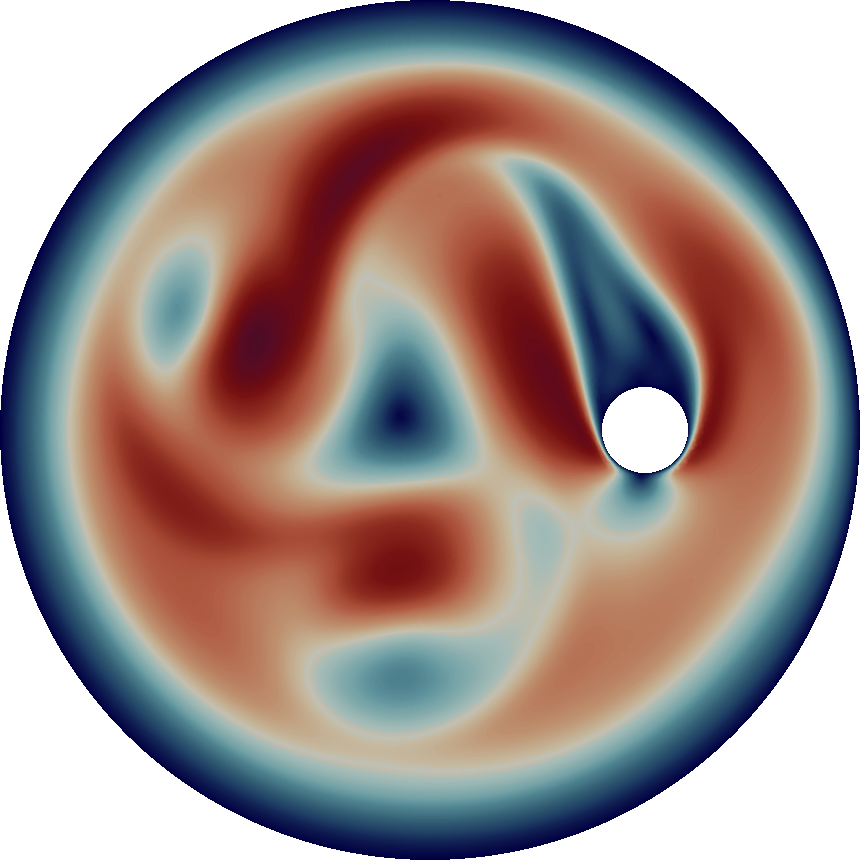} & \includegraphics[width = 1\linewidth]{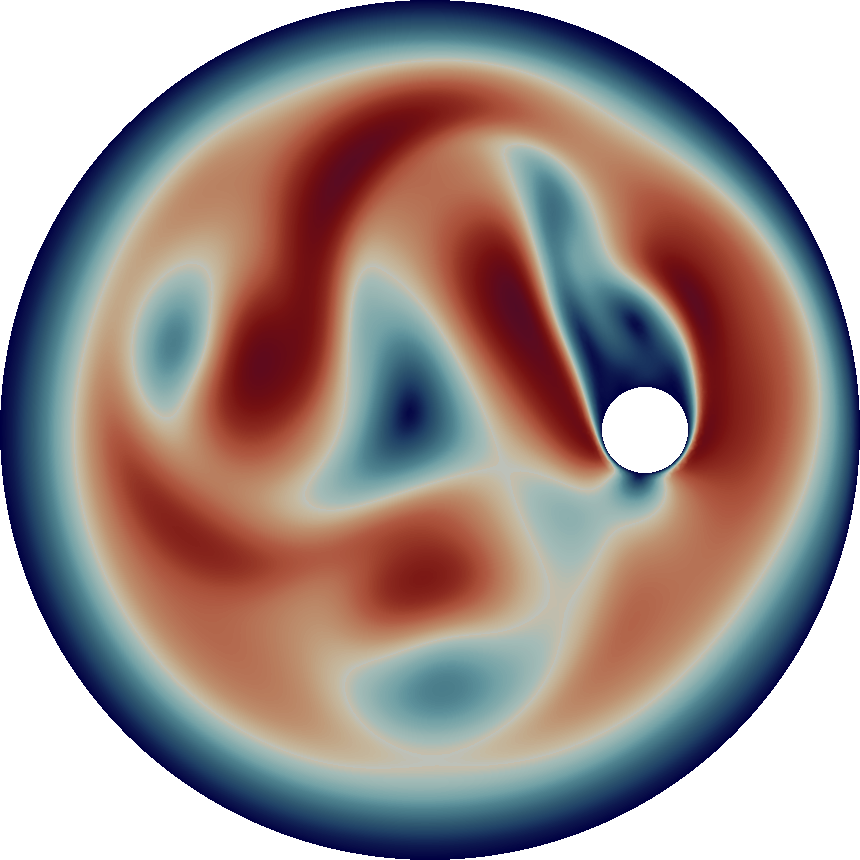}\\
            \bottomrule
        \end{tabular}
\caption{Snapshots of the IE{} based methods with $\Delta t = 0.0025$ (and $%
\Delta t = 0.005$ for IE-EIS-3). All methods showed improvement over IE.
IE-EIS-3{} gave the closest results to the reference solution, as observed,
by smaller phase error at $t = 30$ (for example, compare the detached vortex
shown by IE-EIS-3 versus the attached vortex in the second order methods.).
The IE-Pre-Post-3{} simulation failed before the first snapshot could be
generated. }
\label{fig:ie_offset_cylinder}
\end{figure}

%BDF2
\begin{figure}[tbp]
\centering
\begin{tabular}{cM{20mm}M{20mm}M{20mm}M{20mm}M{20mm}M{20mm}}
           \toprule
            $t$ & Ref. & BDF2 & BDF2-Post-3 & BDF2-Pre-Post-3 & MP-Pre-Post-2 & MP-Pre-Post-3  \\
            \midrule
            10 & \includegraphics[width = 1\linewidth]{figures/t-10.png} & \includegraphics[width = 1\linewidth]{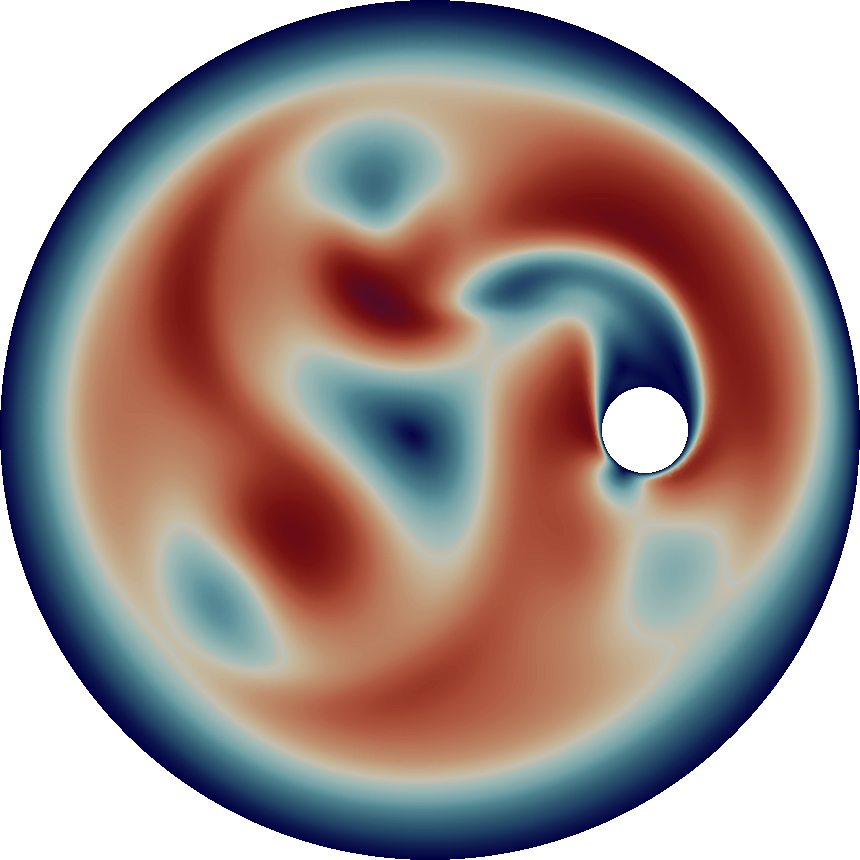} & \includegraphics[width = 1\linewidth]{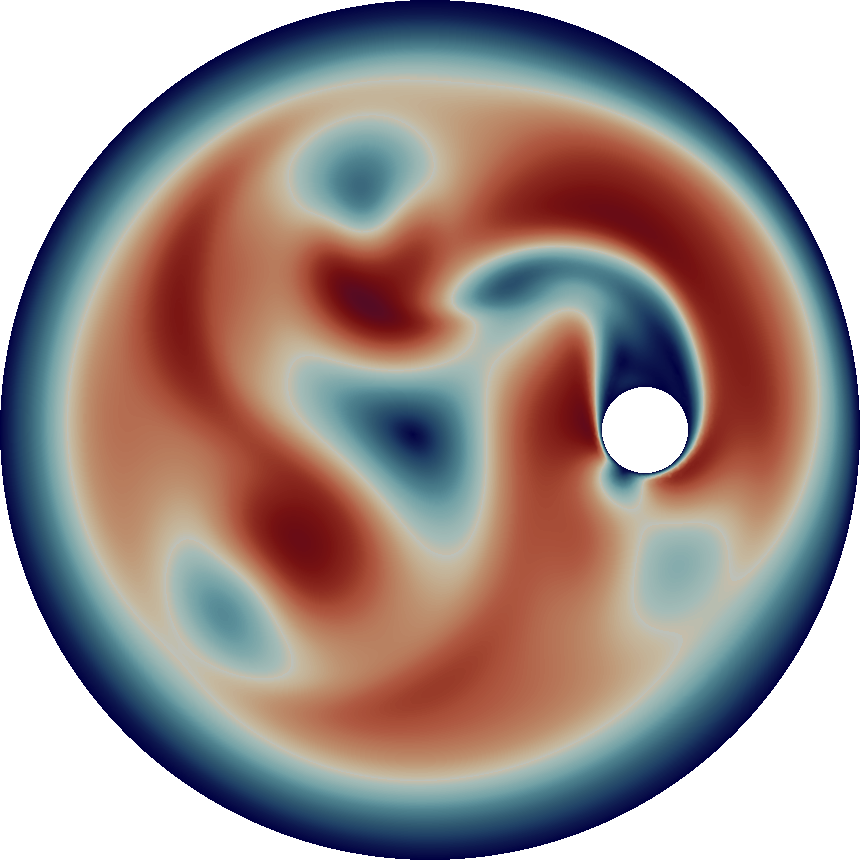} & \includegraphics[width = 1\linewidth]{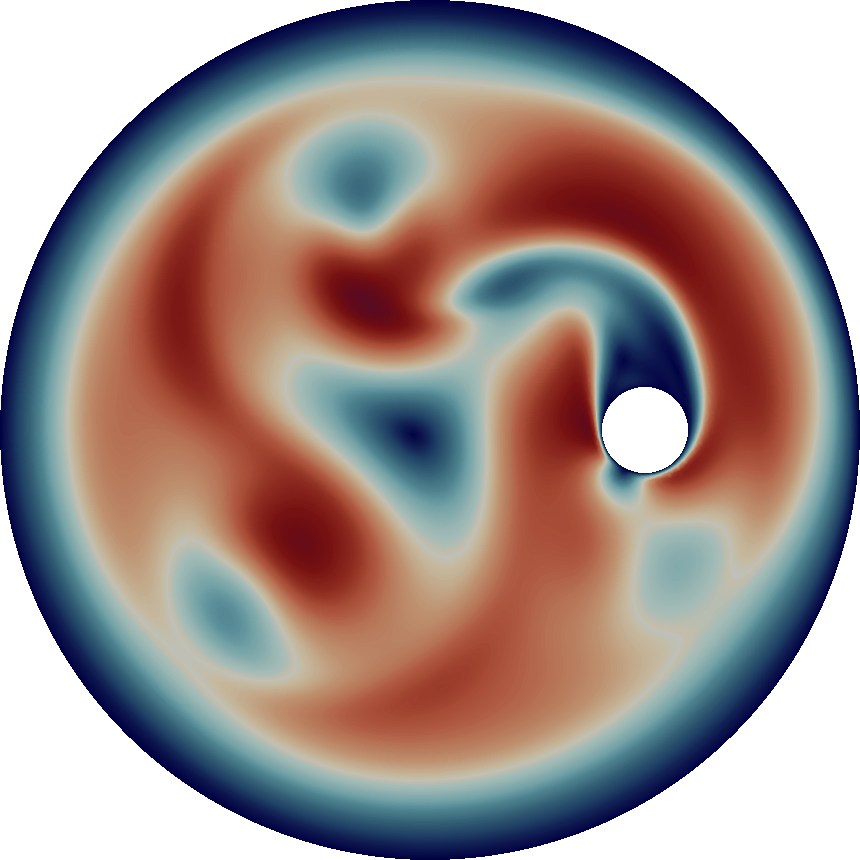} & \includegraphics[width = 1\linewidth]{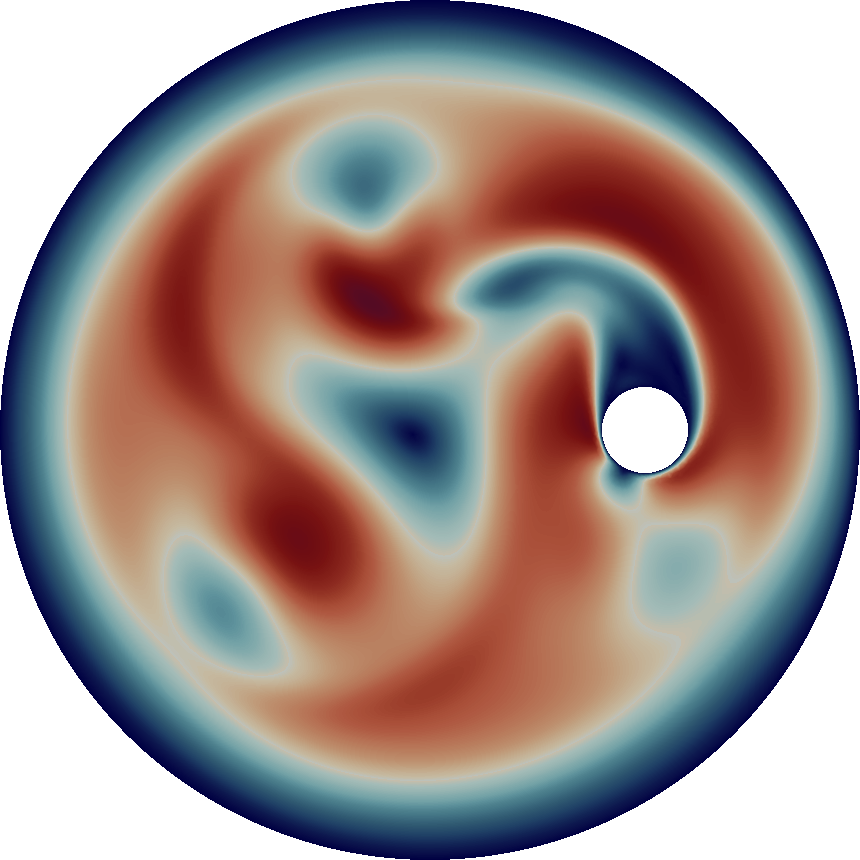} & \includegraphics[width = 1\linewidth]{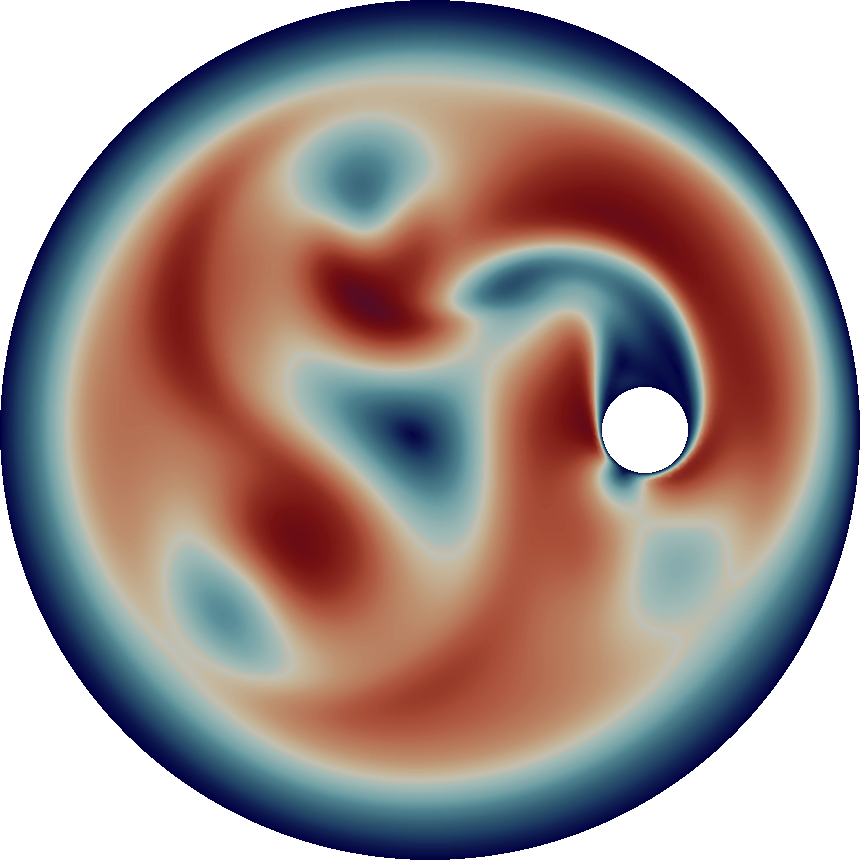} \\
            20 & \includegraphics[width = 1\linewidth]{figures/t-20.png} & \includegraphics[width = 1\linewidth]{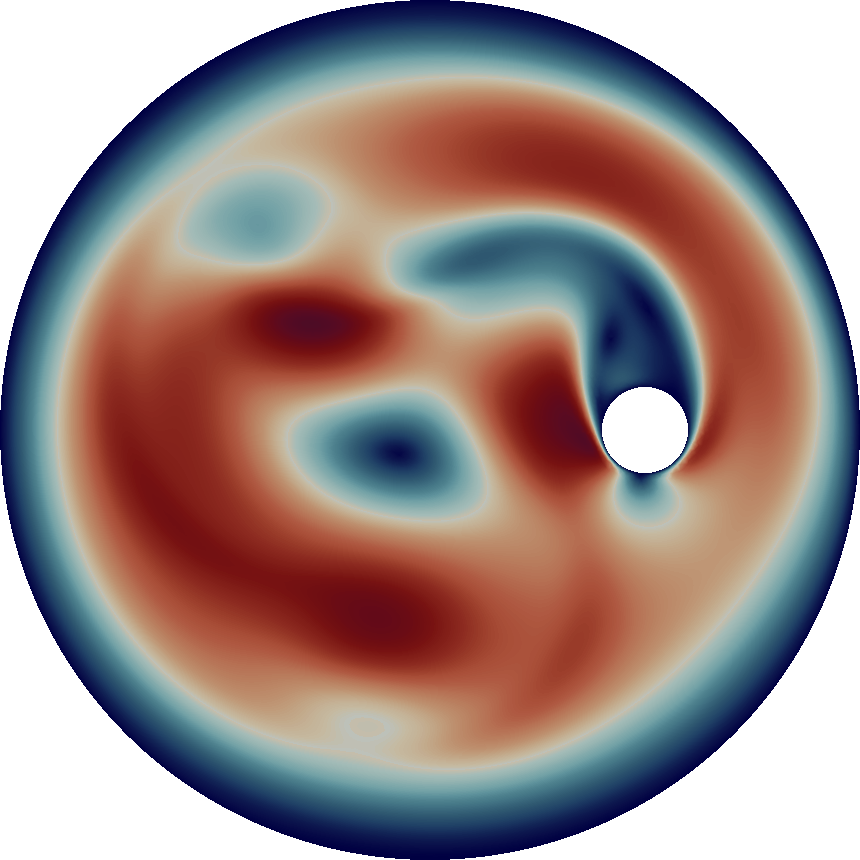} & \includegraphics[width = 1\linewidth]{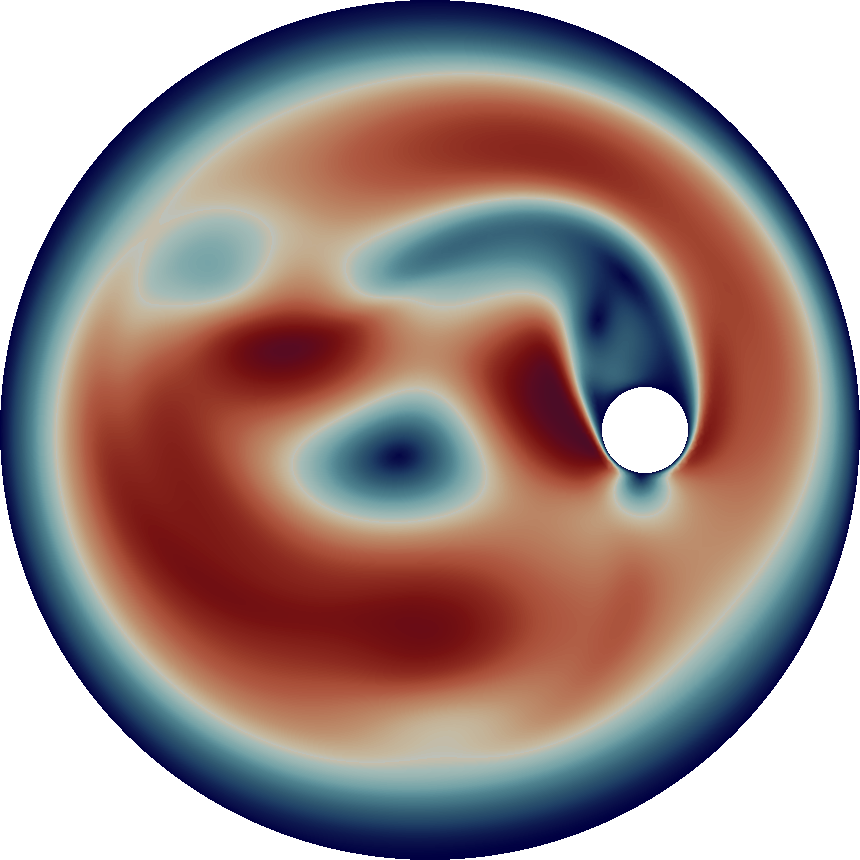} & \includegraphics[width = 1\linewidth]{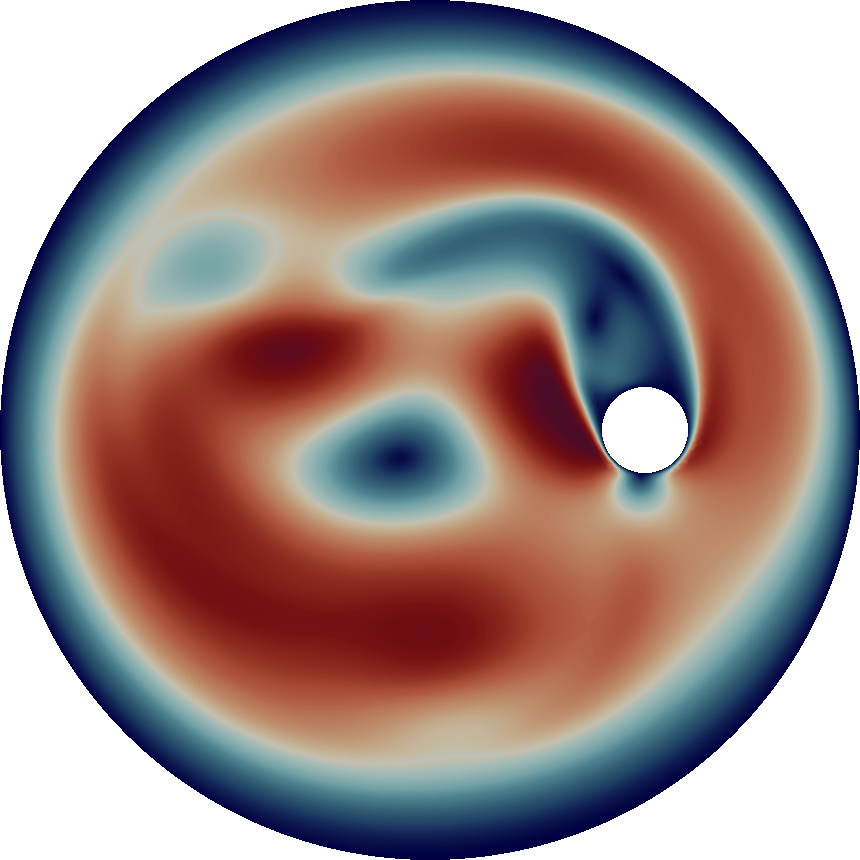} & \includegraphics[width = 1\linewidth]{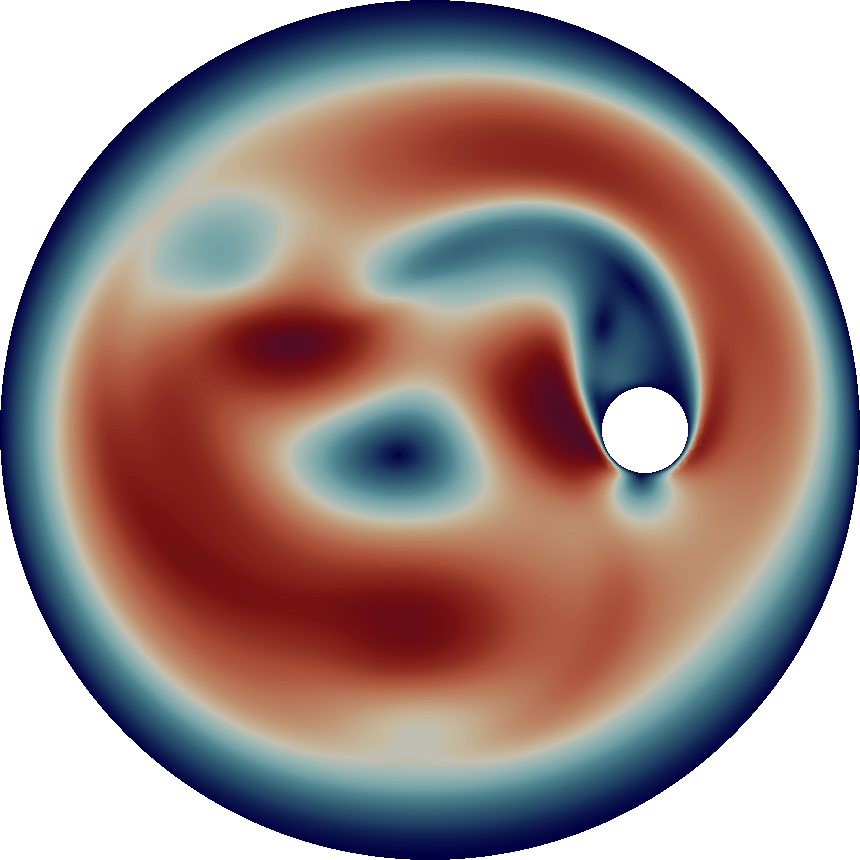} & \includegraphics[width = 1\linewidth]{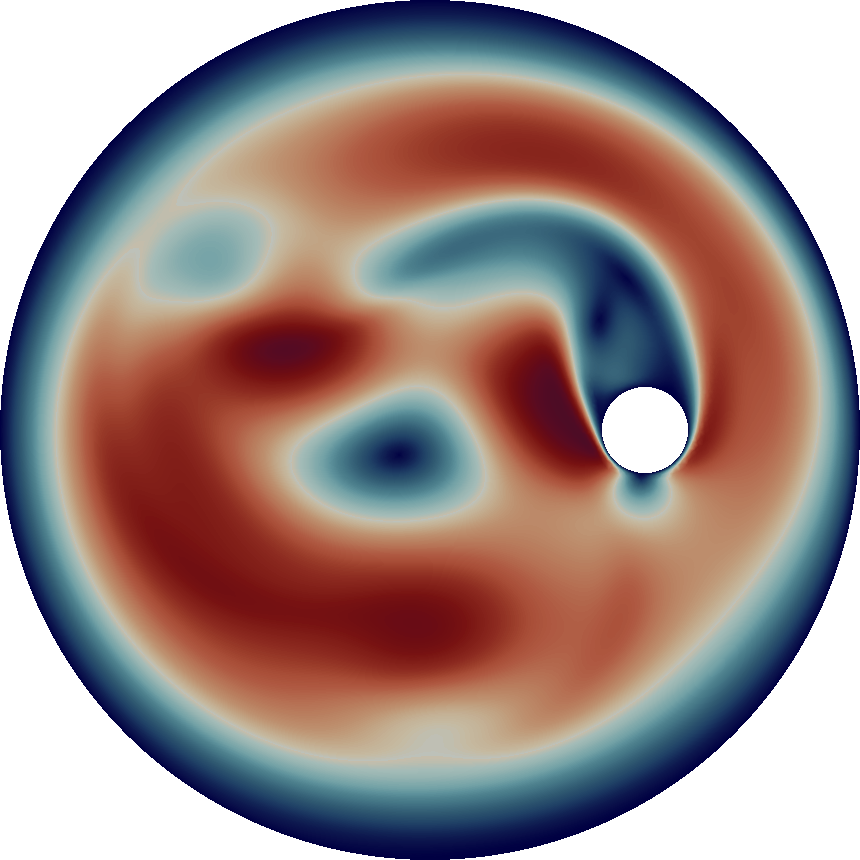} \\
            30 & \includegraphics[width = 1\linewidth]{figures/t-30.png} & \includegraphics[width = 1\linewidth]{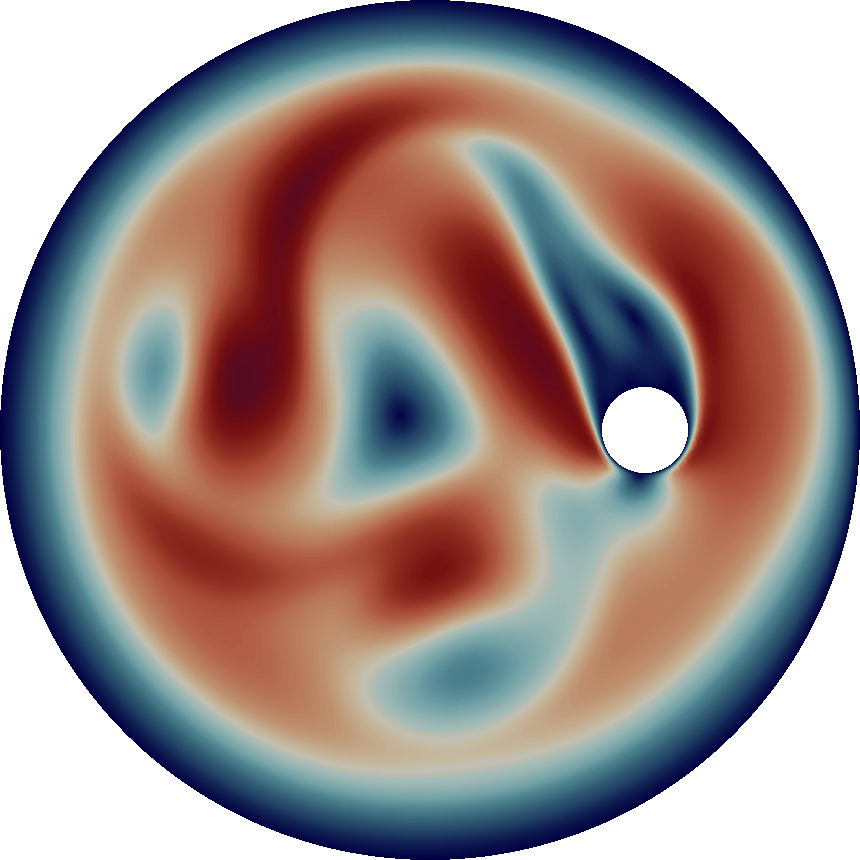} & \includegraphics[width = 1\linewidth]{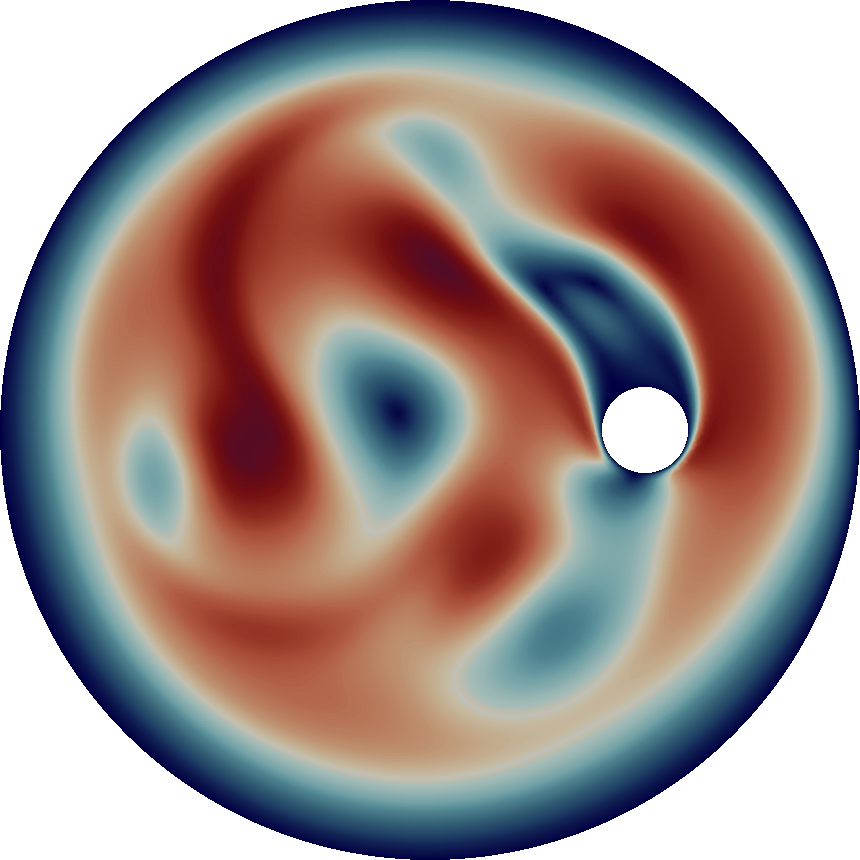} & \includegraphics[width = 1\linewidth]{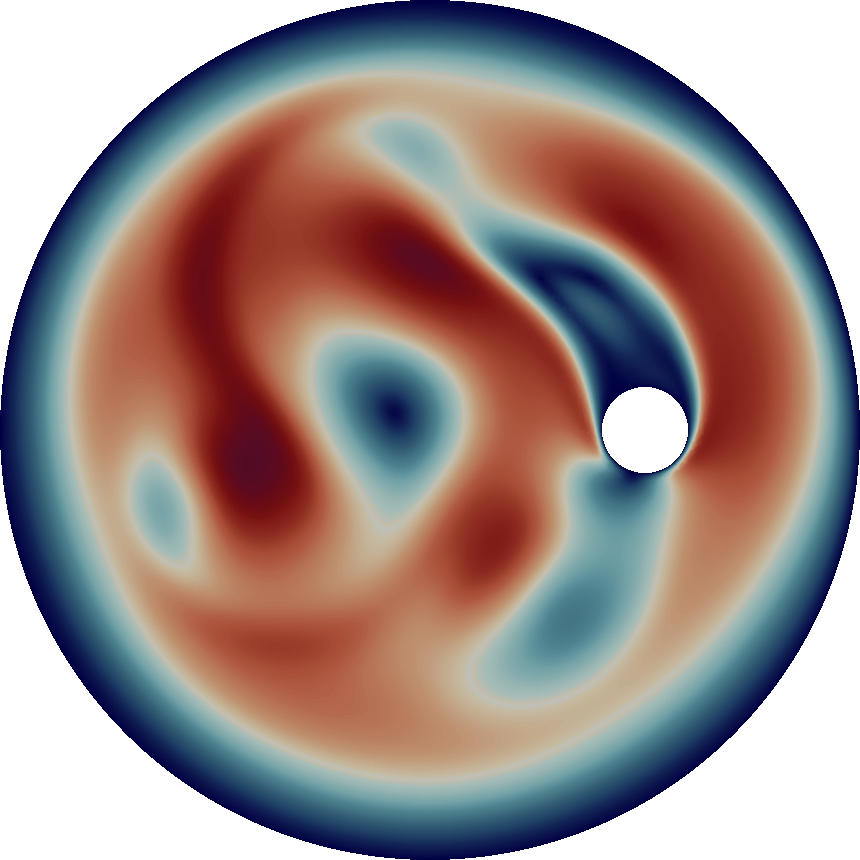} & \includegraphics[width = 1\linewidth]{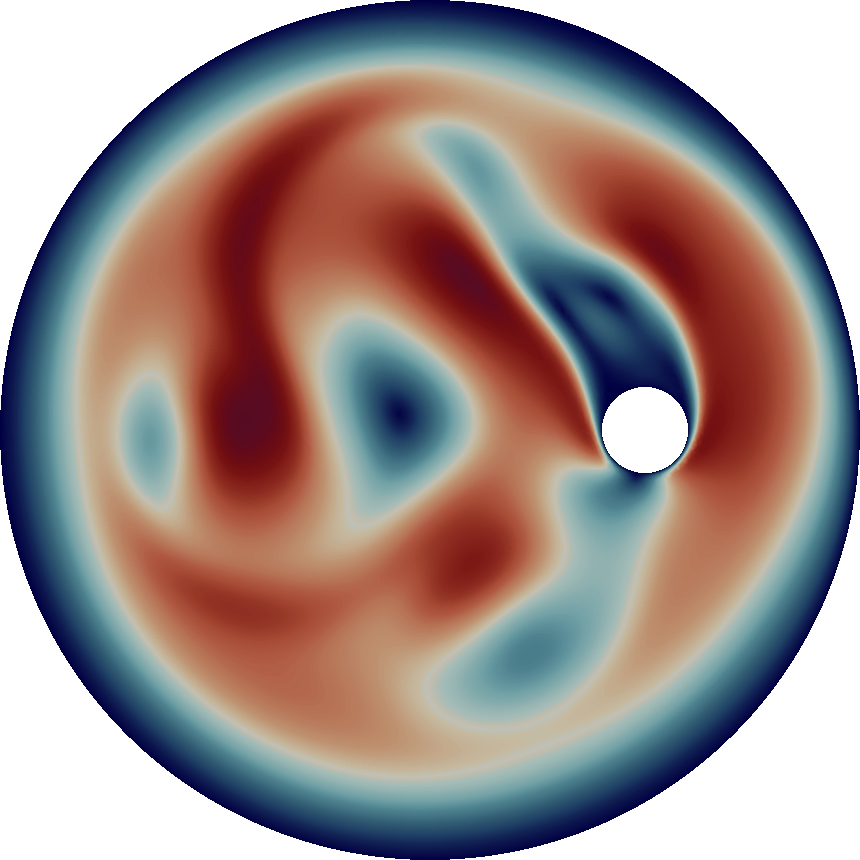} & \includegraphics[width = 1\linewidth]{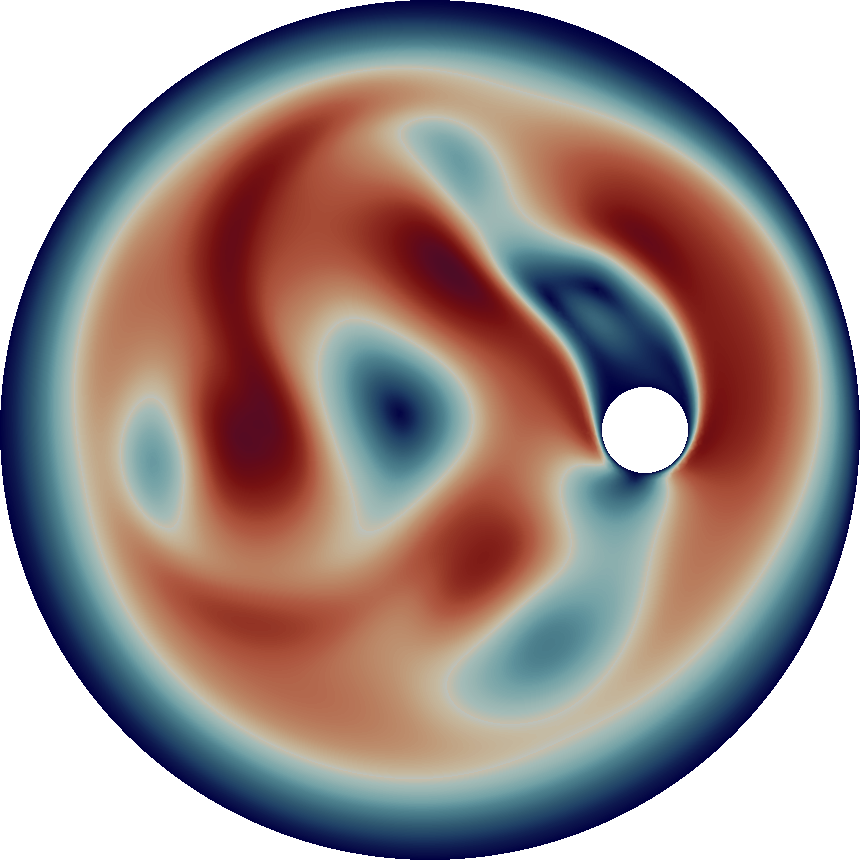}\\
            \bottomrule
        \end{tabular}
\caption{Snapshots of the BDF2{} and MP based methods with an effective $\Delta t =
0.0025$. Both of the third order methods show better agreement with the
reference solution than BDF2. For the MP based methods, the MP snapshots (not shown) were nearly identical to the MP-Pre-Post-2 snapshots. MP-Pre-Post-3 appears to most accurately capture the reference solution out of the methods in this table. The MP-Pre-Post-4 simulation failed before the first snapshot could be taken.}
\label{fig:bdf2_offset_cylinder}
\end{figure}

\section{Conclusions and future problems}

This work presented new time discretization methods with attractive properties and a new GLM framework for developing and 
analyzing time-filtered methods. Using the tools of GLM analysis
a number of interesting new methods have been derived and tested. 
These methods are easy to implement within existing codes, 
and can be used to improve the time accuracy of legacy codes.
The novel time-filtering methods have low cognitive complexity, 
and are in some cases optimized for other application-specific criteria.
Among these new methods, the error inhibiting method shows special
promise in tests even adjusting results for its added complexity.

In this work, we presented (mostly) a linear stability analysis and tested the methods
for a nonlinearity dominated application. The development of a general energy stability
theory of any new method herein would be a significant further advance. 
The new methods presented have an embedded structure, so that 
step and order adaptivity are natural  next steps. 
Our long term goal is to develop self-adaptive, variable step, variable order
methods that are of low cognitive complexity and easily implemented in
legacy codes based on these embedded families. 

\bigskip
\noindent{\bf Acknowledgements:}
The authors acknowledge support from AFOSR Grant No. FA9550-18-1-0383 (S.G.) and NSF Grant No. DMS1817542 (W.L.).

\appendix

\section{Order Conditions}
\label{app:OC} The order conditions are typically given in the form:

\begin{itemize}
\item[1.] Consistency conditions ($q=0$): 
\begin{equation*}
\tau_{0_1}= \Theta e_{k}- 1 \; \; \mbox{ and} \; \; \; \tau_{0_2} = \mathbf{%
\tilde{D}} e_k - e_{s+k-1}.
\end{equation*}

\item[2.] First order condition ($q=1$): 
\begin{equation*}
\tau_{1_1} = \mathbf{\tilde{b}} e_{s+k-1} + \Theta \ell - 1.
\end{equation*}

\item[3.] Second order condition ($q=2$): 
\begin{equation*}
\tau_{2_1} = \mathbf{\tilde{b}} \mathbf{c} + \frac{1}{2}\Theta \ell^2 - 
\frac{1}{2}.
\end{equation*}

\item[4.] Third order conditions ($q=3$): 
\begin{equation*}
\tau_{3_1} = \mathbf{\tilde{b}} \mathbf{c}^2 + \frac{1}{3}\Theta \ell^3 - 
\frac{1}{3} \; \; \; \mbox{and} \; \; \; \tau_{3_2} = \mathbf{\tilde{b}}^t 
\mathbf{\tilde{A}} \mathbf{c} +\frac{1}{2} \mathbf{\tilde{b}} \mathbf{\tilde{%
D}} \ell^2 + \frac{1}{6}\Theta \ell^3 - \frac{1}{6}.
\end{equation*}

\item[5.] Fourth order conditions ($q=4$): 
\begin{eqnarray*}
\tau _{4_{1}} &=&\mathbf{\tilde{b}}\mathbf{c}^{3}+\frac{1}{4}\Theta \ell
^{4}-\frac{1}{4},\\
\tau _{4_{2}} &=&\mathbf{\tilde{b}}\mathbf{\tilde{A}}c^{2}+\frac{1}{3}%
\mathbf{\tilde{b}}\mathbf{\tilde{D}}\ell ^{3}+\frac{1}{12}\Theta \ell ^{4}-%
\frac{1}{12},\\
\tau _{4_{3}}&=&\mathbf{\tilde{b}}\mathbf{\tilde{A}}%
\mathbf{\tilde{A}}\mathbf{c}+\frac{1}{2}\mathbf{\tilde{b}}\mathbf{\tilde{A}}%
\mathbf{\tilde{D}}\ell ^{2}+\frac{1}{6}\mathbf{\tilde{b}}\mathbf{\tilde{D}}%
\ell ^{3}+\frac{1}{24}\Theta \ell ^{4}-\frac{1}{24} \\
\tau _{4_{4}}&=&\mathbf{\tilde{b}}(\mathbf{c}\cdot \mathbf{%
\tilde{A}}\mathbf{c})+\frac{1}{2}\mathbf{\tilde{b}}(\mathbf{c}\cdot \mathbf{%
\tilde{D}}\ell ^{2})+\frac{1}{8}\Theta \ell ^{4}-\frac{1}{8}.
\end{eqnarray*}
\end{itemize}

\section{Energy stability Proof for the IE-Filt(d) methods}

The d-filtered family of methods for Implicit Euler methods IE-Filt(d) are given by
\begin{align*}
y^{(1)}& =du^{n-1}+(1-d)u^{n} \\
y^{(2)}& =y^{(1)}+\Delta t F(y^{(2)}) \\
u^{n+1}& =\frac{1}{3-2d}(2 y^{(2)}+2(1-d)u^{n}-u^{n-1})
\end{align*}%
We consider the energy at the  second stage and
rewrite $y^{(2)}$ in terms of $u^{n-1},u^{n}$ and $u^{n+1}$. 
\begin{equation*}
\langle y^{(2)},F(y^{(2)}\rangle \; =  \;
\langle y^{(2)},y^{(2)}-du^{n-1}+(1-d)u^{n}\rangle
\end{equation*}
where 
\[y^{(2)}=\frac{3-2d}{2}u^{n+1}+(d-1)u^{n}+\frac{1}{2}u^{n-1}.\]

If the operator F is dissipative then the inner product is negative, 
\[ \langle x,F(x) \rangle \; \; \; \leq  \; \; \; 0 \] 
and so we have
\[ \langle \frac{3-2d}{2}u^{n+1}+(d-1)u^{n}+\frac{1}{2}u^{n-1},\frac{3-2d}{2}
u^{n+1}+2(d-1)u^{n}+\frac{1-2d}{2}u^{n-1}\rangle \; \; \leq 0.\]
Expanding and re-arranging, we have
\begin{eqnarray} \label{energy1}
\frac{2d^{2}-7d+6}{4}\Vert u^{n+1}\Vert ^{2}-\frac{(2d-3)(d-1)}{2}%
\langle u^{n+1},u^{n}\rangle +\frac{2d^{2}-3d+2}{4}\Vert u^{n}\Vert ^{2} \nonumber  \\
 \leq \; \;  \frac{2d^{2}-7d+6}{4}\Vert u^{n}\Vert ^{2}-\frac{(2d-3)(d-1)}{2}
\langle u^{n},u^{n-1} \rangle \nonumber   \\
+ \;  \frac{2d^{2}-3d+2}{4} \Vert u^{n-1}\Vert ^{2} \; \; \; 
-\frac{(2d-3)(d-1)}{4}\Vert u^{n+1}-2u^{n}+u^{n-1}\Vert ^{2} . \; \; \;
\end{eqnarray}
\[ \mbox{Let} \; \; X^n = \left( \begin{array}{c}
 u^{n} \\
u^{n-1}\\
\end{array} \right) , \; \; 
\mbox{and so} \; \; \; 
X^{n+1} = \left( \begin{array}{c}
 u^{n+1} \\
u^{n} \\
\end{array} \right) ,\]
and define the G-norm, for any SPD matrix G:
\[ \Vert X\Vert _{G}^{2}=X^{t}GX. \]
In particular, we are interested in  the matrix 
\[ G=\frac{1}{4}
\begin{bmatrix}
2d^{2}-7d+6 & -(2d-3)(d-1) \\ 
-(2d-3)(d-1) & 2d^{2}-3d+2%
\end{bmatrix}\]
which is SPD for $d\leq \frac{3}{2}$.
Observe that eqn \eqref{energy1} becomes
\begin{equation*}
\Vert X^{n+1}\Vert _{G}^{2}\leq \Vert X^{n}\Vert _{G}^{2}-\frac{(2d-3)(d-1)}{%
4}\Vert u^{n+1}-2u^{n}+u^{n-1}\Vert ^{2}
\end{equation*}%
Clearly, then, the  d-filtered Implicit Euler method is energy stable for $0 \leq d \leq 1$. 

\section{Considerations for non-autonomous forces, time-dependent boundary conditions, and pressure recovery\label{sec:non-aut}}In this section, we list the abscissas and pressure recovery formulas for the methods developed herein.
While the methods were derived using autonomous theory for simplicity, the extension to non-autonomous ODEs is straightforward. The pre-processing steps can shift the abscissas of the data, so non-autonomous sources (such as $f(t)$ in \eqref{eq:NSE} or externally applied time dependent boundary conditions) must be adjusted accordingly.  

For the incompressible Navier-Stokes equations, pressure is not solved via an evolution equation, but the gradient of the pressure is a force in the evolution equation for velocity. Thus, the resulting pressure of the fully discrete methods are collocated at the same time as the non-autonomous forces. Some quantities of interest, such as lift and drag, require velocity and pressure approximations at the same time level, so pressure must be interpolated or extrapolated as necessary.
All the methods presented herein produce velocity approximations at time $t = t^{n+1}$ after post-processing. Thus, we explain how to derive a pressure approximation such that $p^{n+1} \approx p(t^{n+1})$.

Let $\tilde{p}^{n+1}$ denote the intermediate pressure from solving the coupled velocity-pressure system for $(u^{n+1},\tilde{p}^{n+1})$. Note that $\widetilde{p}^{n+1}$ is not necessarily an approximation at time level $t^{n+1}$.
Denote the abscissa for non-autonomous forces by $\tilde{t}$.

The formulas for IE, IE-Pre-2, IE-Pre-Post-3, IE-Filt(0), BDF2, BDF2-Post-3, and MP-Pre-Post-2/3/4   are
\begin{equation}\notag
\tilde{t} = t^n+ \Delta t,\quad p^{n+1} = \widetilde{p}^{n+1},
\end{equation}
the formulas for MP are
\begin{equation}\notag
\tilde{t} = t^n + \frac{1}{2}\Delta t,\quad p^{n+1} = \frac{3}{2}\widetilde{p}^{n+1} - \frac{1}{2}\widetilde{p}^{n}.
\end{equation}
the formulas for IE-Filt($d$) are
\begin{equation}\notag
\tilde{t} = t^n + (1-d)\Delta t,\quad  p^{n+1} = (1+d)\widetilde{p}^{n+1} - d\widetilde{p}^{n},
\end{equation}
and the formulas for BDF2-Pre-Post-3 are 
\begin{gather*}\notag
\tilde{t} = t^n + c \, \Delta t, \quad
p^{n+1} = a_3\widetilde{p}^{n+1} +a_2\tilde{p}^{n} + a_1 \widetilde{p}^{n-1}, \\
c = 3.930023404911324, \quad 
\begin{bmatrix}
a_1\\
a_2\\
a_3
\end{bmatrix}
=
\begin{bmatrix}
2.827506874208412\\
- 2.7249903435055\\
0.8974834692970881
\end{bmatrix}.
\end{gather*}
IE-EIS-3 contains two implicit Euler solves per timestep and yields pressure approximations at both $t^{n+2/3}$ and $t^{n+1}$. There is a pair $(\tilde{t}^{(2)}, \widetilde{p}^{(2)})$ associated with the intermediate solve in \eqref{eqn:eis3-solve-1}, and a pair $(\tilde{t}, \widetilde{p}^{n+1})$ associated with the final solve in \eqref{eqn:eis3-solve-2}. The formulas are simple;
\begin{equation}\notag
\tilde{t}^{(2)} = t^n + \frac{2}{3}\Delta t, \quad p^{n+2/3} = \widetilde{p}^{(2)}, \quad \tilde{t} = t^{n+1}, \quad p^{n+1} = \widetilde{p}^{n+1}.
\end{equation}


\begin{thebibliography}{ketcheson}
\bibitem[A72]{A72} \textsc{R.A. Asselin}, \emph{Frequency filter for time
integration}, Mon. Weather Review 100(1972), 487-490.

\bibitem[Al15]{Al15} \textsc{M. Aln{\ae }s, J. Blechta, J. Hake, A.
Johansson, B. Kehlet, A. Logg, C. Richardson, J. Ring, M.E. Rognes, G.N.
Wells}, \emph{The FEniCS project version 1.5}, Archive of Numerical Software
3 (2015), 9--23.

\bibitem[B76]{B76} \textsc{G.A. Baker}, \emph{Galerkin approximations for
the Navier-Stokes equations}, Technical Report,1976.

\bibitem[BDK82]{BDK82} \textsc{G.A. Baker, V.A. Dougalis, O.A. Karakashian}, 
\emph{On a higher order accurate fully discrete Galerkin approximation to
the Navier-Stokes equations}, Math of Comp. 39 (1982), 339-375.

\bibitem[B66]{Butcher1966} \textsc{J.C. Butcher} \emph{A multistep
generalization of Runge-Kutta methods with four or five stages}, J. Assoc.
Comput. Mach. 14 (1967), 84-89.

\bibitem[B06]{ButcherGLM} \textsc{J.C. Butcher} \emph{General linear methods}, 
Acta Numerica 15 (2006), 157--256

%\bibitem[C68]{chorin1968numerical} \textsc{A. J. Chorin}, \emph{Numerical
%solution of the {N}avier-{S}tokes equations}, Mathematics of Computation 22
%(1968), 745--762.

%\bibitem[C69]{chorin1969convergence} \textsc{A. J. Chorin}, \emph{On the
%convergence of discrete approximations to the {N}avier-{S}tokes equations},
%Mathematics of Computation 23 (1969), 341--353.

\bibitem[CR78]{Crouzeix1976} \textsc{M. Crouzeix and P.A. Raviart, }1978. 
\emph{Approximation d'\'{e}quations d'\'{e}volution lin\'{e}aires par des m%
\'{e}thodes multipas}. \'{E}tude num\'{e}rique des grands systemes. Proc.
Sympos., Novosibirsk, 1976, M\'{e}thodes Math. Inform, 7, pp.133-150.

\bibitem[D78]{D78} \textsc{G. Dahlquist}, \emph{Positive functions and some
applications to stability questions for numerical methods}, 1-29 in: Recent
advances in numerical analysis, (editors: C. de Boor and G. Golub) Academic
Press, 1978.

\bibitem[D79]{D79} \textsc{G. Dahlquist}, \emph{Some properties of linear
multistep and one-leg methods for ordinary differential equations,}
Conference Proceeding, 1979 SIGNUM Meeting on Numerical ODE's, Champaign,
Ill., available at: \textit{%
http://cds.cern.ch/record/1069163/files/CM-P00069449.pdf }.

\bibitem[DGLL18]{DGLL18} \textsc{V. DeCaria, A. Guzel W. Layton and Yi Li, }%
\emph{A new embedded variable stepsize, variable order family of low
computational complexity,}\ https://arxiv.org/abs/1810.06670, 2018.

\bibitem[DLZ18]{DLZ18} \textsc{V. DeCaria, W. Layton and Haiyun Zhao, }\emph{%
Analysis of a low complexity, time-accurate discretization of the
Navier-Stokes equations}, IJNAM, 17 (2020), 254-280.

\bibitem[EIS20]{EISpp20}
{\textsc A Ditkowski, S Gottlieb, Z. J. Grant}
\emph{Explicit and implicit error inhibiting schemes with post-processing}
Computers \& Fluids, {\bf 208}(2020).


\bibitem[D13]{D13} \textsc{D.R. Durran},\emph{\ Numerical methods for wave
equations in geophysical fluid dynamics}, Vol. 32. Springer Science \&
Business Media, 2013.

\bibitem[EP00]{EP00} \textsc{C. Egbers and G. Pfister, }\emph{Physics of
rotating fluids}, Springer LN in Physics 549 (2018).

%\bibitem[B98]{B98} \textsc{J. Becker}, \emph{A second order backward
%difference method with variable steps for a parabolic problem}, BIT
%38(1998)644-662.

%\bibitem[CGG90]{CGG90} M. Calvo , T. Grande and R. D. Grigorieff: On the
%zero-stability of the variable order variable step-size BDF-formulas, Numer.
%Math. 57, 39-50 (1990)

%\bibitem[C80]{C80} \textsc{J.R. Cash}, \emph{On the integration of stiff
%systems of ODEs using extended backward differentiation formula}, Numer.
%Math. 34(1980) 235-246.

\bibitem[E04a]{E04a} \textsc{E. Emmrich}, \emph{Error of the two-step BDF
for the incompressible Navier-Stokes}, ESAIM: M2AN, 38 (2004), 757-764.

\bibitem[E04b]{E04b} \textsc{E. Emmrich}, \emph{Stability and convergence of
the two-step BDF for the incompressible Navier-Stokes Problem},
International Journal of Nonlinear Sciences and Numerical Simulation, 5
(2004), 199-209.

%\bibitem[E05]{E05} \textsc{E. Emmrich}, \emph{Stability and error of the
%variable two-step BDF for semilinear parabolic problems}, J. Appl. Math. \&
%Computing 19(2005) 33-55.

\bibitem[G79]{GR79} \textsc{V. Girault and P.A. Raviart}, \emph{Finite
element approximation of the Navier-Stokes equations}, Berlin,
Springer-Verlag, Lecture Notes in Mathematics, vol. 749, 1979.

%\bibitem[GH10]{GH10} \textsc{D.F. Griffiths and D.J. Higham}, \emph{%
%Numerical Methods for Ordinary Differential Equations}, Springer, London,
%2010.

\bibitem[G83]{G83} \textsc{R.D. Grigorieff,}\emph{\ Stability of multi-step
methods on variable grids}, Numer. Math. 42(1983) 359-377.

\bibitem{HairerWanner1987} \textsc{\ E Hairer, SP Norsett, and G Wanner}, 
\emph{Solving Ordinary Differential Equations I}, Berlin,  Springer Series
in Computational Mathematics, 1987.

%\bibitem[HWN93]{HWN93} \textsc{E. Hairer, G. Wanner and S.P. Norsett}, \emph{%
%Solving ordinary differential equations, I, Nonstiff Problems}, Springer,
%Berlin, 1991.

%\bibitem[HV03]{HV03} \textsc{W. Hundsdorfer and J. Verwer}, \emph{Numerical
%solution of time dependent advection diffusion reaction equations},
%Springer, Berlin, 2003.

\bibitem[GL18]{GL18} \textsc{A. Guzel and W. Layton,} \emph{Time filters
increase accuracy of the fully implicit method}, BIT Numerical Mathematics,
58 (2018), 301-315.

\bibitem[HEPG15]{HEPG15} \textsc{A. Hay, S. Etienne, D. Pelletier and A.
Garon}, \emph{hp-Adaptive time integration based on the BDF for viscous flows%
}. JCP, 291 (2015), 151-176.

%\bibitem[K86]{K86} \textsc{J. Van Kan}, \emph{A second-order accurate
%pressure-correction scheme for viscous incompressible flow}, SIAM J. Sci.
%Computing 7(1986), 870--891.

\bibitem[J15]{J15} \textsc{N Jiang}, \emph{Higher Order Ensemble Simulation
Algorithm for Fluid Flows}, Journal of Scientific Computing 64 (2015),
264-288.

\bibitem[J04]{J04} \textsc{V. John}, \emph{Reference values for drag and
lift of a two-dimensional time-dependent flow around a cylinder.}
International Journal for Numerical Methods in Fluids, 44(2004), 777--788.

\bibitem[JR10]{JR10} \textsc{V. John, J. Rang,} \emph{Adaptive time step
control for the incompressible Navier-Stokes equations.} Computer Methods in
Applied Mechanics and Engineering, 199, 514--524, 2010.

\bibitem[JMRT17]{JMRT17} \textsc{N. Jiang, M. Mohebujjaman, L. G. Rebholz,
and C. Trenchea}, \emph{An optimally accurate discrete regularization for
second order timestepping methods for Navier--Stokes equations}, Computer
Methods in Applied Mechanics and Engineering 310 (2016): 388-405.

\bibitem[KGGS10]{KGGS10} \textsc{D.A. Kay, P.M. Gresho, P.M., Griffiths and
D.J. Silvester}, \emph{Adaptive time-stepping for incompressible flow Part
II: Navier--Stokes equations.} SIAM Journal on Scientific Computing,
32(2010), 111-128.

\bibitem[K03]{K03} \textsc{E. Kalnay}, \emph{Atmospheric Modeling, data
assimilation and predictability}, Cambridge Univ. Press, Cambridge, 2003.

\bibitem[DK08]{ketcheson} \textsc{D.I. Ketcheson}, \emph{Highly
efficient strong stability-preserving Runge--Kutta methods 
with low-storage implementations}, SIAM Journal on Scientific
Computing 30 (4), 2113-2136.

%\bibitem[LT12]{LT12} \textsc{W. Layton and C. Trenchea}, \emph{Stability of
%two IMEX methods, CNLF and BDF2-AB2, for uncoupling systems of evolution
%equations,} Applied Numerical Mathematics 62 (2012),112-120.

\bibitem[LLT16]{LLT16} \textsc{W. Layton, Y. Li, and C. Trenchea}, \emph{%
Recent developments in IMEX methods with time filters for systems of
evolution equations}, J. Comp. Applied Math. 299 (2016), 50--67.

\bibitem[LT15]{LT15} \textsc{Y. Li and C. Trenchea}, \emph{Analysis of time
filters used with the leapfrog scheme}, technical report, 2015, available
at: \textit{http://www.mathematics.pitt.edu/research/technical-reports}.

%\bibitem[LO76]{Lo76} \textsc{W. Liniger and F. Odeh}, \emph{On Liapunov
%stability of stiff nonlinear multistep difference equations}, Report RC5900,
%IBM Research Center, Yorktown Heights, NY 1976.

%\bibitem[LW76]{LW76} \textsc{W. Liniger and R.A. Willoughby}, \emph{%
%Efficient numerical integration of stiff systems of ordinary differential
%equations}, Report RC-1970, Thomas J Watson Research Center, New York, 1976.

\bibitem[N17]{N17} \textsc{L. Najman}, \emph{Modern approaches to discrete
curvature}, Lecture Notes in Mathematics, Springer, Berlin, 2017.

\bibitem[N84]{N84} \textsc{O. Nevanlinna}, \emph{Some remarks on variable
step integration}, Z. Angew. Math. Mech. 64(1984)315-316.

\bibitem[R69]{R69} \textsc{A. Robert}, \emph{The integration of a spectral
model of the atmosphere by the implicit method}, Proc. WMO/IUGG Symposium on
NWP, Japan Meteorological Soc. , Tokyo, Japan, pp. 19-24, 1969.

\bibitem[S10]{S10} \textsc{M. Sussman}, \emph{A stability example},
technical report, 2010, available at: \\
\textit{http://www.mathematics.pitt.edu/sites/default/files/research-pdfs/stability.pdf%
}.

\bibitem[ST96]{ST96} \textsc{M. Sch{a}fer and S. Turek} \emph{Benchmark
computations of laminar flow around a cylinder}, Flow Simulation with
High-Performance Computers II. Notes on Numerical Fluid Mechanics, 48
(1996), 547--566.

%\bibitem[W09]{W09} \textsc{P.D. Williams}, \emph{A proposed modification to
%the Robert--Asselin time filter}, Monthly Weather Review 137.8 (2009):
%2538-2546.

\bibitem[VV13]{VV13} \textsc{A. Veneziani and U. Villa}, \emph{ALADINS: An
algebraic splitting time-adaptive solver for the incompressible
Navier-Stokes equations}, JCP 238(2013) 359-375.

\bibitem[W11]{W11} \textsc{P.D. Williams}, \emph{The RAW Filter: An
Improvement to the Robert--Asselin Filter in Semi-Implicit Integrations},
Mon. Weather Rev., 139 (2011) 1996--2007.

\bibitem[W13]{W13} \textsc{P.D. Williams}, \emph{Achieving seventh-order
amplitude accuracy in leapfrog integration,} Monthly Weather Review 141.9
(2013), 3037-3051.

%\bibitem[O71]{oskolkov1971} \textsc{A. Oskolkov}, \emph{On a quasi-linear
%parabolic system with a small parameter approximating the {N}avier-{S}tokes
%system}, Zapiski Nauchnykh Seminarov POMI 21 (1971), 79--103.

%\bibitem[T69II]{temam1969approximation2} \textsc{R. Temam}, \emph{Sur
%l'approximation de la solution des \'equations de {N}avier-{S}tokes par la
%m\'ethode des pas fractionnaires (II)}, Arch. Ration. Mech. Anal. 33 (1969),
%377--385.

\bibitem[W09]{W09} \textsc{P.D. Williams}, \emph{A proposed modification to
the Robert-Asselin time filter}, Monthly Weather Review, 137(2009),
2538-2546.
\end{thebibliography}
\end{document}
\grid